\documentclass[12pt,a4paper]{article}

\usepackage{amsfonts,amsmath,amssymb,amsthm,amsopn}
\usepackage{graphicx}
\usepackage{algorithmic}
\usepackage[numbers]{natbib}
\usepackage{hyperref}
\hypersetup{hidelinks}
\usepackage{bm,color}
\usepackage{stmaryrd}
\usepackage[T1]{fontenc}
\usepackage{float, caption, subcaption}
\usepackage{booktabs}
\usepackage{multirow}
\usepackage{siunitx}
\usepackage{enumitem}
\usepackage{lscape}
\usepackage[section]{placeins}

\newcommand\bF{\bm{F}}
\newcommand\bq{\bm{q}}

\newcommand\bu{\bm{u}}
\newcommand\bU{\bm{U}}
\newcommand\bx{\bm{x}}

\newcommand\bK{\bm{K}}
\newcommand\bW{\bm{W}}
\newcommand\bZ{\bm{Z}}
\newcommand\bS{\bm{S}}

\newcommand\btau{\bm{\tau}}

\newcommand\dd{\mathrm{d}}
\newcommand\pd[2]{\frac{\partial {#1}}{\partial {#2}}}

\newcommand\abs[1]{\lvert #1 \rvert}
\newcommand\norm[1]{\left\lVert #1 \right\rVert}

\newcommand\xr{i+\frac12}
\newcommand\xl{i-\frac12}
\newcommand\yr{j+\frac12}
\newcommand\yl{j-\frac12}

\allowdisplaybreaks
\theoremstyle{plain}\newtheorem{definition}{Definition}[section]

\newtheorem{theorem}{Theorem}[section]

\theoremstyle{definition}
\newtheorem{example}{Example}[section]
\newtheorem{proposition}{Proposition}[section]
\newtheorem{remark}{Remark}[section]
\newtheorem{lemma}{Lemma}[section]

\title{A compact high-order and positivity-preserving active flux method for compressible Navier--Stokes equations on Cartesian meshes}
\author{
Junming Duan\thanks{Corresponding author. School of Science and Engineering, The Chinese University of Hong Kong (Shenzhen), Guangdong, 518172, P.R. China. Email: \href{mailto:duanjunming@cuhk.edu.cn}{\texttt{duanjunming@cuhk.edu.cn}}.} \and
Wasilij Barsukow\thanks{Institut de Math\'ematiques de Bordeaux (IMB), CNRS UMR 5251, University of Bordeaux, Talence, 33405, France. Email: \href{mailto:wasilij.barsukow@math.u-bordeaux.fr}{\texttt{wasilij.barsukow@math.u-bordeaux.fr}}.} \and
Christian Klingenberg\thanks{Institute of Mathematics, University of W\"urzburg, Emil-Fischer-Stra\ss e 40, W\"urzburg, 97074, Germany. Email: \href{mailto:christian.klingenberg@uni-wuerzburg.de}{\texttt{christian.klingenberg@uni-wuerzburg.de}}.}
}
\date{}

\begin{document}
\maketitle

\begin{abstract}
This paper develops a compact fourth-order positivity-preserving active flux (AF) method for the one- and two-dimensional compressible Navier--Stokes equations on Cartesian meshes.
	The method retains the cell averages and shared point values of the standard third-order AF method as its degrees of freedom.
To avoid the order reduction that can arise when diffusion is discretized using operators from the standard third-order AF method, while maintaining compactness, the divergence of the viscous flux is discretized directly using compact fourth-order operators.
	For the inviscid part, incorporating a downwind point value into the biased stencil yields fourth-order accuracy.
	A monolithic flux limiting blends high-order total numerical fluxes with low-order positivity-preserving counterparts, treating the inviscid and viscous fluxes jointly while maintaining local conservation.
	Together with a scaling limiter for point values, this procedure preserves density and pressure positivity for both cell averages and point values.
	Numerical experiments demonstrate fourth-order convergence, positivity preservation, and accurate resolution of shocks and viscous flow structures.
	For the two-dimensional viscous shock tube, a comparison with a discontinuous Galerkin method shows improved computational efficiency in resolving complex interaction of shock waves and boundary layers.
\end{abstract}

\noindent\textbf{Keywords: }Active flux;  Navier--Stokes equations;  high-order accuracy;  finite volume method;  compact scheme;  positivity-preserving

\section{Introduction}\label{sec:introduction}
Computational fluid dynamics is central to weather prediction, pollutant transport, biomedical applications, and aerodynamic design \cite{Slotnick_2014_CFD},
with accurate modeling of viscous effects essential for resolving boundary layers and predicting aerodynamic drag.
The development of high-order numerical methods,
which can provide enhanced accuracy and computational efficiency \cite{Wang_2013_High‐order_IJNMF},
is therefore of fundamental importance in both scientific research and engineering practice.
In this work, we focus on the development of a compact high-order numerical method for solving the $d$-dimensional compressible Navier--Stokes (NS) equations
\begin{equation}\label{eq:ns}
	\partial_t \bU + \sum_{\ell=1}^{d} \partial_{x_\ell}
	\left(\bF_{\ell}^c(\bU)-\bF_{\ell}^v(\bU,\nabla\bU)\right)=\bm 0,
\end{equation}
where $\bU = (\rho, \rho\bu, E)^\top$ denotes the vector of conservative variables,
and $\bF_{\ell}^c$ and $\bF_{\ell}^v$ denote, respectively, the inviscid and viscous fluxes in the $x_\ell$-direction,
\begin{equation}\label{eq:ns_directional_fluxes}
	\bF_\ell^c=
	\begin{pmatrix}
		\rho u_\ell \\
		\rho\bu u_\ell+p\bm e_\ell \\
		(E+p)u_\ell
	\end{pmatrix},
	\quad
	\bF_\ell^v=
	\begin{pmatrix}
		0\\
		\btau\bm e_\ell \\
		\bu\mathbin{\cdot}(\btau\bm e_\ell)-q_\ell
	\end{pmatrix},
	\quad q_\ell=\bq\mathbin{\cdot}\bm e_\ell.
\end{equation}
Here, $\rho$ is the density, $\bu = (u_1, \dots, u_d)^\top$ is the velocity vector, $p$ is the pressure, and $E = \rho e+\frac12\rho\norm{\bu}^2$ is the total energy, with $e$ the specific internal energy.
The vector $\bm e_\ell$ is the unit vector with all zeros except for the $\ell$th component.
For a Newtonian fluid satisfying Fourier's law of heat conduction,
the viscous stress tensor and Fourier heat flux are given as follows,
\begin{equation}\label{eq:newton_fourier}
	\btau=\mu\left(\nabla\bu+\nabla\bu^{\top}
	-\frac{2}{3}(\nabla\mathbin{\cdot}\bu)\bm I_d\right),
	\quad
	\bq=-\kappa\nabla T,
\end{equation}
with $\mu>0$ the constant dynamic viscosity.
In the nondimensional form, the temperature is $T = p/\rho$ and the thermal conductivity is $\kappa=\frac{\gamma\mu}{(\gamma-1)\Pr}$,
where $\Pr>0$ is the Prandtl number,
which characterizes the relative importance of momentum and thermal diffusion.
To close the system \eqref{eq:ns}, the ideal-gas equation of state $p=(\gamma-1)\rho e$ is used,
with $\gamma>1$ the ratio of specific heats.

A variety of high-order accurate numerical methods have been developed for the compressible NS equations,
including compact finite difference methods \cite{Lele_1992_Compact_JCP},
$P_NP_M$ methods \cite{Dumbser_2010_Arbitrary_CF},
discontinuous Galerkin (DG) methods \cite{Bassi_1997_high_JCP},
and flux reconstruction (FR) methods \cite{Huynh_2007_Flux_InProceedings,Vincent_2011_New_JSC},
see also the review \cite{Wang_2013_High‐order_IJNMF}.
High-order methods can provide high resolution for smooth and multiscale flow structures on relatively coarse meshes.
For large-scale simulations,
the locality and compactness of the discretization are also important,
since localized data dependencies reduce communication and facilitate parallel implementation on modern high-performance computing architectures,
which is one of the reasons why DG methods are attractive.
However,
the number of local degrees of freedom (DoFs) in DG methods grows rapidly with the polynomial degree.
For example,
a tensor-product polynomial space of degree $k$ in each coordinate direction contains $(k+1)^d$ coefficients per conservative variable in $d$ dimensions,
leading to quadratic and cubic growth in memory requirements and computational costs in 2D and 3D, respectively.
These considerations motivate the development of compact high-order methods with fewer DoFs.

The active flux (AF) method provides a compact finite volume framework,
with fewer DoFs than DG methods.
Although its number of DoFs still grows quadratically (cubically) with the polynomial degree,
the comparison in \cite{Barsukow_2026_equivalence} shows higher computational efficiency.
Originally developed for hyperbolic conservation laws
\cite{Eymann_2011_Active_InProceedings,Eymann_2011_Active_InProceedingsa,Eymann_2013_Multidimensional_InProceedings},
inspired by \cite{VanLeer_1977_Towards_JCP},
AF methods evolve cell averages together with point values shared by neighboring cells,
which reduces the number of DoFs.
The cell average follows a conservative finite volume update,
and the point value provides additional interface information for high-order approximations.
As described below, the AF method is local because point values are evolved using compact finite difference operators, without quadrature or inverting a mass matrix.	
All these factors contribute to the efficiency of the AF methods.

In the original fully-discrete formulation \cite{Eymann_2011_Active_InProceedings,Eymann_2011_Active_InProceedingsa,Eymann_2013_Multidimensional_InProceedings},
the point values are evolved by exact or approximate evolution operators,
which are difficult to construct for general cases,
especially to achieve high-order accuracy \cite{Helzel_2019_new_JSC,Chudzik_2024_Active_JSC,Barsukow_2026_active_JSC,Chudzik_2026_fully_SJSC}.
A method-of-lines formulation was subsequently introduced in
\cite{Abgrall_2023_combination_CAMC},
where the evolution equations for both cell averages and point values are first discretized in space and then integrated in time using Runge--Kutta (RK) methods.
Upwinding is incorporated in the point value update using Jacobian splitting \cite{Abgrall_2023_combination_CAMC} or flux-vector splitting \cite{Duan_2025_Active_SJSC},
which has been extended to higher-order accuracy
\cite{Abgrall_2023_Extensions_EMMNA,Barsukow_2026_generalized_CF}, polygonal meshes \cite{Abgrall_2026_Virtual_JCP}, and magnetohydrodynamics \cite{Duan_2025_Active,Liu_2026_active_JCP}.

Compared with the hyperbolic case, AF discretizations of diffusion terms are relatively less developed.
One approach reformulates the diffusion terms as a first-order hyperbolic or relaxation system with auxiliary variables
\cite{Nishikawa_2016_Third_CF,Duan_2025_asymptotic}.
The pseudo-time formulation in \cite{Nishikawa_2016_Third_CF} uses inner iterations, whereas \cite{Duan_2025_asymptotic} uses a diagonally implicit RK method.
Another discretizes the diffusion terms directly using auxiliary gradient approximations \cite{He_2021_Towards}, but it is second-order accurate.
In \cite{Duan_2026_compact}, a compact fourth-order AF method is constructed for solving convection--diffusion equations with degenerate diffusion coefficients,
where the diffusion term is discretized based on a rewriting into a doubly conservative form of the diffusion potential.

A further challenge is to preserve physically admissible states in the presence of strong shocks or near-vacuum states,
where high-order methods are prone to producing negative density or pressure due to unphysical oscillations.
This issue has been considered in the AF methods  \cite{Chudzik_2021_Cartesian_AMC,Duan_2025_Active_SJSC,Abgrall_2025_Bound_CCP,Abgrall_2025_novel_SJSC,Abgrall_2026_Robust_JCP,Liu_2026_active_JCP}
and other methods for the NS equations \cite{Fan_2021_Positivity_JCP,Fan_2022_Positivity_JCP,Lin_2023_positivity_JCP,Yamaleev_2023_High_JSC}, to name a few.
For the compressible NS equations, the viscous flux depends on velocity and temperature gradients and contains coupled stress components in multiple dimensions, making positivity-preserving (PP) limiting more delicate.

This work develops a compact fourth-order PP AF method for the one- and two-dimensional compressible NS equations on Cartesian meshes.
The choice of fourth-order operators is motivated by the order reduction to second-order accuracy in two dimensions,
as analyzed and numerically verified in \cite{Duan_2025_asymptotic} for the AF method using the standard third-order operators.
This paper discretizes the viscous flux divergence directly using compact fourth-order operators, while retaining the DoFs of the standard third-order AF method.
For the inviscid part, a biased stencil incorporating a downwind point value achieves fourth-order accuracy with the same DoFs as the standard third-order AF method.
To preserve density and pressure positivity, we construct a monolithic convex limiter for the numerical flux.
The high-order inviscid and viscous contributions are limited jointly against a low-order PP flux based on the local Lax--Friedrichs (LLF) scheme in \cite{Zhang_2017_positivity_JCP}.
A decomposition into directional intermediate states yields a convex representation of the cell average update and 
a sufficient CFL condition for positivity preservation in the explicit strong-stability-preserving (SSP)-RK method.
A scaling limiter enforces the PP property for point values.
Numerical experiments assess the accuracy and PP property of the proposed method.
Its computational efficiency is examined through a comparison with the DG method \cite{Lin_2023_positivity_JCP} for the viscous shock tube.
In this test, AF is approximately ten times faster at a comparable level of accuracy and with the same number of DoFs.

The remainder of this paper is organized as follows.
Section~\ref{sec:compact_AF} presents the 1D and 2D compact AF discretizations.
Section~\ref{sec:ns_pp} presents the first-order PP LLF scheme and the PP limiting.
Numerical experiments are conducted in Section~\ref{sec:results},
followed by concluding remarks in Section~\ref{sec:conclusion}.

\section{The compact AF formulation}\label{sec:compact_AF}
\subsection{The 1D formulation}\label{sec:1d_formulation}
The 1D NS equations can be written as
\begin{equation}\label{eq:ns_1d}
	\partial_t\bU + \partial_x\bF^c(\bU) = \partial_x\bF^v(\bU, \partial_x \bZ) = \partial_x(\bK(\bW) \partial_x\bZ),
\end{equation}
where
\begin{equation*}
	\bF^c=
	\begin{pmatrix}
		\rho u\\ \rho u^2+p\\ (E+p)u
	\end{pmatrix},\quad
	\bK(\bW)=
	\begin{pmatrix}
		0&0&0\\
		0&\frac43\mu&0\\
		0&\frac43\mu u&\frac{\gamma\mu}{(\gamma-1)\Pr}
	\end{pmatrix},
\end{equation*}
with
\begin{equation}\label{eq:ns_1d_variables}
	\bW=(\rho,u,p)^{\top},\quad
	\bZ=\left(0,u,T\right)^{\top}.
\end{equation}

Assume that a 1D computational domain is divided into $N$ uniform cells
$I_i = [x_{\xl}, x_{\xr}]$ with the cell size $\Delta x = x_{\xr}-x_{\xl}$, $i=1,\cdots,N$.
The cell centers are denoted by $x_i = (x_{\xl} + x_{\xr})/2$.
The DoFs are the cell averages and shared interface point values of conservative variables,
\begin{equation*}
	\overline{\bU}_i(t)=\frac{1}{\Delta x}\int_{I_i}\bU_h(x,t)\,\dd x,
	\quad
	\bU_{i+1/2}(t)=\bU_h(x_{i+\frac12},t),
\end{equation*}
with $\bU_h(x,t)$ the numerical solution.
The AF method for solving \eqref{eq:ns_1d} can be constructed as follows,
\begin{subequations}\label{eq:1d_semi_af}
	\begin{align}
		&\frac{\dd \overline{\bU}_i }{\dd t} + \frac{1}{\Delta x}\left( \bF^c_{\xr} - \bF^c_{\xl} \right) = \frac{1}{\Delta x}\left( \bK(\bW_{\xr})(\bZ_x)_{\xr} - \bK(\bW_{\xl})(\bZ_x)_{\xl} \right), \label{eq:af_av_1d}\\
		&\frac{\dd \bU_{\xr} }{\dd t} + \mathcal{D}_{+} (\bF^{c,+})_{\xr} + \mathcal{D}_{-}(\bF^{c,-})_{\xr} = \mathcal{D}^{2} (\bK(\bW), \bZ)_{\xr}, \label{eq:af_pnt_1d}\\
		&(\bZ_x)_{\xr} = \mathcal{D}_{c}(\bZ)_{\xr}, \label{eq:z_derivative_1d}
	\end{align}
\end{subequations}
where the cell average update \eqref{eq:af_av_1d} follows the standard finite volume method,
which ensures conservation.
The corresponding $\bW_{i}$, $\bZ_{i}$, and $\bF^{c,\pm}_i = \bF^{c,\pm}(\bU_i)$ are then evaluated from $\bU_i $.
In the point value update \eqref{eq:af_pnt_1d}, the inviscid part uses the flux vector splitting introduced in \cite{Duan_2025_Active_SJSC},
and the viscous part is based on a compact discretization.
To be specific, the inviscid flux is split using the local Lax--Friedrichs flux vector splitting (FVS), given by
\begin{equation*}
	\bF^{c,\pm} = \frac{1}{2}\left(\bF^c(\bU) \pm \alpha_{\xr} \bU\right),
	\quad
	\lambda\left(\pd{\bF^{c,+}}{\bU}\right) \geqslant 0,
	\quad
	\lambda\left(\pd{\bF^{c,-}}{\bU}\right) \leqslant 0,
\end{equation*}
where $\lambda$ denotes the eigenvalue,
and the numerical viscosity coefficient $\alpha_{\xr}$ is fixed across the spatial stencil and determined by
\begin{equation*}
	\alpha_{\xr} = \max\{ \varrho(\bU_{\xl}), ~\varrho(\bU_{i}), ~\varrho(\bU_{\xr}), ~\varrho(\bU_{i+1}), ~\varrho(\bU_{i+\frac32}) \},
\end{equation*}
with $\varrho$ the spectral radius of the Jacobian matrix $\partial\bF^c/\partial\bU$.
Overbars denote cell averages, whereas quantities without overbars are point values. The cell-centered conservative variables are reconstructed through Simpson's rule
\begin{equation*}
	\bU_i = \frac{1}{4}( 6\overline{\bU}_i - (\bU_{\xl} + \bU_{\xr}) ),
\end{equation*}
which is a $4$th-order approximation for smooth solutions.
Each component of the split flux is discretized using the following biased operators.
Here $z$ denotes a generic scalar component of the quantity being differentiated, distinct from the vector $\bZ$.
Note that $z_i$ is a cell-center point value, not a cell average.
The operators are obtained by differentiating the corresponding cubic reconstructions based on $\{z_{i-\frac12}, z_{i}, z_{i+\frac12}, z_{i+1}\}$ or $\{z_{i}, z_{i+\frac12}, z_{i+1}, z_{i+\frac32}\}$ at $x_{\xr}$,
\begin{subequations}\label{eq:1d_upwind_ops}
	\begin{align}
		\mathcal{D}_{+}(z)_{\xr} &= \frac{1}{3\Delta x}\left( z_{\xl} - 6z_{i} + 3z_{\xr} + 2z_{i+1} \right), \label{eq:1d_upwind_ops_4p} \\
		\mathcal{D}_{-}(z)_{\xr} &= \frac{1}{3\Delta x}\left( -2z_{i} - 3z_{\xr} + 6z_{i+1} - z_{i+\frac32} \right). \label{eq:1d_upwind_ops_4m}
	\end{align}
\end{subequations}
Biased stencils provide upwinding for the split inviscid fluxes, while their symmetric average gives a central difference approximation for the viscous gradients as there is no upwinding mechanism for the diffusion part.
The first-order derivative in \eqref{eq:z_derivative_1d} is obtained by using the central operator defined by
\begin{equation}\label{eq:1d_diff_central_4th}
	\mathcal{D}_{c} (z)_{\xr} =
	\frac12\left( \mathcal{D}_{+}(z) + \mathcal{D}_{-}(z) \right)_{\xr} =  \frac{1}{6\Delta x}\left( z_{\xl} - 8z_i + 8z_{i+1} - z_{i+\frac32} \right).
\end{equation}
The $4$th-order compact operator of the second derivative $\mathcal{D}^{2}$ is adapted from \cite{Kamakoti_2010_High_SJSC},
\begin{equation}\label{eq:1d_diff_compact_kappa_u}
	\mathcal{D}^{2} (\bK, \bZ)_{\xr} = \dfrac{4}{\Delta x^2}\sum_{m=-2}^{2}\sum_{n=-2}^{2} b_{mn}\bK\left(\bW_{i+\frac{m+1}{2}}\right) \bZ_{i+\frac{n+1}{2}}.
\end{equation}
Here the coefficients $b_{mn}$ are obtained from the order conditions, a sparsity requirement, and symmetry.
More precisely, let
\begin{equation}\label{eq:ns_compact_B}
	B = \begin{bmatrix}
		0 & 0 \\ 0 & M \\
	\end{bmatrix}
	- \begin{bmatrix}
		M & 0 \\ 0 & 0 \\
	\end{bmatrix}, \quad
	M =
	\begin{bmatrix}
		\frac{1}{8} & -\frac{1}{6} & \frac{1}{24} & 0 \\[3pt]
		-\frac{1}{6} & -\frac{3}{8} & \frac{2}{3} & -\frac{1}{8} \\[3pt]
		\frac{1}{8} & -\frac{2}{3} & \frac{3}{8} & \frac{1}{6} \\[3pt]
		0 & -\frac{1}{24} & \frac{1}{6} & -\frac{1}{8} \\
	\end{bmatrix},
\end{equation}
and the rows and columns of $B$ are ordered by $\{-2,-1,0,1,2\}$, then $b_{mn}$ denotes the entry associated with row index $m$ and column index $n$ in this ordering.
Note that the matrix $\bK(\bW_{i+\frac{m+1}{2}})$ is evaluated separately at every stencil point,
and the right-hand side in \eqref{eq:af_pnt_1d} discretizes $\partial_x(\bK\partial_x\bZ)$ directly.

\subsection{The 2D formulation}\label{sec:2d_formulation}
The 2D compressible NS equations can be written as
\begin{equation}\label{eq:ns_2d}
	\partial_t\bU + \sum_{\ell=1}^{2}\partial_{x_\ell} \bF_\ell^{c}
	= \sum_{\ell=1}^{2}\partial_{x_\ell} \bF_\ell^{v}
	= \sum_{\ell,m=1}^{2} \partial_{x_\ell}(\bK_{\ell m}(\bW) \partial_{x_m}\bZ),
\end{equation}
with $\bU=(\rho,\rho u,\rho v,E)^{\top}$, $\bW=(\rho,u,v,p)^{\top}$, $\bZ=(0,u,v,T)^{\top}$, and $(x_1,x_2)=(x,y)$.
The viscous fluxes can be expressed as
\begin{equation*}%\label{eq:ns_2d_viscous_decomposition}
	\bF^v_1=\bK_{11}(\bW)\partial_x\bZ+\bK_{12}(\bW)\partial_y\bZ,
	\quad
	\bF^v_2=\bK_{21}(\bW)\partial_x\bZ+\bK_{22}(\bW)\partial_y\bZ.
\end{equation*}
The four $4\times4$ matrices are
\begin{align*}
	\bK_{11}(\bW)&=
	\begin{pmatrix}
		0&0&0&0\\
		0&\frac43\mu&0&0\\
		0&0&\mu&0\\
		0&\frac43\mu u&\mu v&\kappa
	\end{pmatrix},
	&\bK_{22}(\bW)&=
	\begin{pmatrix}
		0&0&0&0\\
		0&\mu&0&0\\
		0&0&\frac43\mu&0\\
		0&\mu u&\frac43\mu v&\kappa
	\end{pmatrix},\\
	\bK_{12}(\bW)&=
	\begin{pmatrix}
		0&0&0&0\\
		0&0&-\frac23\mu&0\\
		0&\mu&0&0\\
		0&\mu v&-\frac23\mu u&0
	\end{pmatrix},
	&\bK_{21}(\bW)&=
	\begin{pmatrix}
		0&0&0&0\\
		0&0&\mu&0\\
		0&-\frac23\mu&0&0\\
		0&-\frac23\mu v&\mu u&0
	\end{pmatrix}.
\end{align*}

Consider a uniform Cartesian mesh with $N_1\times N_2$ cells
$I_{ij} = [x_{\xl}, x_{\xr}]\times [y_{\yl}, y_{\yr}]$ with cell centers $((x_{\xl} + x_{\xr})/2, (y_{\yl} + y_{\yr})/2$ and cell sizes $\Delta x, \Delta y$.
The DoFs consist of the cell averages, edge-centered, and corner point values of the numerical solution $\bU_h(\bx, t)$, defined as
\begin{equation*}
	\overline{\bU}_{\bm{i}}(t) = \frac{1}{\Delta x\Delta y}\int_{I_{\bm{i}}} \bU_h(\bx,t) ~\dd \bx,\quad
	\bU_{\bm{\zeta}}(t) = \bU_h(\bx_{\bm{\zeta}}, t).
\end{equation*}
Here $\bm{i}=(i,j)$, $\bx=(x,y)=(x_1,x_2)$, and $\bm{\zeta} \in \{ (i+\frac12,j), (i,j+\frac12), (i+\frac12,j+\frac12) \}$ denotes the locations of the point values.
The proposed AF method is as follows,
\begin{align}
	&\frac{\dd \overline{\bU}_{\bm{i}}}{\dd t} + \sum_{\ell=1}^2\frac{1}{\Delta x_{\ell}}\left( \widehat{\bF}^{c}_{\ell,\bm{i}+\frac12\bm{e}_\ell} - \widehat{\bF}^{c}_{\ell,\bm{i}-\frac12\bm{e}_\ell} \right) =
	\sum_{\ell=1}^2\frac{1}{\Delta x_{\ell}}\left( \widehat{\bF}^{v}_{\ell,\bm{i}+\frac12\bm{e}_\ell} - \widehat{\bF}^{v}_{\ell,\bm{i}-\frac12\bm{e}_\ell} \right), \nonumber\\
	&\widehat{\bF}^{c}_{\ell,\bm{i}+\frac12\bm{e}_\ell} = \overline{ \bF^{c}_{\ell}(\bU) }_{\bm{i}+\frac12\bm{e}_\ell},
	\quad
	\widehat{\bF}^{v}_{\ell,\bm{i}+\frac12\bm{e}_\ell} = \overline{ \bF^{v}_{\ell}(\bU,\nabla\bZ) }_{\bm{i}+\frac12\bm{e}_\ell}, \nonumber\\
	&\left(\pd{\bZ}{x_\ell}\right)_{\bm{i}+\frac12\bm{e}_\ell} = \mathcal{D}^{(\ell)}_{c} (\bZ)_{\bm{i}+\frac12\bm{e}_\ell}, \nonumber\\
	&\frac{\dd \bU_{\bm{\zeta}} }{\dd t} + \sum_{\ell=1}^2 \left(\mathcal{D}^{(\ell)}_{+} (\bF^{c,+}_{\ell})_{\bm{\zeta}} + \mathcal{D}^{(\ell)}_{-}(\bF^{c,-}_{\ell})_{\bm{\zeta}} \right) = \sum_{\ell=1}^2 \mathcal{D}^{(\ell),2} (\bK_{\ell\ell}(\bW), \bZ)_{\bm{\zeta}} \nonumber \\
	&+ \sum_{\ell,m=1,\ell\not=m}^2 \mathcal{D}_{c}^{(\ell)} (\bK_{\ell m}(\bW) \mathcal{D}_{c}^{(m)} \bZ)_{\bm{\zeta}}, \label{eq:2d_semi_af}
\end{align}
where the overbar on each flux denotes its average along the corresponding edge, approximated by Simpson's rule,
and the mixed derivatives in the point value update are discretized by using the operators in each direction sequentially.
Here, the finite difference operators $\mathcal{D}^{(\ell)}_{\pm}$, $\mathcal{D}^{(\ell)}_{c}$, and $\mathcal{D}^{(\ell),2}$ are applied along the $x_{\ell}$-direction, based on \eqref{eq:1d_upwind_ops_4p}-\eqref{eq:1d_upwind_ops_4m}, \eqref{eq:1d_diff_central_4th}, and \eqref{eq:1d_diff_compact_kappa_u}, respectively.
For example,
\begin{align*}%\label{eq:1d_upwind_ops_4p}
	(\mathcal{D}_{+}^{(1)}z)_{i+\frac12,j} &= \frac{1}{3\Delta x}\left( z_{i-\frac12,j} - 6z_{i,j} + 3z_{i+\frac12,j} + 2z_{i+1,j} \right), \\
	(\mathcal{D}_{-}^{(1)}z)_{i+\frac12,j} &= \frac{1}{3\Delta x}\left( -2z_{i,j} - 3z_{i+\frac12,j} + 6z_{i+1,j} - z_{i+\frac32,j} \right), \\
	(\mathcal{D}_{+}^{(2)}z)_{i,j+\frac12} &= \frac{1}{3\Delta y}\left( z_{i,j-\frac12} - 6z_{i,j} + 3z_{i,j+\frac12} + 2z_{i,j+1} \right), \\
	(\mathcal{D}_{-}^{(2)}z)_{i,j+\frac12} &= \frac{1}{3\Delta y}\left( -2z_{i,j} - 3z_{i,j+\frac12} + 6z_{i,j+1} - z_{i,j+\frac32} \right).
\end{align*}
For each conservative component $z$, the cell-centered point value is obtained through
\begin{align*}
	z_{i,j} = \frac{1}{16}\Big[
	36\overline{z}_{i,j} &- 4\left(z_{\xl,j}+z_{\xr,j}+z_{i,\yl}+z_{i,\yr}\right)\nonumber \\
	&- \left(z_{\xl,\yl} + z_{\xr,\yl} + z_{\xl,\yr} + z_{\xr,\yr}\right)
	\Big].
\end{align*}
The central finite difference operator is
\begin{equation*}
	(\mathcal{D}_{c}^{(\ell)} z)_{i,j} = \frac12\left( (\mathcal{D}_{+}^{(\ell)}z)_{i,j} + (\mathcal{D}_{-}^{(\ell)}z)_{i,j} \right).
\end{equation*}
The compact discretization for $\partial_{x}(\bK_{11}(\bW) \partial_{x}\bZ),$ can be expressed as
\begin{equation*}
	\mathcal{D}^{(1),2} (\bK, \bZ)_{\xr,j} = \dfrac{4}{\Delta x^2}\sum_{m=-2}^{2}\sum_{n=-2}^{2} b_{mn}\bK\left(\bW_{i+\frac{m+1}{2},j}\right) \bZ_{i+\frac{n+1}{2},j}.
\end{equation*}
The remaining formulas follow by half-index shifts in $i$ and/or $j$.

\subsection{Time discretization}
For the semi-discretizations \eqref{eq:1d_semi_af} and \eqref{eq:2d_semi_af},
this paper uses the explicit five-stage, fourth-order strong-stability-preserving (SSP) RK method SSP-RK54 for time integration.
The CFL condition is
\begin{equation}\label{eq:cfl}
	\Delta t\leqslant
	\dfrac{ 1 }
	{
		\max\limits_{\bm{i},\ell}\dfrac{ \varrho_{\ell}( \overline{\bU}_{\bm{i}}) }{ C^{c}_{\texttt{CFL}}\Delta x_\ell }
		+ \max\limits_{\bm{i},\ell}\dfrac{ \widetilde{\varrho}( \overline{\bU}_{\bm{i}}) }{ C^{v}_{\texttt{CFL}}\Delta x_\ell^2} },
\end{equation}
where $\varrho_{\ell} = \abs{u_{\ell}} + c$ and $\widetilde{\varrho} = \max\{ \frac{4\mu}{3\rho}, \frac{\gamma\mu}{\Pr\rho} \}$ are the maximum absolute values of the eigenvalues of the inviscid and viscous parts, respectively.
The CFL numbers $C^{c}_{\texttt{CFL}} = 1.04/d$ and $C^{v}_{\texttt{CFL}}= 0.33/d$ are chosen based on the von Neumann analysis in \cite{Duan_2026_compact} for the inviscid and viscous parts, respectively.
Note that, for the 1D linear advection equation, the stable CFL limit is higher than that of the standard $3$rd-order AF method \cite{Abgrall_2023_Extensions_EMMNA},
which is another advantage of using the additional downwind point in the biased stencil.

\section{Positivity-preserving limiting and oscillation control}\label{sec:ns_pp}

The high-order AF method may produce negative density or pressure,
leading to unphysical solutions or even causing the simulations to blow up.
This section first presents a monolithic convex limiting approach that constrains individual numerical fluxes to preserve density and pressure positivity in the cell average update, adapting the approach proposed in \cite{Kuzmin_2020_Monolithic_CMAME} to the NS equations.

\begin{definition}
	An AF method is called positivity-preserving (PP) if starting from cell averages and point values in the admissible state set
	\begin{equation}\label{eq:ns_g}
		\mathcal{G} = \left\{\bU = \left(\rho, \rho\bu, E\right) ~\Big|~ \rho > 0,~ p = (\gamma-1)\left(E - \frac{\norm{\rho\bu}^2}{2\rho}\right) > 0 \right\},
	\end{equation}
	the cell averages and point values remain in $\mathcal G$ at the next time step.
\end{definition}

The PP properties of the limiting are shown using representations in terms of intermediate states that stay in the convex admissible state set \eqref{eq:ns_g}.
Positivity is proved separately for the inviscid and viscous split states, which are then combined in the final limiting procedure.

\subsection{Preliminaries}

The following statement for the inviscid part is the usual one for the Euler equations.

\begin{lemma}[LF splitting for the inviscid part]\label{lem:ns_convective_split}
	Let $\bU\in\mathcal G$ and the sound speed $c=\sqrt{\gamma p/\rho}$,
	the following statement holds,
	\begin{equation*}
		\bU\pm\frac{\bF_{\ell}^c(\bU)}{\alpha_{\ell}}\in\mathcal G, \quad \text{if}~\alpha_{\ell} \geqslant \varrho_{\ell} = |u_\ell|+c.
	\end{equation*}
\end{lemma}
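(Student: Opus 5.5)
We verify the two defining inequalities of $\mathcal G$ directly for the state $\bm V^\pm := \bU\pm\bF_\ell^c(\bU)/\alpha_\ell$. Write $\bu=(u_1,\dots,u_d)$ and $\beta := \pm 1/\alpha_\ell$. Then
\begin{equation*}
	\bm V^\pm = \bigl(\rho(1+\beta u_\ell),~\rho\bu(1+\beta u_\ell)+\beta p\,\bm e_\ell,~E(1+\beta u_\ell)+\beta p u_\ell\bigr).
\end{equation*}

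The density component is $\rho(1+\beta u_\ell)$. Since $\alpha_\ell \geqslant |u_\ell|+c > |u_\ell|$, we have $|\beta u_\ell|<1$, so this component is positive.

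For the pressure, set $s:=1+\beta u_\ell>0$. The density is $\tilde\rho=\rho s$ and the momentum is $\rho\bu s+\beta p\bm e_\ell$. The pressure function is $(\gamma-1)(\tilde E-\|\tilde{\bm m}\|^2/(2\tilde\rho))$, and multiplying by $\tilde\rho/(\gamma-1)>0$ shows its sign is that of
\begin{equation*}
	\rho s\,(Es+\beta p u_\ell) - \tfrac12\|\rho\bu s+\beta p\bm e_\ell\|^2.
\end{equation*}
Expand using $E=\frac{p}{\gamma-1}+\frac12\rho\|\bu\|^2$. The kinetic-energy terms $\frac12\rho^2 s^2\|\bu\|^2$ cancel, and the cross term $\rho s\beta p u_\ell$ cancels the inner-product term $-\rho s\beta p u_\ell$. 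What remains is
\begin{equation*}
	\frac{\rho p s^2}{\gamma-1} - \frac12\beta^2p^2 .
\end{equation*}
Since $p>0$, positivity is therefore equivalent to
\begin{equation*}
	s^2 > \frac{(\gamma-1)\beta^2 p}{2\rho},
	\qquad\text{that is,}\qquad
	|\alpha_\ell \pm u_\ell| > \sqrt{\tfrac{\gamma-1}{2\gamma}}\,c,
\end{equation*}
where we multiplied through by $\alpha_\ell^2$ and used $c^2=\gamma p/\rho$.

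The hypothesis gives $|\alpha_\ell\pm u_\ell| \geqslant \alpha_\ell-|u_\ell|\geqslant c$. Because $\gamma>1$, we have $\sqrt{(\gamma-1)/(2\gamma)}<1$, so the required strict inequality holds (note $c>0$ since $p,\rho>0$). Hence $\bm V^\pm\in\mathcal G$.

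The main obstacle is the algebraic cancellation in the pressure expression, in particular keeping track of the $\beta p\bm e_\ell$ term in the momentum. A slip there could leave spurious $u_\ell$-dependent terms. I would organize the computation by first treating $d=1$ and then noting that the transverse velocity components enter only through kinetic-energy terms, which cancel identically.
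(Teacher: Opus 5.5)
Your proof is correct. The paper gives no proof here, citing the lemma as the standard Euler result, and your direct computation of the internal energy of the split state is that standard argument, in the same style as the paper's proof of Lemma~\ref{lem:ns_viscous_split}; you also get the sharper sufficient condition $\alpha_\ell > |u_\ell| + \sqrt{(\gamma-1)/(2\gamma)}\,c$, which the hypothesis $\alpha_\ell \geqslant |u_\ell| + c$ implies.
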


The following useful PP property of the directional LF splitting can be found in \cite{Zhang_2017_positivity_JCP}.

\begin{lemma}[LF splitting for the viscous part]\label{lem:ns_viscous_split}
	Let $\bU\in\mathcal G$, $\bm{\tau}_\ell=\btau\bm e_\ell$, $q_\ell=\bq\cdot\bm{e}_\ell$,
	and introduce an auxiliary variable $\bS$ as an approximation to $\nabla\bZ$.
	Define
	\begin{equation*}
		b_\ell(\bU,\bS)=
		\frac{\sqrt{\rho^2q_{\ell}^2+2\rho^2e\norm{\bm{\tau}_\ell}^2}
			+\rho\abs{q_{\ell}}}
		{2\rho^2e},
	\end{equation*}
	then
	\begin{equation*}
		\bU\pm\frac{\bF_{\ell}^v(\bU,\bS)}{\beta_\ell}
		\in\mathcal G, \quad
		\text{if}~ \beta_\ell > b_\ell.
	\end{equation*}
\end{lemma}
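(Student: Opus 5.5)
We prove Lemma~\ref{lem:ns_viscous_split} by checking the two conditions defining $\mathcal G$ directly for the perturbed state $\widetilde{\bU}=\bU\pm\bF_\ell^v/\beta_\ell$. The key structural fact is that the density component of $\bF_\ell^v$ vanishes, so $\widetilde\rho=\rho>0$ automatically. The only nontrivial step is the pressure condition. It reduces to the positivity of a quadratic polynomial in $1/\beta_\ell$, whose larger root is exactly $b_\ell$.

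\textbf{Writing out the perturbed pressure.} Set $s=\pm1/\beta_\ell$. The perturbed momentum and energy are
\[
\widetilde{\rho\bu}=\rho\bu+s\bm\tau_\ell,\qquad \widetilde E=E+s(\bu\cdot\bm\tau_\ell-q_\ell).
\]
Since $p=(\gamma-1)\rho e$ and $\widetilde\rho=\rho$, it suffices to show that $\rho\widetilde E-\tfrac12\norm{\widetilde{\rho\bu}}^2>0$. Expand this expression using $E=\rho e+\tfrac12\rho\norm{\bu}^2$. The cross terms $s\rho\,\bu\cdot\bm\tau_\ell$ from $\rho\widetilde E$ and from $\tfrac12\norm{\widetilde{\rho\bu}}^2$ cancel. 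What remains is
\[
\rho\widetilde E-\tfrac12\norm{\widetilde{\rho\bu}}^2=\rho^2 e-s\rho q_\ell-\tfrac12 s^2\norm{\bm\tau_\ell}^2 .
\]
This cancellation is the main algebraic step, and I would verify it carefully.

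\textbf{Bounding the quadratic.} The worst case is $s q_\ell\ge 0$, so it suffices to show
\[
g(t)=\rho^2 e-t\rho\abs{q_\ell}-\tfrac12 t^2\norm{\bm\tau_\ell}^2>0\qquad\text{for } t=1/\beta_\ell .
\]
The function $g$ is concave, with $g(0)=\rho^2e>0$. Its unique positive root is
\[
t^*=\frac{-\rho\abs{q_\ell}+\sqrt{\rho^2q_\ell^2+2\rho^2e\norm{\bm\tau_\ell}^2}}{\norm{\bm\tau_\ell}^2}
=\frac{2\rho^2e}{\rho\abs{q_\ell}+\sqrt{\rho^2q_\ell^2+2\rho^2e\norm{\bm\tau_\ell}^2}}=\frac1{b_\ell},
\]
where the second form comes from rationalizing the numerator. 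The rationalized form remains valid when $\bm\tau_\ell=0$; in that case $g$ is linear and its root is $\rho e/\abs{q_\ell}$, which again equals $1/b_\ell$. If $b_\ell=0$, then $\bm\tau_\ell=0$ and $q_\ell=0$, so $g\equiv\rho^2e>0$ for every $t$.

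\textbf{Conclusion.} The condition $\beta_\ell>b_\ell$ gives $0\le t<t^*$. Because $g$ is concave with $g(0)>0$ and only one positive root $t^*$, it is positive on $[0,t^*)$. Hence $\widetilde p>0$ and $\widetilde{\bU}\in\mathcal G$ for both signs.

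These are elementary manipulations; the only care needed is the degenerate cases and the sign choice, and I expect no real obstacle.
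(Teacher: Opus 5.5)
Your proof is correct and follows the paper's argument: the density is unchanged, and the internal energy of the perturbed state is $\rho e \mp q_\ell/\beta_\ell - \norm{\bm{\tau}_\ell}^2/(2\rho\beta_\ell^2)$, a quadratic in $1/\beta_\ell$ that is positive exactly when $\beta_\ell > b_\ell$. Beyond the paper, you also carry out the root computation and treat the degenerate cases $\bm{\tau}_\ell = 0$ and $b_\ell = 0$, which the paper leaves implicit.
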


\begin{proof}
	The mass component of $\bF_{\ell}^v$ is zero, so density positivity follows from $\bU\in\mathcal{G}$.
	Using $\bF_{\ell}^v=(0,\bm \tau_{\ell},\bu\cdot\bm \tau_{\ell}-q_{\ell})^{\top}$ gives
	\begin{equation*}
		\rho e\left(\bU\pm\frac{\bF_{\ell}^v}{\beta_{\ell}}\right)
		=\rho e(\bU)\mp\frac{q_{\ell}}{\beta_{\ell}}
		-\frac{\norm{\bm \tau_{\ell}}^2}{2\rho\beta_{\ell}^2}.
	\end{equation*}
	It is positive if $\rho e > \frac{\abs{q_{\ell}}}{\beta_{\ell}}
	+ \frac{\norm{\bm \tau_{\ell}}^2}{2\rho\beta_{\ell}^2}$,
	which yields the condition $\beta_\ell > b_\ell$.
\end{proof}

\begin{lemma}[LF splitting for the total flux]\label{lem:ns_total_flux_split}
	Let
	\begin{equation}\label{eq:ns_total_physical_flux_speed}
		\bF_\ell(\bU,\bS)=\bF_{\ell}^c(\bU)-\bF_{\ell}^v(\bU,\bS),
		\quad \lambda_\ell=\alpha_{\ell}+\beta_\ell,
	\end{equation}
	where $\alpha_{\ell}$ and $\beta_\ell$ satisfy the conditions in
	Lemmas~\ref{lem:ns_convective_split} and~\ref{lem:ns_viscous_split}, respectively.
	Then
	\begin{equation}\label{eq:ns_total_flux_split_states}
		\bU\pm\frac{\bF_\ell(\bU,\bS)}{\lambda_\ell}\in\mathcal G.
	\end{equation}
\end{lemma}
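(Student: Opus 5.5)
The plan is to write the total-flux split state as a convex combination of the inviscid and viscous split states from Lemmas~\ref{lem:ns_convective_split} and~\ref{lem:ns_viscous_split}, and then use convexity of $\mathcal G$. Fix a sign $\pm$ and set $\lambda_\ell=\alpha_\ell+\beta_\ell$. Since $\alpha_\ell\geqslant\varrho_\ell>0$ and $\beta_\ell>b_\ell\geqslant0$, we have $\lambda_\ell>0$. The weights $\theta=\alpha_\ell/\lambda_\ell$ and $1-\theta=\beta_\ell/\lambda_\ell$ therefore lie in $(0,1)$ and sum to one. The key identity is
\begin{equation*}
\bU\pm\frac{\bF_\ell(\bU,\bS)}{\lambda_\ell}
=\frac{\alpha_\ell}{\lambda_\ell}\left(\bU\pm\frac{\bF_\ell^c(\bU)}{\alpha_\ell}\right)
+\frac{\beta_\ell}{\lambda_\ell}\left(\bU\mp\frac{\bF_\ell^v(\bU,\bS)}{\beta_\ell}\right).
\end{equation*}
It is verified directly: the $\bU$ terms combine to $\frac{\alpha_\ell+\beta_\ell}{\lambda_\ell}\bU=\bU$, and the flux terms give $\pm(\bF_\ell^c-\bF_\ell^v)/\lambda_\ell$.

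Next, we check that each bracket lies in $\mathcal G$. The first does by Lemma~\ref{lem:ns_convective_split}, because $\alpha_\ell\geqslant\varrho_\ell$. The second does by Lemma~\ref{lem:ns_viscous_split}, because $\beta_\ell>b_\ell$; that lemma covers both signs, so the flipped sign $\mp$ on the viscous part is harmless. This sign flip is the only point needing care.

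Finally, we use the convexity of $\mathcal G$. The density constraint is linear in $\bU$. The pressure $p(\bU)=(\gamma-1)\bigl(E-\tfrac{\norm{\rho\bu}^2}{2\rho}\bigr)$ is concave on $\{\rho>0\}$, because $\tfrac{\norm{\bm m}^2}{2\rho}$ is a perspective of a convex quadratic and hence jointly convex in $(\rho,\bm m)$. A convex combination of two states with positive density and positive pressure thus again has positive density, and its pressure is at least the same combination of the two positive pressures, so it is positive. This gives \eqref{eq:ns_total_flux_split_states}.

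The only mildly delicate step is justifying convexity. Since the paper already calls $\mathcal G$ convex, I would cite this standard fact rather than reprove it.
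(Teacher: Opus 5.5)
Your proposal is correct and follows the same route as the paper. The paper uses exactly this identity, with weights $\alpha_\ell/\lambda_\ell$ and $\beta_\ell/\lambda_\ell$ and the sign flip on the viscous split state, and concludes by convexity of $\mathcal G$. Your checks that both weights are positive and that the pressure is concave only make explicit what the paper leaves implicit.
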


\begin{proof}
	The identity
	\begin{equation*}
		\bU\pm\frac{\bF_\ell}{\lambda_\ell}
		=\frac{\alpha_{\ell}}{\lambda_\ell}
		\left(\bU\pm\frac{\bF_{\ell}^c}{\alpha_{\ell}}\right)
		+\frac{\beta_\ell}{\lambda_\ell}
		\left(\bU\mp\frac{\bF_{\ell}^v}{\beta_\ell}\right)
	\end{equation*}
	is a convex combination,
	so the proof follows from the convexity of $\mathcal G$.
\end{proof}

For $d\in\{1,2\}$, consider the following low-order LLF scheme for the cell average,
\begin{align}
	&\overline{\bU}_{\bm{i}}^{\texttt{L}} = \overline{\bU}_{\bm{i}}^{n} - \sum_{\ell=1}^{d}\mu_{\ell} \left(\widehat{\bF}^{\texttt{L}}_{\bm{i} + \frac12\bm{e}_\ell} - \widehat{\bF}^{\texttt{L}}_{\bm{i} - \frac12\bm{e}_\ell}\right), \label{eq:llf_xl} \\
	&\widehat{\bF}^{\texttt{L}}_{\bm{i} + \frac12\bm{e}_\ell}
	=\widehat{\bF}^{c,\texttt{L}}_{\bm{i} + \frac12\bm{e}_\ell} - \widehat{\bF}^{v,\texttt{L}}_{\bm{i} + \frac12\bm{e}_\ell},
	\quad
	\mu_{\ell} = \Delta t^n / \Delta x_{\ell}, \nonumber
\end{align}
where
\begin{subequations}\label{eq:ns_low_face_fluxes}
	\begin{align}
		&\widehat{\bF}^{c,\texttt{L}}_{\bm{i}+\frac12\bm{e}_{\ell}} = \frac12\left(\bF_{\ell}^{c}(\overline{\bU}_{\bm{i}}) + \bF_{\ell}^{c}(\overline{\bU}_{\bm{i} + \bm{e}_{\ell}})\right)
		- \frac{\alpha_{\bm{i} + \frac12\bm{e}_{\ell}}}{2}\left(\overline{\bU}_{\bm{i}+\bm{e}_{\ell}} - \overline{\bU}_{\bm{i}}\right), \label{eq:ns_low_convective_flux}\\
		&\widehat{\bF}^{v,\texttt{L}}_{\bm{i}+\frac12\bm{e}_{\ell}} = \frac12\left(\bF_{\ell}^{v}(\overline{\bU}_{\bm{i}}, \bS_{\bm{i}}) + \bF_{\ell}^{v}(\overline{\bU}_{\bm{i} + \bm{e}_{\ell}}, \bS_{\bm{i} + \bm{e}_{\ell}})\right)
		+ \frac{\beta_{\bm{i} + \frac12\bm{e}_{\ell}}}{2}\left(\overline{\bU}_{\bm{i}+\bm{e}_{\ell}} - \overline{\bU}_{\bm{i}}\right). \label{eq:ns_low_viscous_flux}
	\end{align}
\end{subequations}
Here the low-order gradient $\bS_{\bm{i}}$ is computed using the central difference of $\bZ(\overline{\bU}_{\bm{i}})$.
Note that the superscript $n$ is omitted for brevity.
Define the total flux
\begin{equation*}
	\bF_{\ell,\bm{i}} = \bF_{\ell}^c(\overline{\bU}_{\bm{i}}) - \bF_{\ell}^v(\overline{\bU}_{\bm{i}}, \bS_{\bm{i}}),
\end{equation*}
then the total numerical flux can also be expressed as
\begin{equation}\label{eq:ns_low_total_llf_identity}
	\widehat{\bF}^{\texttt{L}}_{\bm{i} + \frac12\bm{e}_\ell}
	= \frac12\left(\bF_{\ell,\bm{i}} + \bF_{\ell,\bm{i} + \bm{e}_{\ell}} \right)
	- \frac{\lambda_{\bm{i}+ \frac12\bm{e}_\ell}}{2}
	\left(\overline{\bU}_{\bm{i} + \bm{e}_{\ell}}-\overline{\bU}_{\bm{i}}\right),
\end{equation}
where
\begin{equation*}
	\lambda_{\bm{i}+\frac12\bm{e}_{\ell}}=\alpha_{\bm{i}+\frac12\bm{e}_{\ell}}+\beta_{\bm{i}+\frac12\bm{e}_{\ell}}.
\end{equation*}
The numerical viscosity coefficients are chosen as
\begin{align*}
	\alpha_{\bm{i}+ \frac12\bm{e}_\ell}
	&= \max\left\{ \varrho_{\ell}(\overline{\bU}_{\bm{i}}),
	\varrho_{\ell}(\overline{\bU}_{\bm{i} + \bm{e}_{\ell}}) \right\},\\
	\beta_{\bm{i}+ \frac12\bm{e}_\ell}
	&= \max\left\{ b_\ell(\overline{\bU}_{\bm{i}}, \bS_{\bm{i}}),
	b_\ell(\overline{\bU}_{\bm{i} + \bm{e}_{\ell}}, \bS_{\bm{i} + \bm{e}_{\ell}}) \right\}.
\end{align*}
With the above notations, the LLF scheme \eqref{eq:llf_xl} can be written as a convex decomposition
\begin{equation*}
	\begin{aligned}
		\overline{\bU}_{\bm{i}}^{\texttt{L}} = \frac{1}{2d}\sum_{\ell=1}^{d}\left( \bU_{\ell,\bm{i}}^{\texttt{L},+} + \bU_{\ell,\bm{i}}^{\texttt{L},-} \right),
	\end{aligned}
\end{equation*}
where
\begin{equation}\label{eq:ns_1d_monolithic_intermediate_states}
	\begin{aligned}
		&\bU_{\ell,\bm{i}}^{\texttt{L},+} = \overline{\bU}_{\bm{i}} - 2d\mu_{\ell}\left(\widehat{\bF}_{\bm{i} + \frac12\bm{e}_{\ell}}^{\texttt{L}}-\bF_{\ell,\bm{i}}\right), \\
		&\bU_{\ell,\bm{i}}^{\texttt{L},-} = \overline{\bU}_{\bm{i}} - 2d\mu_{\ell}\left(\bF_{\ell,\bm{i}}-\widehat{\bF}_{\bm{i} - \frac12\bm{e}_{\ell}}^{\texttt{L}}\right).
	\end{aligned}
\end{equation}
To show the PP property of the LLF scheme, it suffices to show that the intermediate states stay in $\mathcal{G}$ due to the convexity of $\mathcal{G}$.

\begin{proposition}\label{thm:ns_1d_intermediate_positive}
	Assume that $\overline{\bU}_{\bm{i}}\in\mathcal G$ for all $\bm{i}$,
	and the numerical viscosity coefficients satisfy the conditions in
	Lemmas~\ref{lem:ns_convective_split} and~\ref{lem:ns_viscous_split}.
	Then the intermediate states in \eqref{eq:ns_1d_monolithic_intermediate_states} satisfy $\bU_{\ell,\bm{i}}^{\texttt{L},\pm} \in \mathcal G$, under the following time-step size constraint
	\begin{equation}\label{eq:ns_1d_monolithic_cfl}
		\begin{aligned}
			&\Delta t\leqslant\min_{1\leqslant\ell\leqslant d}\frac{\Delta x_\ell}{2d\max\limits_{\bm{i}}\{\lambda_{\bm i+\frac12\bm e_\ell}\}}.
		\end{aligned}
	\end{equation}
\end{proposition}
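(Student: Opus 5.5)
The plan is to rewrite each intermediate state as a convex combination of $\overline{\bU}_{\bm{i}}$ and LF-split states, which lie in $\mathcal G$ by Lemma~\ref{lem:ns_total_flux_split}. Convexity of $\mathcal G$ then gives the claim. I treat $\bU_{\ell,\bm{i}}^{\texttt{L},+}$; the state $\bU_{\ell,\bm{i}}^{\texttt{L},-}$ is symmetric.

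Write $\lambda=\lambda_{\bm{i}+\frac12\bm{e}_\ell}$ and $\sigma=2d\mu_\ell$. Using the identity \eqref{eq:ns_low_total_llf_identity},
\begin{equation*}
	\widehat{\bF}^{\texttt{L}}_{\bm{i}+\frac12\bm{e}_\ell}-\bF_{\ell,\bm{i}}
	=\frac12\left(\bF_{\ell,\bm{i}+\bm{e}_\ell}-\bF_{\ell,\bm{i}}\right)-\frac{\lambda}{2}\left(\overline{\bU}_{\bm{i}+\bm{e}_\ell}-\overline{\bU}_{\bm{i}}\right).
\end{equation*}
Substituting this into \eqref{eq:ns_1d_monolithic_intermediate_states} and regrouping gives
\begin{equation*}
	\bU_{\ell,\bm{i}}^{\texttt{L},+}
	=\left(1-\frac{\sigma\lambda}{2}\right)\left(\overline{\bU}_{\bm{i}}+\frac{\bF_{\ell,\bm{i}}}{\lambda}\right)\Big/ 1
	\;\text{(schematically)}+\frac{\sigma\lambda}{2}\left(\overline{\bU}_{\bm{i}+\bm{e}_\ell}-\frac{\bF_{\ell,\bm{i}+\bm{e}_\ell}}{\lambda}\right)+\cdots.
\end{equation*}
More carefully, the expression splits exactly as
\begin{equation*}
	\bU_{\ell,\bm{i}}^{\texttt{L},+}=\left(1-\sigma\lambda\right)\overline{\bU}_{\bm{i}}
	+\frac{\sigma\lambda}{2}\left(\overline{\bU}_{\bm{i}}+\frac{\bF_{\ell,\bm{i}}}{\lambda}\right)
	+\frac{\sigma\lambda}{2}\left(\overline{\bU}_{\bm{i}+\bm{e}_\ell}-\frac{\bF_{\ell,\bm{i}+\bm{e}_\ell}}{\lambda}\right).
\end{equation*}
I would verify this by expanding: the $\overline{\bU}_{\bm{i}}$ coefficients sum to $1-\sigma\lambda/2$, the $\overline{\bU}_{\bm{i}+\bm{e}_\ell}$ coefficient is $\sigma\lambda/2$, and the flux terms appear as $\tfrac{\sigma}{2}(\bF_{\ell,\bm{i}}-\bF_{\ell,\bm{i}+\bm{e}_\ell})$, which matches.

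Next I check that each state in this decomposition lies in $\mathcal G$. The state $\overline{\bU}_{\bm{i}}$ is in $\mathcal G$ by assumption. For the two LF-split states, $\lambda$ is a single interface value, so I need $\lambda\ge\alpha+\beta$ at the cell states $(\overline{\bU}_{\bm{i}},\bS_{\bm{i}})$ and $(\overline{\bU}_{\bm{i}+\bm{e}_\ell},\bS_{\bm{i}+\bm{e}_\ell})$ separately. This holds because $\alpha_{\bm{i}+\frac12\bm{e}_\ell}$ and $\beta_{\bm{i}+\frac12\bm{e}_\ell}$ are maxima over both neighbours, and the hypothesis gives the inequalities required in Lemmas~\ref{lem:ns_convective_split} and~\ref{lem:ns_viscous_split}. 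Lemma~\ref{lem:ns_total_flux_split} then places $\overline{\bU}\pm\bF_\ell/\lambda$ in $\mathcal G$, applied with $\bF_\ell$ evaluated at the matching cell and using the bound $\lambda\ge\alpha_\ell+\beta_\ell$ with $\alpha_\ell,\beta_\ell$ being the interface maxima.

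It remains to show the coefficients are nonnegative, which requires $1-\sigma\lambda\ge0$, i.e.\ $2d\,\Delta t\,\lambda/\Delta x_\ell\le1$. This is exactly \eqref{eq:ns_1d_monolithic_cfl}. The $-$ state is handled the same way with the interface $\bm{i}-\frac12\bm{e}_\ell$ and the split states $\overline{\bU}_{\bm{i}}-\bF_{\ell,\bm{i}}/\lambda$ and $\overline{\bU}_{\bm{i}-\bm{e}_\ell}+\bF_{\ell,\bm{i}-\bm{e}_\ell}/\lambda$.

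The main subtlety is the strict inequality $\beta_\ell>b_\ell$ in Lemma~\ref{lem:ns_viscous_split}, whereas $\beta$ is defined here as a max with equality. I would handle this by noting that the pressure expression is still nonnegative at equality. Alternatively, I would use the inviscid part: when $\alpha>0$, the convex combination in Lemma~\ref{lem:ns_total_flux_split} contains the inviscid-split state with positive weight. Its internal energy is strictly positive because $\alpha>|u_\ell|$ strictly ($c>0$), so $\rho e$ of the combined state stays strictly positive.
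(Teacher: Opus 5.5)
Your proposal is correct and follows the paper's proof exactly. Your corrected decomposition, with weights $1-\sigma\lambda$, $\sigma\lambda/2$, $\sigma\lambda/2$ and $\sigma=2d\mu_\ell$, is the paper's $1-2d\nu_{\ell,+}$, $d\nu_{\ell,+}$, $d\nu_{\ell,+}$; it is then closed by Lemma~\ref{lem:ns_total_flux_split}, convexity of $\mathcal G$, and the CFL condition making the weights nonnegative. You should delete the garbled ``schematic'' display. Your remark on $\beta_\ell>b_\ell$ goes beyond the paper, which simply assumes the strict inequality in the hypothesis; of your two fixes, only the concavity argument gives strict membership in $\mathcal G$, since ``nonnegative at equality'' yields only $p\geqslant 0$, and its strictness comes from Lemma~\ref{lem:ns_convective_split} rather than from $\alpha>|u_\ell|$ alone.
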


\begin{proof}
	For the two interfaces, let
	$\nu_{\ell,\pm}=\mu_\ell\lambda_{\bm i\pm\frac12\bm e_\ell}$, respectively.
	The intermediate states can be expressed as convex decompositions
	\begin{align*}
		&\bU_{\ell,\bm{i}}^{\texttt{L},+}=\left(1-2d\nu_{\ell,+}\right)\overline{\bU}_{\bm{i}}+d\nu_{\ell,+}\left(\overline{\bU}_{\bm{i}}+\frac{\bF_{\ell,\bm{i}}}{\lambda_{\bm{i}+\frac12\bm{e}_{\ell}}}\right)+d\nu_{\ell,+}\left(\overline{\bU}_{\bm{i}+\bm{e}_{\ell}}-\frac{\bF_{\ell,\bm{i}+\bm{e}_{\ell}}}{\lambda_{\bm{i}+\frac12\bm{e}_{\ell}}}\right), \\
		&\bU_{\ell,\bm{i}}^{\texttt{L},-}=\left(1-2d\nu_{\ell,-}\right)\overline{\bU}_{\bm{i}}+d\nu_{\ell,-}\left(\overline{\bU}_{\bm{i}}-\frac{\bF_{\ell,\bm{i}}}{\lambda_{\bm{i}-\frac12\bm{e}_{\ell}}}\right)+d\nu_{\ell,-}\left(\overline{\bU}_{\bm{i}-\bm{e}_{\ell}}+\frac{\bF_{\ell,\bm{i}-\bm{e}_{\ell}}}{\lambda_{\bm{i}-\frac12\bm{e}_{\ell}}}\right).
	\end{align*}
	Under \eqref{eq:ns_1d_monolithic_cfl}, the coefficients $1-2d\nu_{\ell,\pm}$, $d\nu_{\ell,\pm}$, and $d\nu_{\ell,\pm}$ are nonnegative and sum to one.
	Using Lemma~\ref{lem:ns_total_flux_split} and the convexity of $\mathcal{G}$, one has $\bU_{\ell,\bm{i}}^{\texttt{L},\pm} \in \mathcal G$.
\end{proof}

It is known that the low-order scheme is PP and non-oscillatory,
thus the basic idea of the limiting for cell average is to blend the high-order AF numerical flux with the low-order LLF flux, which guarantees local conservation.
To be specific, at the cell interface $\bm i+\frac12\bm e_\ell$, define the total flux in the high-order AF method as
\begin{equation*}
	\widehat{\bF}^{\texttt{H}}_{\bm i+\frac12\bm e_\ell}
	=\widehat{\bF}^{c,\texttt{H}}_{\bm i+\frac12\bm e_\ell}
	-\widehat{\bF}^{v,\texttt{H}}_{\bm i+\frac12\bm e_\ell},
\end{equation*}
and denote the anti-diffusion term by
\begin{equation*}
	\Delta \widehat{\bF}_{\bm i+\frac12\bm e_\ell} = \widehat{\bF}^{\texttt H}_{\bm i+\frac12\bm e_\ell}
	-\widehat{\bF}^{\texttt L}_{\bm i+\frac12\bm e_\ell}.
\end{equation*}
Then the modified limited flux can be generally expressed as
\begin{equation*}
	\widehat{\bF}^{\texttt{modified}}_{\bm i+\frac12\bm e_\ell}
	=\widehat{\bF}^{\texttt L}_{\bm i+\frac12\bm e_\ell}
	+ \theta^{\texttt{modified}}_{\bm i+\frac12\bm e_\ell}
	\Delta \widehat{\bF}_{\bm i+\frac12\bm e_\ell},
\end{equation*}
where $\theta^{\texttt{modified}}_{\bm i+\frac12\bm e_\ell} \in [0,1]$ is determined by a smoothness indicator or by density and pressure positivity requirements.
The blending parameters are determined sequentially below.

\subsection{Smoothness indicator based limiting}
This paper adopts the smoothness indicator in \cite{Duan_2025_Active_SJSC},
which combines Jameson's shock sensor in \cite{Jameson_1981_Solutions_AJ} and a modified Ducros shock sensor \cite{Ducros_1999_Large_JCP}.
For $d\in\{1,2\}$, let
\begin{equation*}
	(\varphi_1)_{\bm{i}}
	=\max_{1\leqslant m\leqslant d}
	\dfrac{\abs{\bar{p}_{\bm{i}+\bm{e}_{m}}
			-2\bar{p}_{\bm{i}}+\bar{p}_{\bm{i}-\bm{e}_{m}}}}
	{\abs{\bar{p}_{\bm{i}+\bm{e}_{m}}
			+2\bar{p}_{\bm{i}}+\bar{p}_{\bm{i}-\bm{e}_{m}}}},
\end{equation*}
and
\begin{equation*}
	(\varphi_2)_{\bm{i}}
	=\max\left\{
	\dfrac{-(\nabla\cdot\bar{\bu})_{\bm{i}}}
	{\sqrt{(\nabla\cdot\bar{\bu})_{\bm{i}}^2
			+\norm{(\nabla\times\bar{\bu})_{\bm{i}}}^2+10^{-13}}},~0
	\right\}.
\end{equation*}
where $(\nabla\cdot\bar{\bu})_{\bm{i}}$ and $(\nabla\times\bar{\bu})_{\bm{i}}$ are obtained by using central differences,
with $\bar{\bu}_{\bm{i}}$ and $\bar{p}_{\bm{i}}$ the velocity and pressure recovered from the cell average $\overline{\bU}_{\bm{i}}$.
%We consider the sign of the velocity divergence, such that the shock waves can be located better.
The blending coefficient is designed as
\begin{align*}
	&\theta_{\bm{i}+\frac12\bm{e}_{\ell}}^{\texttt{s}} = \exp(-C_{\texttt{s}} (\varphi_1)_{\bm{i}+\frac12\bm{e}_{\ell}} (\varphi_2)_{\bm{i}+\frac12\bm{e}_{\ell}})\in [0, 1],\\
	&(\varphi_s)_{\bm{i}+\frac12\bm{e}_{\ell}} = \max\left\{(\varphi_s)_{\bm{i}}, (\varphi_s)_{\bm{i}+\bm{e}_{\ell}}\right\}, ~s=1,2,
\end{align*}
where the parameter $C_{\texttt{s}}$ adjusts the strength of the limiting, which is fixed as $1$ throughout this paper.
The limited numerical flux after this limiting is
\begin{equation}\label{eq:limited_s}
	\widehat{\bF}_{\bm{i} + \frac12\bm{e}_{\ell}}^{\texttt{Lim},\texttt{s}} 
	= \widehat{\bF}_{\bm{i} + \frac12\bm{e}_{\ell}}^{\texttt{L}}
	+ \theta_{\bm{i} + \frac12\bm{e}_{\ell}}^{\texttt{s}}\Delta\widehat{\bF}_{\bm{i}
		+ \frac12\bm{e}_{\ell}},
\end{equation}
and the corresponding anti-diffusion term is updated as
\begin{equation*}
	\Delta\widehat{\bF}_{\bm{i} + \frac12\bm{e}_{\ell}}^{\texttt{Lim},\texttt{s}}
	= \widehat{\bF}_{\bm{i} + \frac12\bm{e}_{\ell}}^{\texttt{Lim},\texttt{s}}
	- \widehat{\bF}_{\bm{i} + \frac12\bm{e}_{\ell}}^{\texttt{L}}.
\end{equation*}

\subsection{Positivity-preserving limiting}
Similar to the low-order scheme \eqref{eq:llf_xl}, the full cell average update in the AF method can be decomposed into $2d$ intermediate states as follows,
\begin{equation*}
	\overline{\bU}_{\bm{i}}^{\texttt{H}} = \frac{1}{2d}\sum_{\ell=1}^{d}\left( \bU_{\ell,\bm{i}}^{\texttt{H},+} + \bU_{\ell,\bm{i}}^{\texttt{H},-} \right),
\end{equation*}
where
\begin{equation*}
	\begin{aligned}
		&\bU_{\ell,\bm{i}}^{\texttt{H},+} = \overline{\bU}_{\bm{i}} - 2d\mu_{\ell}\left(\widehat{\bF}_{\bm{i} + \frac12\bm{e}_{\ell}}^{\texttt{Lim},\texttt{s}}-\bF_{\ell,\bm{i}}\right), \\
		&\bU_{\ell,\bm{i}}^{\texttt{H},-} = \overline{\bU}_{\bm{i}} - 2d\mu_{\ell}\left(\bF_{\ell,\bm{i}}-\widehat{\bF}_{\bm{i} - \frac12\bm{e}_{\ell}}^{\texttt{Lim},\texttt{s}}\right),
	\end{aligned}
\end{equation*}
with $\widehat{\bF}_{\bm{i} \pm \frac12\bm{e}_{\ell}}^{\texttt{Lim},\texttt{s}}$ given by \eqref{eq:limited_s}.
The superscript $\texttt H$ denotes the high-order candidate after the limiting based on the smoothness indicator.
Define the limited cell average update as
\begin{equation}\label{eq:limited_xl}
	\overline{\bU}_{\bm{i}}^{\texttt{Lim},\texttt{pp}} = \overline{\bU}_{\bm{i}}^{n} - \sum_{\ell=1}^{d}\mu_{\ell} \left(\widehat{\bF}^{\texttt{Lim},\texttt{pp}}_{\bm{i} + \frac12\bm{e}_\ell} - \widehat{\bF}^{\texttt{Lim},\texttt{pp}}_{\bm{i} - \frac12\bm{e}_\ell}\right),
\end{equation}
where the limited numerical flux is given by
\begin{equation*}
	\widehat{\bF}^{\texttt{Lim},\texttt{pp}}_{\bm{i} + \frac12\bm{e}_\ell}
	= \widehat{\bF}^{\texttt{L}}_{\bm{i} + \frac12\bm{e}_\ell}
	+ \theta^{\texttt{pp}}_{\bm{i} + \frac12\bm{e}_\ell} \Delta\widehat{\bF}^{\texttt{Lim},\texttt{s}}_{\bm{i} + \frac12\bm{e}_\ell}.
\end{equation*}
Rewrite the limited cell average as
\begin{equation*}%\label{eq:ns_limited_cell_convex_decomposition}
	\overline{\bU}_{\bm{i}}^{\texttt{Lim},\texttt{pp}} = \frac{1}{2d}\sum_{\ell=1}^{d}\left( \bU_{\ell,\bm{i}}^{\texttt{Lim},\texttt{pp},+} + \bU_{\ell,\bm{i}}^{\texttt{Lim},\texttt{pp},-} \right),
\end{equation*}
with the limited intermediate states
\begin{align*}
	&\bU_{\ell,\bm{i}}^{\texttt{Lim},\texttt{pp},+}=\bU_{\ell,\bm{i}}^{\texttt{L},+}-2d\mu_{\ell}\theta^{\texttt{pp}}_{\bm{i} + \frac12\bm{e}_\ell}\Delta\widehat{\bF}^{\texttt{Lim},\texttt{s}}_{\bm{i} + \frac12\bm{e}_\ell} =(1-\theta^{\texttt{pp}}_{\bm{i} + \frac12\bm{e}_\ell})\bU_{\ell,\bm{i}}^{\texttt{L},+}+\theta^{\texttt{pp}}_{\bm{i} + \frac12\bm{e}_\ell}\bU_{\ell,\bm{i}}^{\texttt{H},+}, \\
	&\bU_{\ell,\bm{i}}^{\texttt{Lim},\texttt{pp},-}=\bU_{\ell,\bm{i}}^{\texttt{L},-}+2d\mu_{\ell}\theta^{\texttt{pp}}_{\bm{i} - \frac12\bm{e}_\ell}\Delta\widehat{\bF}^{\texttt{Lim},\texttt{s}}_{\bm{i} - \frac12\bm{e}_\ell} =(1-\theta^{\texttt{pp}}_{\bm{i} - \frac12\bm{e}_\ell})\bU_{\ell,\bm{i}}^{\texttt{L},-}+\theta^{\texttt{pp}}_{\bm{i} - \frac12\bm{e}_\ell}\bU_{\ell,\bm{i}}^{\texttt{H},-},
\end{align*}
where the blending parameter $\theta^{\texttt{pp}}_{\bm{i} \pm \frac12\bm{e}_\ell}\in[0,1]$.
With the above construction, one can verify the following proposition.

\begin{proposition}\rm
	If the limited intermediate states $\bU_{\ell,\bm{i}}^{\texttt{Lim},\texttt{pp},\pm} \in \mathcal{G}$ for all $\bm{i}$ and $\ell=1,\ldots,d$,
	then the limited average update \eqref{eq:limited_xl} is PP,
	i.e., $\overline{\bU}^{\texttt{Lim},\texttt{pp}}_{\bm{i}} \in \mathcal{G}$, under the CFL condition \eqref{eq:ns_1d_monolithic_cfl}.
	If the limiting is applied to every forward-Euler stage, the SSP-RK method preserves the PP property through its convex stage combinations.
\end{proposition}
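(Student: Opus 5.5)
The plan has two parts: a per-step convexity argument for the limited cell average update, and an SSP argument over the Runge--Kutta stages.

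\emph{Step 1: the limited update is the stated average of intermediate states.} I would verify directly that
\begin{equation*}
\overline{\bU}^{\texttt{Lim},\texttt{pp}}_{\bm i}=\frac{1}{2d}\sum_{\ell=1}^{d}\left(\bU^{\texttt{Lim},\texttt{pp},+}_{\ell,\bm i}+\bU^{\texttt{Lim},\texttt{pp},-}_{\ell,\bm i}\right).
\end{equation*}
Summing $\bU^{\texttt{Lim},\texttt{pp},+}_{\ell,\bm i}+\bU^{\texttt{Lim},\texttt{pp},-}_{\ell,\bm i}$ gives $2\overline{\bU}_{\bm i}-2d\mu_\ell\bigl(\widehat{\bF}^{\texttt{Lim},\texttt{pp}}_{\bm i+\frac12\bm e_\ell}-\widehat{\bF}^{\texttt{Lim},\texttt{pp}}_{\bm i-\frac12\bm e_\ell}\bigr)$, because the terms $\pm 2d\mu_\ell\bF_{\ell,\bm i}$ cancel. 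Here I use that $\bU^{\texttt{Lim},\texttt{pp},\pm}_{\ell,\bm i}$ equals $\overline{\bU}_{\bm i}\mp2d\mu_\ell(\cdots)$ with the low-order flux replaced by the limited flux $\widehat{\bF}^{\texttt{L}}+\theta^{\texttt{pp}}\Delta\widehat{\bF}^{\texttt{Lim},\texttt{s}}$. Averaging over $\ell$ with weight $1/(2d)$ then recovers \eqref{eq:limited_xl}.

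\emph{Step 2: membership in $\mathcal G$.} The weights $1/(2d)$ are nonnegative and sum to one, since there are $2d$ states. By hypothesis every limited intermediate state lies in $\mathcal G$. Because $\mathcal G$ is convex, $\overline{\bU}^{\texttt{Lim},\texttt{pp}}_{\bm i}\in\mathcal G$.

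\emph{Step 3: where the CFL condition enters.} The hypothesis on the limited states is attainable only because, under \eqref{eq:ns_1d_monolithic_cfl}, Proposition~\ref{thm:ns_1d_intermediate_positive} gives $\bU^{\texttt{L},\pm}_{\ell,\bm i}\in\mathcal G$. Then $\theta^{\texttt{pp}}=0$ is always admissible, and one can choose $\theta^{\texttt{pp}}\in[0,1]$ so that the convex blend of $\bU^{\texttt{L},\pm}$ and $\bU^{\texttt{H},\pm}$ stays in $\mathcal G$. One technical point needs care: a single interface parameter $\theta^{\texttt{pp}}_{\bm i+\frac12\bm e_\ell}$ is shared by $\bU^{\texttt{Lim},\texttt{pp},+}_{\ell,\bm i}$ and $\bU^{\texttt{Lim},\texttt{pp},-}_{\ell,\bm i+\bm e_\ell}$. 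I would take the minimum of the two admissible values. This works because the admissible set of $\theta$ for each state is an interval containing $0$: $\mathcal G$ is convex and the blend is affine in $\theta$.

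\emph{Step 4: SSP-RK.} I would write each stage of SSP-RK54 as a convex combination, with nonnegative coefficients, of forward-Euler steps applied to earlier stages, each with step size $\Delta t$ scaled by the SSP coefficient. If the limiter is applied within every forward-Euler step and the CFL condition holds with the effective step, Steps 1–3 show each forward-Euler output lies in $\mathcal G$. Convexity of $\mathcal G$ then gives each stage, and the final update, in $\mathcal G$.

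The main obstacle is bookkeeping in this step. SSP-RK54 has some coefficients whose effective forward-Euler step sizes differ, so the CFL condition \eqref{eq:ns_1d_monolithic_cfl} must be understood with $\Delta t$ replaced by the effective step in each stage. Also, the viscosity coefficients $\lambda$ must be recomputed from each stage's data. I would state these points explicitly rather than prove new estimates.
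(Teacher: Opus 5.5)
Your proposal is correct and follows the argument the paper intends (the paper states the result with only ``one can verify''): the limited update equals the $\frac{1}{2d}$-average of the $2d$ limited intermediate states, so convexity of $\mathcal G$ gives $\overline{\bU}^{\texttt{Lim},\texttt{pp}}_{\bm i}\in\mathcal G$, and writing SSP-RK54 as convex combinations of forward-Euler steps, with the limiter applied at each effective step, extends this to the full RK step. Your observation that \eqref{eq:ns_1d_monolithic_cfl} is not used in the convexity step itself, but only to make the hypothesis attainable (through $\bU^{\texttt{L},\pm}_{\ell,\bm i}\in\mathcal G$, so that $\theta^{\texttt{pp}}=0$ is always admissible), is an accurate sharpening of the statement.
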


The remaining task is to determine the parameter $\theta^{\texttt{pp}}_{\bm{i}\pm\frac12\bm{e}_{\ell}} \in [0,1]$ at each interface,
such that $\bU_{\ell,\bm{i}}^{\texttt{Lim},\texttt{pp},\pm} \in \mathcal{G}$ and $\theta^{\texttt{pp}}_{\bm{i}\pm\frac12\bm{e}_{\ell}}$ is as large as permitted.
Take the computation of $\theta_{\bm{i}+\frac12\bm{e}_{\ell}}$ as an example.
Define
\begin{equation*}
	\Theta
	\left(
	z^{\texttt{L}}, z^{\texttt{H}}, \varepsilon
	\right)
	=
	\begin{cases}
		\dfrac{z^{\texttt{L}} - \varepsilon}
		{z^{\texttt{L}}-z^{\texttt{H}}},
		&
		z^{\texttt{H}} < \varepsilon,
		\\
		1,
		&
		\text{otherwise},
	\end{cases}
\end{equation*}
for $z^{\texttt{L}} \geqslant \varepsilon$.
By construction, it holds $(1-\vartheta)z^{\texttt{L}} + \vartheta z^{\texttt{H}} \geqslant \varepsilon$ for all $\vartheta \in [0, \Theta(z^{\texttt{L}},z^{\texttt{H}}, \varepsilon)]$.

The first step is to impose the density positivity
\begin{equation*}
	\rho(\widetilde{\bU}_{\ell,\bm{i}}^{\texttt{Lim},\rho,+}) > \varepsilon^\rho_{\ell,\bm{i}}, \quad
	\rho(\widetilde{\bU}_{\ell,\bm{i}+\bm{e}_{\ell}}^{\texttt{Lim},\rho,-}) > \varepsilon^\rho_{\ell,\bm{i}+\bm{e}_{\ell}},
\end{equation*}
such that the following intermediate states have positive density component,
\begin{equation*}
	\widetilde{\bU}_{\ell,\bm{i}}^{\texttt{Lim},\rho,\pm}
	= (1 - \theta^\rho_{\bm{i} \pm \frac12\bm{e}_\ell})\bU_{\ell,\bm{i}}^{\texttt{L},\pm}
	+ \theta^\rho_{\bm{i} \pm \frac12\bm{e}_\ell} \bU_{\ell,\bm{i}}^{\texttt{H},\pm},
\end{equation*}
where $\varepsilon^\rho_{\ell,\bm{i}}$ is a small positive number defined by $\varepsilon^\rho_{\ell,\bm{i}} := \min\{10^{-13},  \rho(\bU_{\ell,\bm{i}}^{\texttt{L},\pm}) \}$.
The blending parameter for the density is chosen as
\begin{equation*}%\label{eq:blending_parameter_rho}
	\theta^{\rho}_{\bm{i} + \frac12\bm{e}_{\ell}}
	=
	\min\left\{
	\Theta\left(
	\rho(\bU_{\ell,\bm{i}}^{\texttt{L},+}),
	\rho(\bU_{\ell,\bm{i}}^{\texttt{H},+}),
	\varepsilon^\rho_{\ell,\bm{i}}
	\right),
	\Theta\left(
	\rho(\bU_{\ell,\bm{i} + \bm{e}_{\ell}}^{\texttt{L},-}),
	\rho(\bU_{\ell,\bm{i} + \bm{e}_{\ell}}^{\texttt{H},-}),
	\varepsilon^\rho_{\ell,\bm{i} + \bm{e}_{\ell}}
	\right)
	\right\},
\end{equation*}
which considers the two cells sharing the same interface.

The second step is to impose the pressure positivity
\begin{equation*}
	p({\bU}_{\ell,\bm{i}}^{\texttt{Lim},\texttt{pp},+}) > \varepsilon^p_{\ell,\bm{i}}, \quad
	p({\bU}_{\ell,\bm{i}+\bm{e}_{\ell}}^{\texttt{Lim},\texttt{pp},-}) > \varepsilon^p_{\ell,\bm{i}+\bm{e}_{\ell}},
\end{equation*}
where $\varepsilon^p_{\ell,\bm{i}}$ is a small positive number defined by $\varepsilon^p_{\ell,\bm{i}} := \min\{10^{-13},  p(\bU_{\ell,\bm{i}}^{\texttt{L},\pm}) \}$.
The blending parameter for the pressure is chosen as
\begin{equation*}%\label{eq:blending_parameter_p}
	\theta^{p}_{\bm{i} + \frac12\bm{e}_{\ell}}
	=
	\min\left\{
	\Theta\left(
	p(\bU_{\ell,\bm{i}}^{\texttt{L},+}),
	p(\widetilde{\bU}_{\ell,\bm{i}}^{\texttt{Lim},\rho,+}),
	\varepsilon^p_{\ell,\bm{i}}
	\right),
	\Theta\left(
	p(\bU_{\ell,\bm{i} + \bm{e}_{\ell}}^{\texttt{L},-}),
	p(\widetilde{\bU}_{\ell,\bm{i} + \bm{e}_{\ell}}^{\texttt{Lim},\rho,-}),
	\varepsilon^p_{\ell,\bm{i} + \bm{e}_{\ell}}
	\right)
	\right\},
\end{equation*}
which uses the concavity of the pressure function.
Thus the final limited intermediate states and numerical flux are
\begin{align*}
	&{\bU}_{\ell,\bm{i}}^{\texttt{Lim},\texttt{pp},\pm}
	= (1 - \theta^\rho_{\bm{i} \pm \frac12\bm{e}_\ell}\theta^p_{\bm{i} \pm \frac12\bm{e}_\ell})\bU_{\ell,\bm{i}}^{\texttt{L},\pm}
	+ \theta^\rho_{\bm{i} \pm \frac12\bm{e}_\ell}\theta^p_{\bm{i} \pm \frac12\bm{e}_\ell} \bU_{\ell,\bm{i}}^{\texttt{H},\pm}, \\
	&\widehat{\bF}^{\texttt{Lim},\texttt{pp}}_{\bm{i} + \frac12\bm{e}_\ell} 
	= \widehat{\bF}^{\texttt{L}}_{\bm{i} + \frac12\bm{e}_\ell}
	+ \theta^\rho_{\bm{i} + \frac12\bm{e}_\ell}\theta^p_{\bm{i} + \frac12\bm{e}_\ell} \Delta\widehat{\bF}^{\texttt{Lim},\texttt{s}}_{\bm{i} + \frac12\bm{e}_\ell}.
\end{align*}

\begin{remark}
	Note that the cell-centered value should also be limited using the scaling limiter with the help of the cell average,
	see \cite{Duan_2025_Active_SJSC} for the details.
\end{remark}

\subsection{Limiting for the point value}
To achieve the PP property, a simple scaling limiter is adopted.
One can easily write down a low-order PP LLF method for the point value based on the spatial stencil used in \cite{Duan_2025_Active_SJSC} and the numerical flux defined in \eqref{eq:ns_low_face_fluxes}.
Then the high-order AF solution is blended with the low-order one to preserve positivity, with the blending parameters determined similarly to those for cell averages.

Let us summarize the main results of the proposed PP AF method.
\begin{theorem}\rm
	If the initial numerical solution $\overline{\bU}_{\bm{i}}^0, \bU_{\bm{\zeta}}^0\in\mathcal{G}$ for all $\bm{i}$ and $\bm{\zeta}$,
	and the time-step size satisfies \eqref{eq:ns_1d_monolithic_cfl} for the cell average and also the one for the point value,
	then the AF method \eqref{eq:af_av_1d}--\eqref{eq:af_pnt_1d} equipped with an SSP Runge--Kutta method
	and the PP limiters for cell averages \eqref{eq:limited_xl} and point values preserves density and pressure positivity.
\end{theorem}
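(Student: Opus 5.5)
The plan is an induction over time steps and, within each step, over the forward-Euler stages of the SSP Runge--Kutta method. SSP-RK54 can be written as a sequence of convex combinations of forward-Euler updates applied to earlier stage values. Since $\mathcal G$ is convex, positivity of the final solution follows once every forward-Euler stage, started from admissible cell averages and point values, returns admissible cell averages and point values. The stage step sizes must also respect the step-size restrictions. Because the SSP coefficient of SSP-RK54 scales the effective forward-Euler step, I would state \eqref{eq:ns_1d_monolithic_cfl} per stage, that is, with $\Delta t$ replaced by the effective stage step size.

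\textbf{Cell averages.} For a single forward-Euler stage with all $\overline{\bU}_{\bm i}\in\mathcal G$, Proposition~\ref{thm:ns_1d_intermediate_positive} gives $\bU^{\texttt L,\pm}_{\ell,\bm i}\in\mathcal G$ under \eqref{eq:ns_1d_monolithic_cfl}. It remains to show that the limited intermediate states lie in $\mathcal G$; the preceding proposition then yields $\overline{\bU}^{\texttt{Lim},\texttt{pp}}_{\bm i}\in\mathcal G$.

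First take density. By construction of $\Theta$, any $\vartheta\in[0,\theta^\rho]$ gives a blend with density at least $\varepsilon^\rho>0$. This holds because density is linear in $\bU$ and $\rho(\bU^{\texttt L,\pm})\geqslant\varepsilon^\rho$.

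Next take pressure. Fix $\vartheta=\theta^\rho\theta^p$. The limited state is the convex combination $(1-\theta^p)\bU^{\texttt L}+\theta^p\widetilde{\bU}^{\texttt{Lim},\rho}$ of two states with positive density. On the set $\{\rho>0\}$ the pressure is concave, so $p$ of this combination is bounded below by the corresponding linear interpolation of $p(\bU^{\texttt L})$ and $p(\widetilde{\bU}^{\texttt{Lim},\rho})$. That interpolation is at least $\varepsilon^p$ by the defining property of $\Theta$ when $\theta^p\leqslant\Theta(\cdot)$. One must also check that shrinking $\vartheta$ to $\theta^\rho\theta^p$ preserves the density bound, which holds since $[0,\theta^\rho]$ is an interval containing $0$.

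Since $\theta$ is taken as the minimum over the two cells sharing an interface, the single interface flux serves both adjacent intermediate states. This preserves conservation and positivity simultaneously.

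\textbf{Point values.} The scaling limiter blends the high-order point update with a low-order LLF point update. I would show, analogously to Proposition~\ref{thm:ns_1d_intermediate_positive}, that the low-order point update is a convex combination of admissible states of the form $\bU\pm\bF_\ell/\lambda_\ell$. This uses Lemma~\ref{lem:ns_total_flux_split} applied to the stencil of point values, under the point-value CFL condition. The same density-then-pressure blending argument as for cell averages then applies to each point value.

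The cell-centred values $\bU_{\bm i}$ used in the fluxes must also be admissible so that $\varrho$, $b_\ell$ and the splitting lemmas are well defined. I would rely on the remark's scaling limiter toward $\overline{\bU}_{\bm i}$ for this.

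\textbf{Main obstacle.} I expect the hardest step to be the point-value part, since its low-order scheme is only sketched. I would make its convex decomposition explicit, mimicking \eqref{eq:ns_1d_monolithic_intermediate_states} with the half-cell stencil. The coefficient bound from that decomposition then defines the point-value CFL condition that the theorem assumes.
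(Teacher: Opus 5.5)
Your proposal is correct and follows the paper's own route: convexity of the SSP-RK stages over forward-Euler updates, Proposition~\ref{thm:ns_1d_intermediate_positive} for the low-order intermediate states, the $\Theta$-based density step followed by the pressure step via concavity of $p$ on $\{\rho>0\}$ (with the per-interface minimum so one flux serves both neighbouring intermediate states), and an analogous low-order LLF blend for point values that relies on limited cell-centred values. Your extra care about effective stage step sizes and about the admissibility of the cell-centred values only makes explicit what the paper leaves implicit.
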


\begin{remark}\label{rmk:ns_time_step}
	In the actual implementation, the time-step size is first chosen based on the CFL condition \eqref{eq:cfl}.
	If the low-order counterparts needed in the proposed limiting are not PP, then the solution is reset and the full RK step is recomputed with a halved time-step size.
\end{remark}

\section{Numerical experiments}\label{sec:results}

This section presents numerical experiments for the compressible NS equations \eqref{eq:ns},
including smooth accuracy tests, problems with strong shocks and low pressure, and viscous flows at low Mach numbers.
The ratio of specific heats is $\gamma=1.4$ unless specified otherwise.

\begin{example}[1D viscous shock wave]\label{ex:1d_accuracy}
	Consider the moving viscous shock used in \cite{Lin_2023_positivity_JCP} on the domain $[-1,1.5]$.
	Let $u_\infty$ denote the translation velocity and $\xi=x-u_\infty t$.
	Let $w(\xi)$ denote the velocity relative to the moving shock.
	The exact primitive variables are $\rho(\xi)$, $u(x,t)=u_\infty+w(\xi)$, and $p(\xi)$, where
	\begin{equation*}
		\rho(\xi)=\frac{m_0}{w(\xi)},\quad p(\xi)=(\gamma-1)\rho(\xi)e(\xi), \quad e(\xi)=\frac{1}{2\gamma}\left(\frac{\gamma+1}{\gamma-1}w_0^2-w(\xi)^2\right).
	\end{equation*}
	Let $w_L$ and $w_R$ be the velocities relative to the shock at $-\infty$ and $+\infty$, respectively.
	The velocity $w=w(\xi)$ is determined by
	\begin{equation*}
		\xi=\frac{2\mu\gamma}{(\gamma+1)m_0\Pr}\left[\frac{w_L}{w_L-w_R}\log\left(\frac{w_L-w}{w_L-w_0}\right)-\frac{w_R}{w_L-w_R}\log\left(\frac{w-w_R}{w_0-w_R}\right)\right],
	\end{equation*}
	which was derived by Becker \cite{Becker_1923_Stosswelle_P}.
	Here, $m_0=\rho_Lw_L$, $w_0=\sqrt{w_Lw_R}$.
	The ratio of the upstream and downstream relative velocities is determined by the upstream Mach number in the shock frame,
	\begin{equation*}
		\frac{w_R}{w_L}=\frac{(\gamma-1)+2/M_0^2}{\gamma+1},\quad M_0=\frac{w_L}{\sqrt{\gamma p_L/\rho_L}}.
	\end{equation*}
	Take $\Pr=3/4$,	$u_\infty=0.2$, $\rho_L=w_L=m_0=1$, and the initial shock is located at $x=0$.
	For the first case, take $M_0=3$ and $\mu=10^{-2}$ for the accuracy test without limiting,
	whereas the second case uses $M_0=20$ and $\mu=10^{-3}$ for the accuracy test with limiting.
	Both cases are evolved to $t_{\mathrm{final}}=0.2$.
	Boundary values at both ends of the domain are prescribed using the time-dependent exact solution.
	
	Figures~\ref{fig:1d_accuracy_mach3} and \ref{fig:1d_accuracy_mach20} show the numerical solutions and the $\ell_1$ errors and convergence rates for the first and second cases, respectively.
	The AF method accurately captures the exact solution
	and achieves fourth-order convergence under mesh refinement.
	
	\begin{figure}[htb!]
		\centering
		\includegraphics[width=0.48\textwidth]{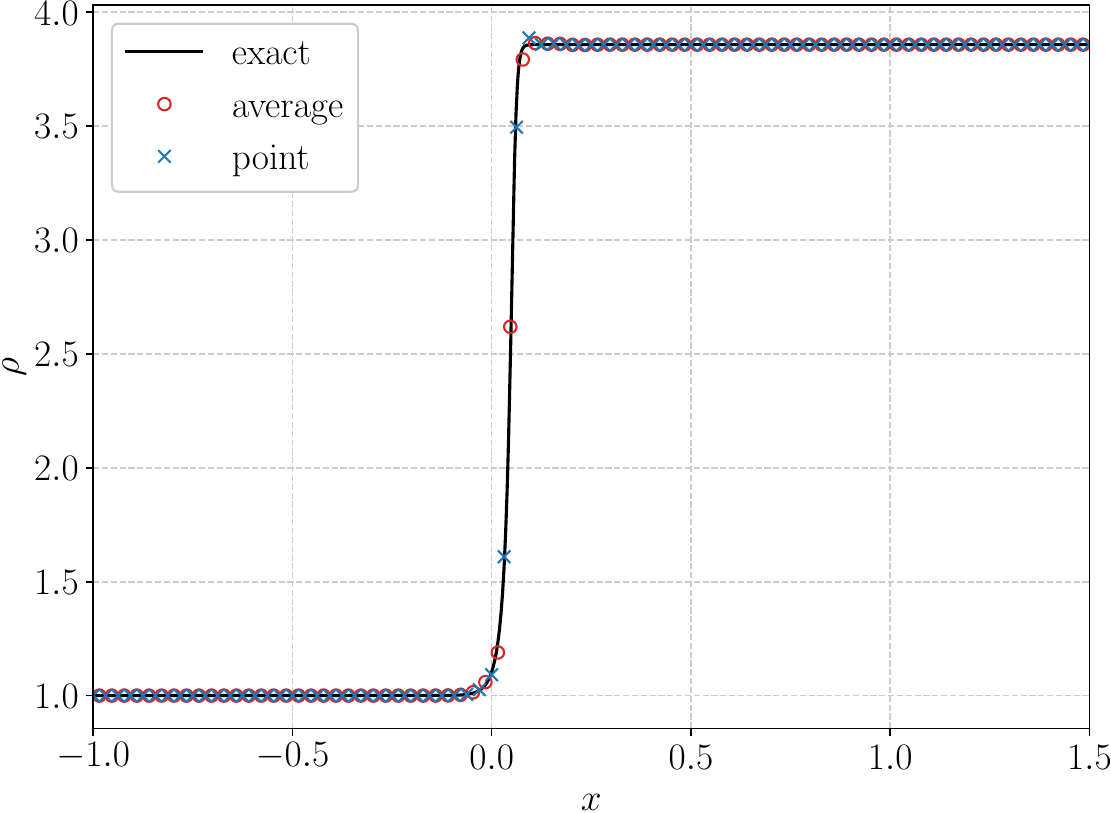}~
		\includegraphics[width=0.475\textwidth]{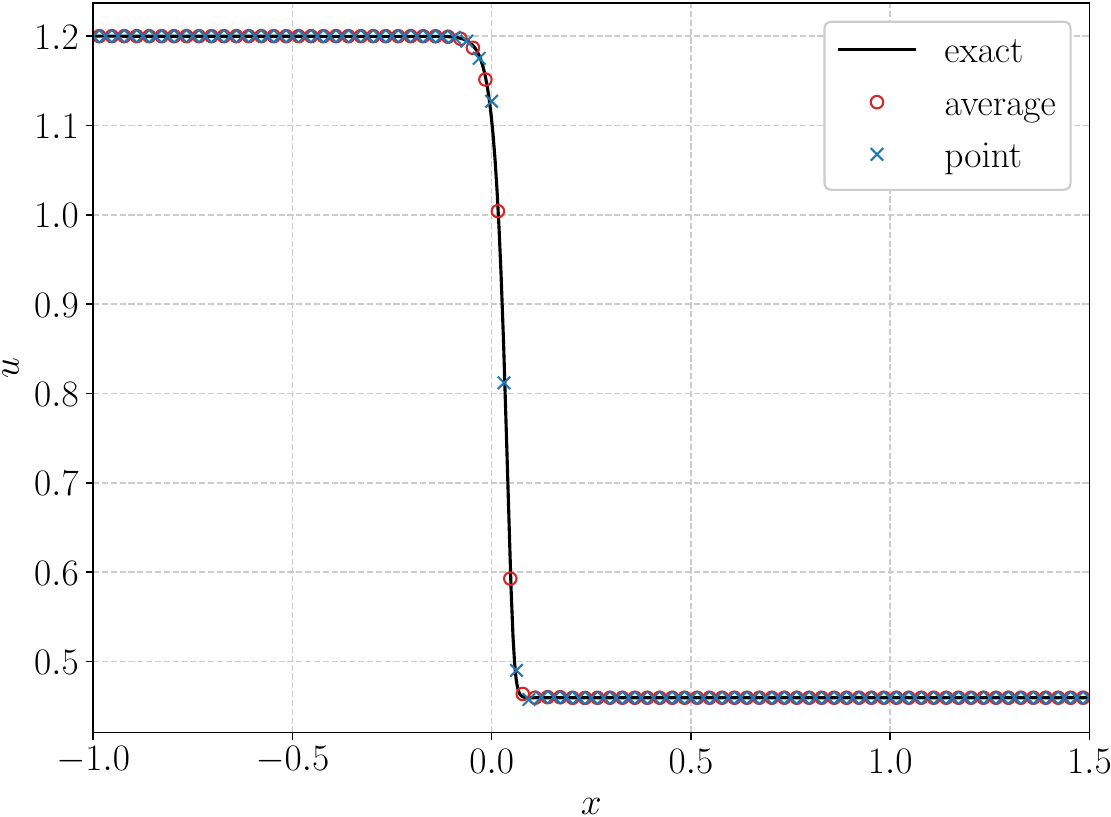}
		
		\vspace{2pt}
		
		\includegraphics[width=0.48\textwidth]{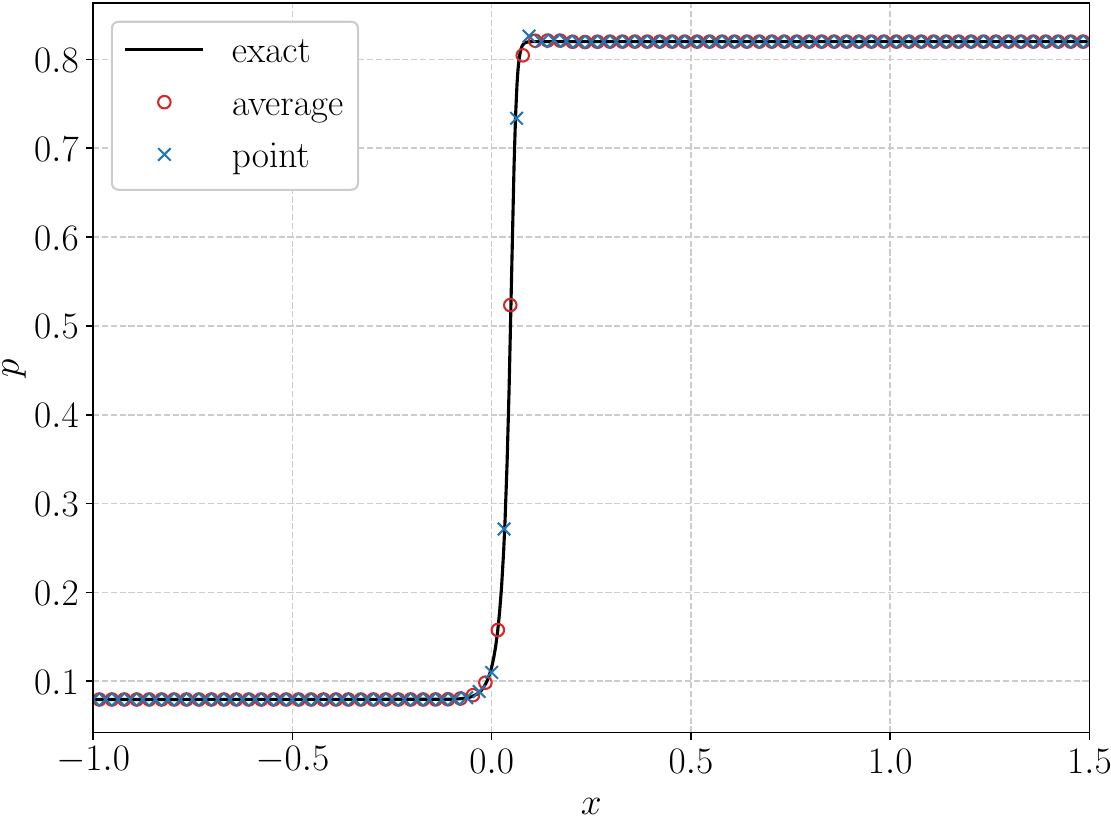}~
		\includegraphics[width=0.485\textwidth]{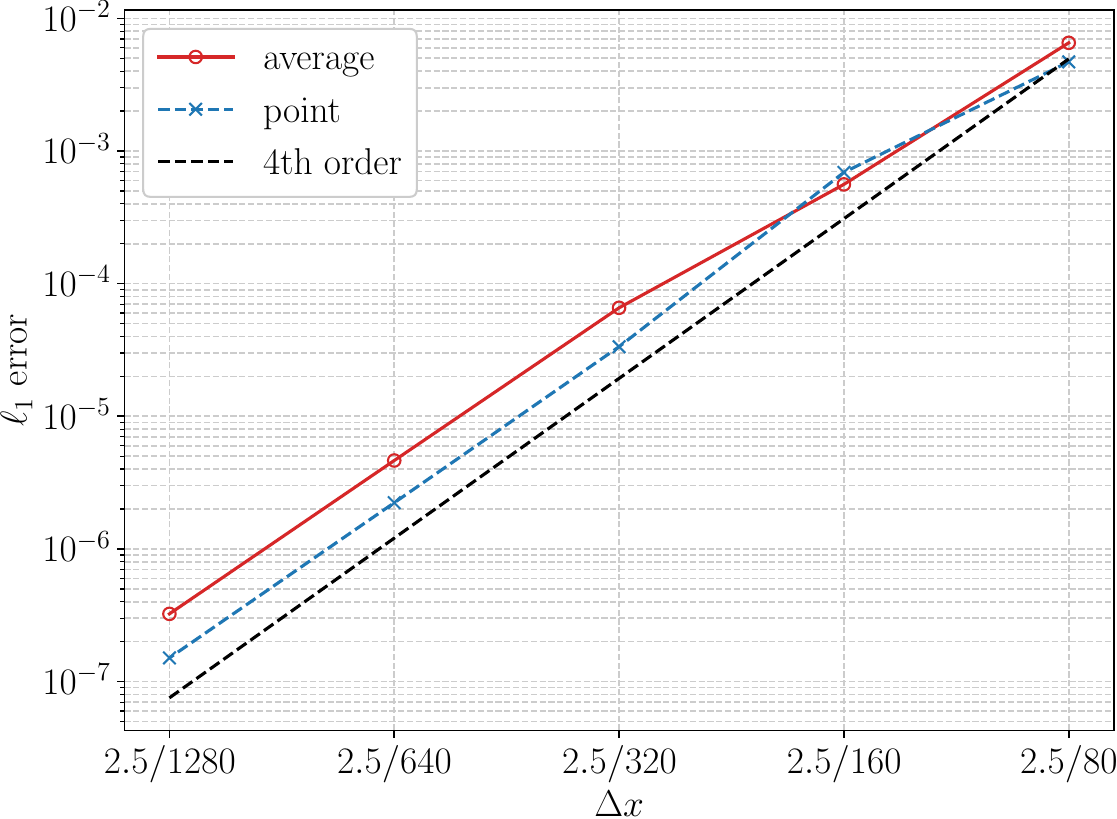}
		\caption{Example~\ref{ex:1d_accuracy}: viscous shock with $M_0=3$.
			The numerical solutions with $80$ cells and the $\ell_1$ errors and convergence rates of the AF method.}
		\label{fig:1d_accuracy_mach3}
	\end{figure}
	
	\begin{figure}[htb!]
		\centering
		\includegraphics[width=0.48\textwidth]{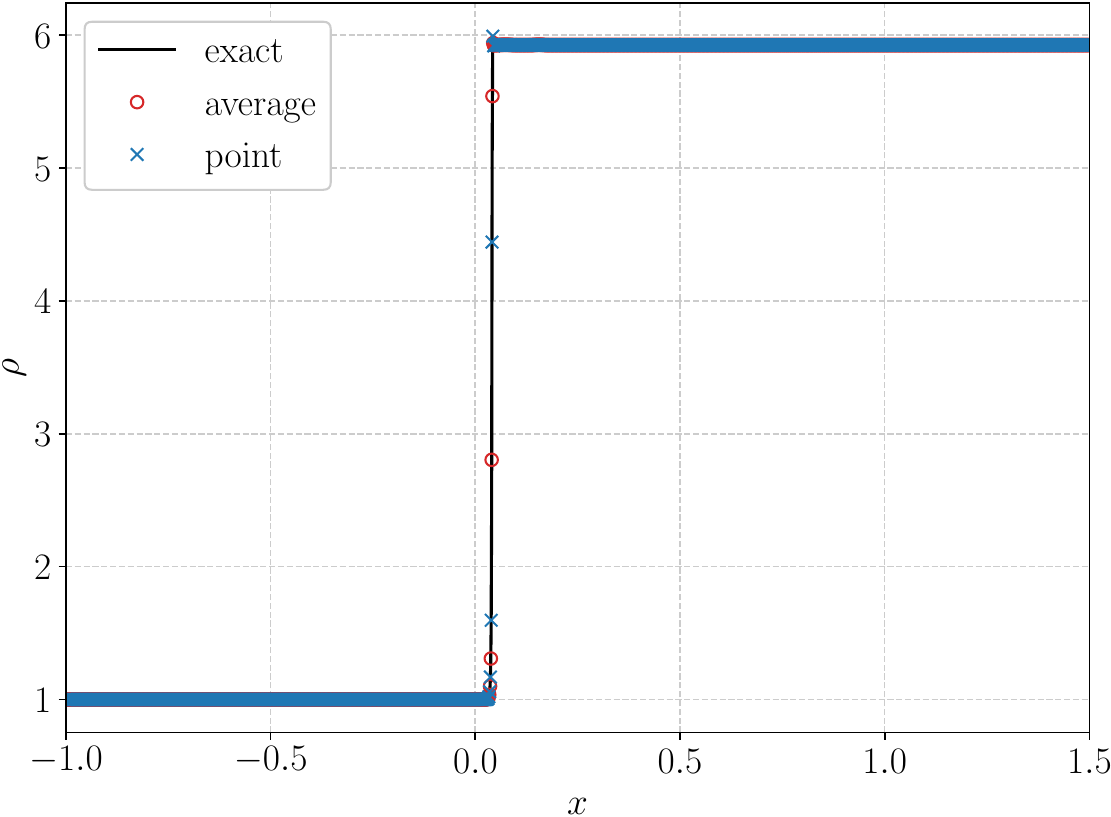}~
		\includegraphics[width=0.475\textwidth]{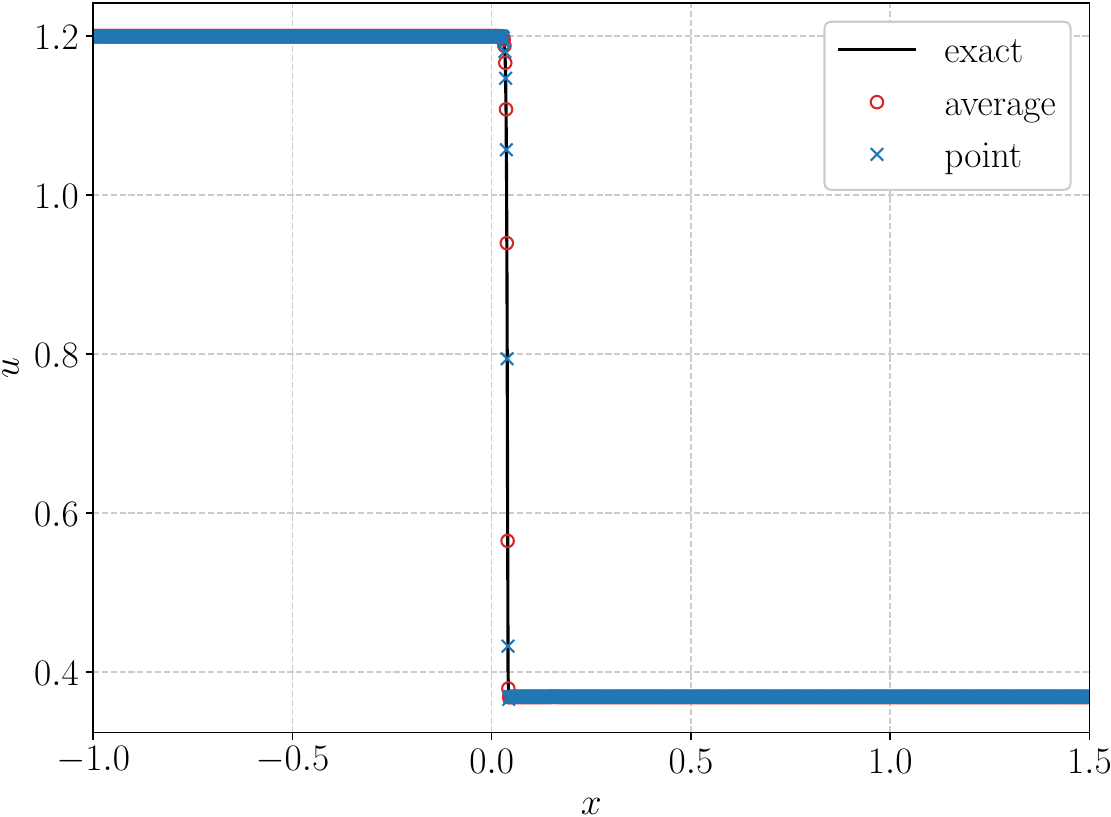}
		
		\vspace{2pt}
		
		\includegraphics[width=0.48\textwidth]{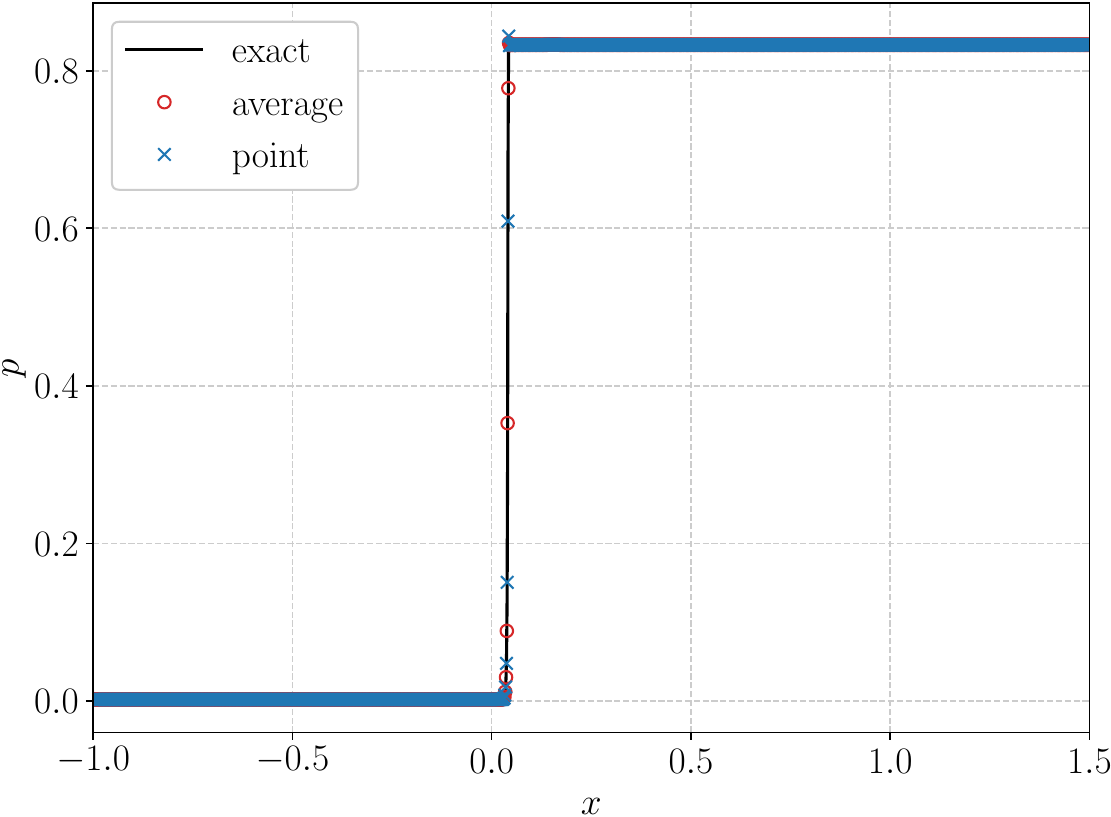}~
		\includegraphics[width=0.485\textwidth]{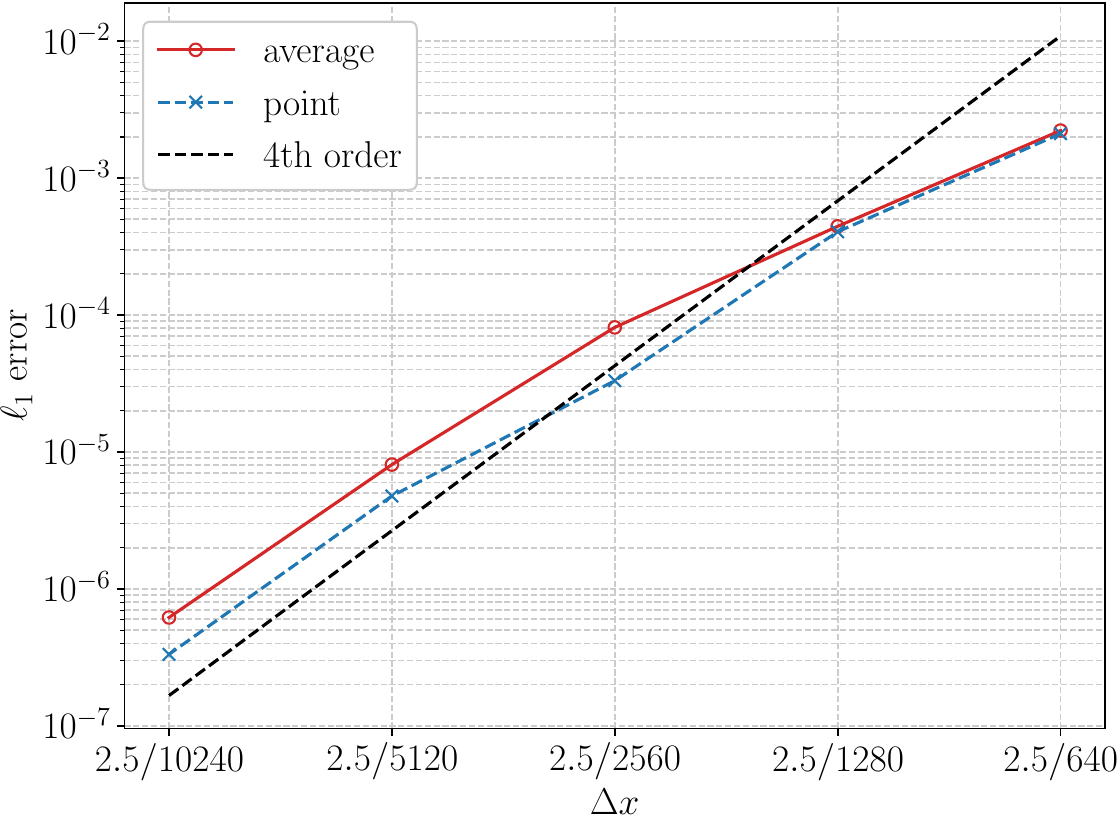}
		\caption{Example~\ref{ex:1d_accuracy}: viscous shock with $M_0=20$.
			The numerical solutions with $1280$ cells and the $\ell_1$ errors and convergence rates of the AF method.}
		\label{fig:1d_accuracy_mach20}
	\end{figure}
\end{example}

\begin{example}[2D manufactured Taylor--Green vortex]\label{ex:2d_taylor_green}
	Consider the periodic domain $[0,2\pi]\times[0,2\pi]$.
	Let $a(t)=\exp(-2\mu t)$ and $b(x,y)=\cos(2x)+\cos(2y)$.
	The prescribed solution is
	\begin{equation*}
		\rho=1,~
		\bu=\left(a\sin x\cos y\\-a\cos x\sin y\right)^\top,~
		p=\frac{p_0}{\gamma-1}+\frac{a^2b}{4},
	\end{equation*}
	with $p_0=10^2$.
	The mass and momentum equations are satisfied automatically.
	For the energy equation, one has to add the following manufactured energy source
	\begin{equation*}
		S_E=\frac{p_t+u p_x+v p_y}{\gamma-1}
		-4\mu a^2\cos^2x\cos^2y-\kappa\Delta p,
	\end{equation*}
	where $p_t=-\mu a^2b$, $p_x=-a^2\sin(2x)/2$,
	$p_y=-a^2\sin(2y)/2$, $\Delta p=-a^2b$.
	The viscosity is $\mu=10^{-5}$,	with $\Pr=1$ and $t_{\texttt{final}}=0.2$.
	
	The numerical solutions obtained by the proposed AF method with $80\times80$ cells are shown in Figure~\ref{fig:2d_taylor_green}.
	It is observed that the results agree well with the exact solution.
	The errors and convergence rates are presented in Figure~\ref{fig:2d_taylor_green_error},
	which clearly show fourth-order convergence.
	
	\begin{figure}[htb!]
		\centering
		\includegraphics[height=0.42\textwidth]{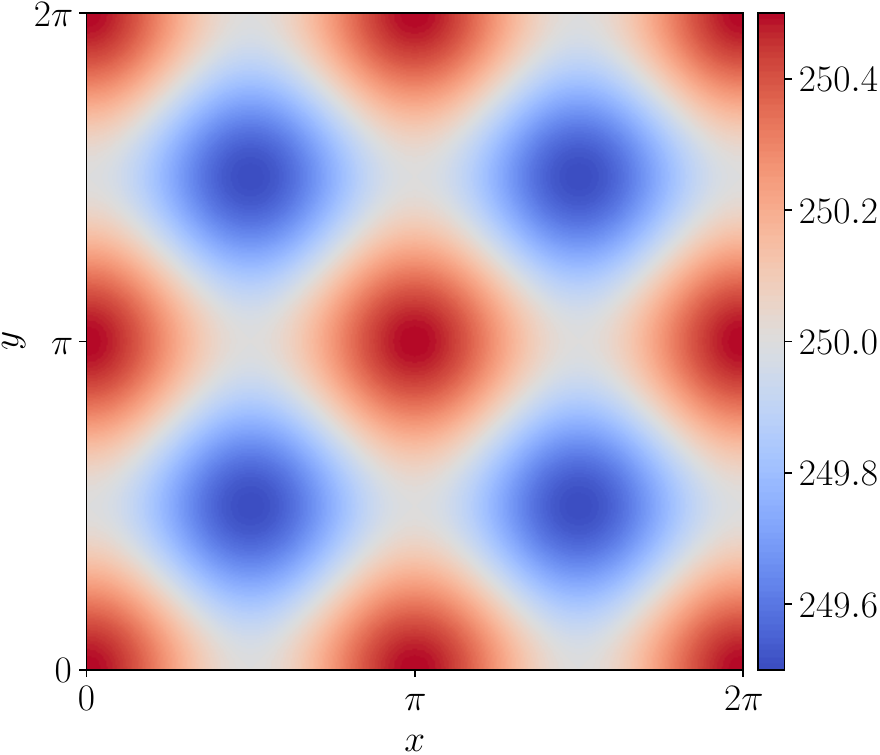}~
		\includegraphics[height=0.42\textwidth]{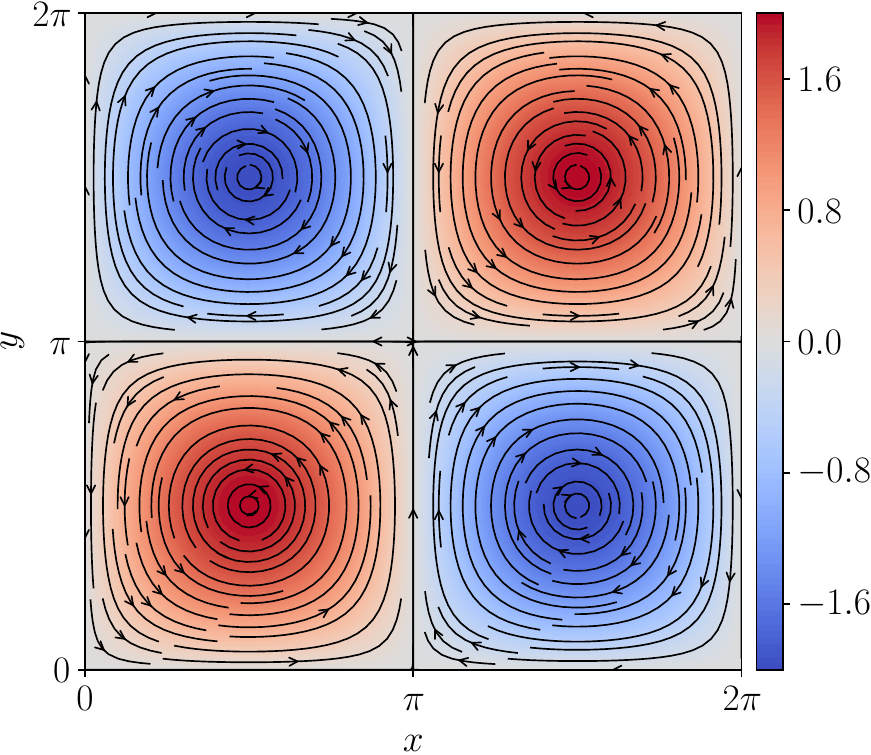}
		
		\vspace{2pt}
		
		\includegraphics[width=0.48\textwidth]{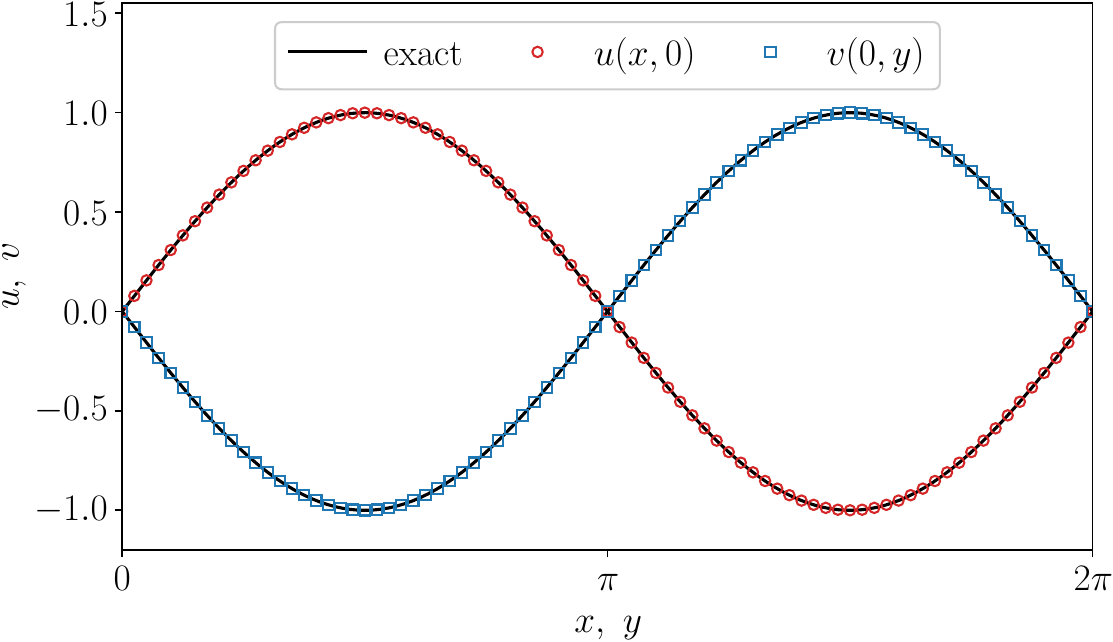}
		\includegraphics[width=0.48\textwidth]{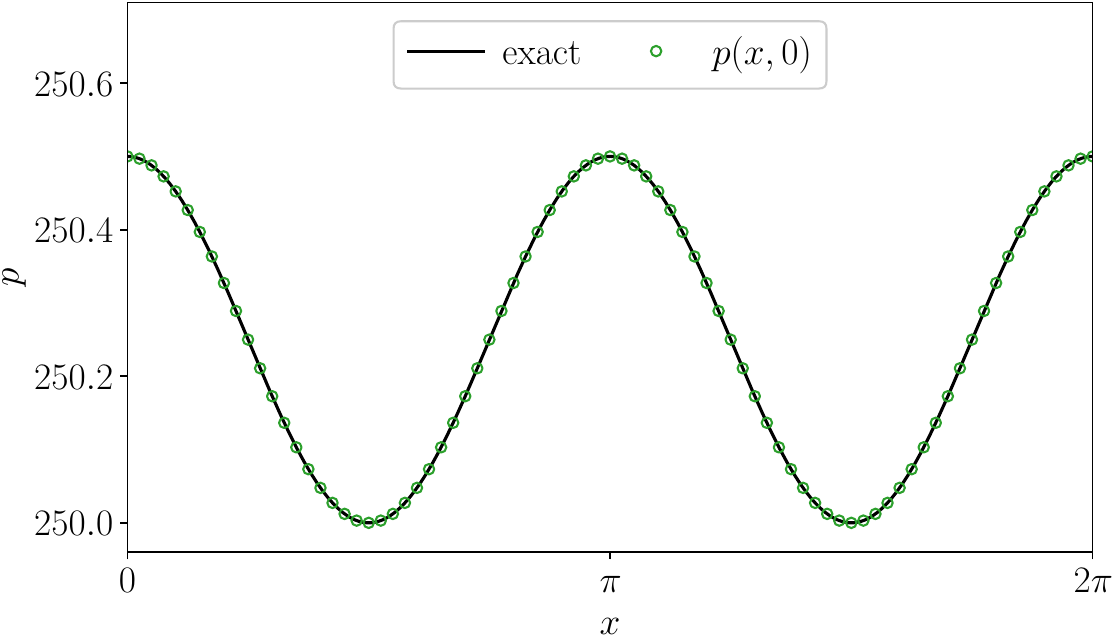}
		\caption{Example~\ref{ex:2d_taylor_green}: manufactured Taylor--Green vortex.
			The pressure, streamlines colored by the vorticity, velocity cut-lines, and pressure cut-line (from upper left to lower right).
		}
		\label{fig:2d_taylor_green}
	\end{figure}
	
	\begin{figure}[htb!]
		\centering
		\includegraphics[width=0.45\textwidth]{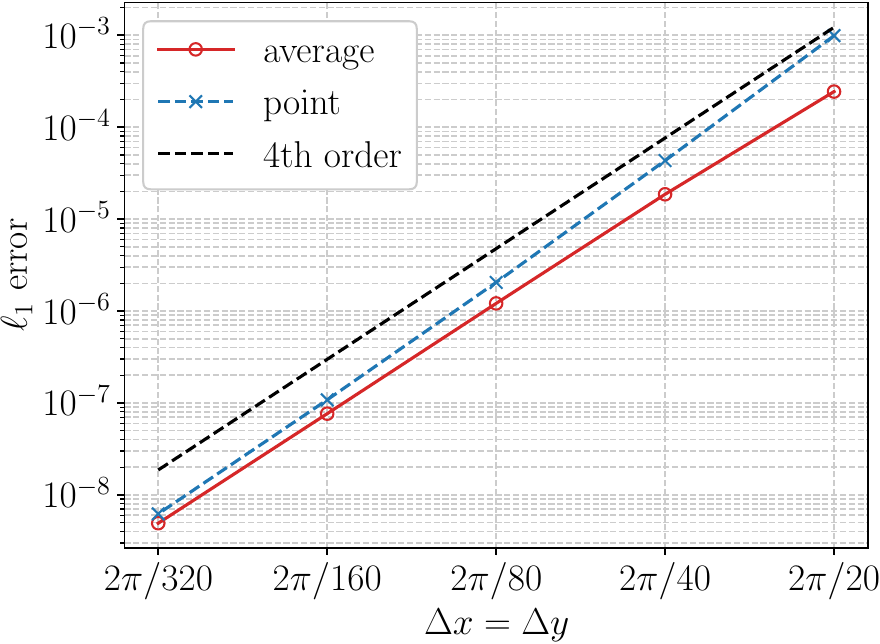}
		\caption{Example~\ref{ex:2d_taylor_green}: manufactured Taylor--Green vortex.
			The errors and convergence rates.}
		\label{fig:2d_taylor_green_error}
	\end{figure}
\end{example}

\begin{example}[First problem of Stokes]\label{ex:1d_stokes}
	This test considers the diffusion of a tangential velocity discontinuity,
	where the solution depends only on $x$, while the transverse velocity is retained.
	The computational domain is $[-0.5,0.5]$ with outflow boundary conditions, and the initial condition is
	\begin{equation*}
		(\rho,u,v,p) =
		\begin{cases}
			(1, ~0, ~-0.1, ~1/\gamma),&\text{if}~ x<0,\\
			(1, ~0, ~0.1, ~1/\gamma),&\text{otherwise}.
		\end{cases}
	\end{equation*}
	We consider $\mu=10^{-2}$, $10^{-3}$, and $10^{-4}$, with $\kappa=0$ in all three cases.
	The test is solved until $t_{\texttt{final}}=1$.
	In the incompressible limit, this test has an analytical solution given by $v = \frac{1}{10}\text{erf}\left(\frac{x}{2\sqrt{\mu t}}\right)$.
	
	Figure~\ref{fig:1d_stokes} presents the numerical solutions for the three viscosity values $\mu=10^{-2}$, $10^{-3}$, and $10^{-4}$ using $100$ cells.
	The results agree well with the reference solutions without oscillations.
	
	\begin{figure}[htb!]
		\centering
		\includegraphics[width=0.32\textwidth]{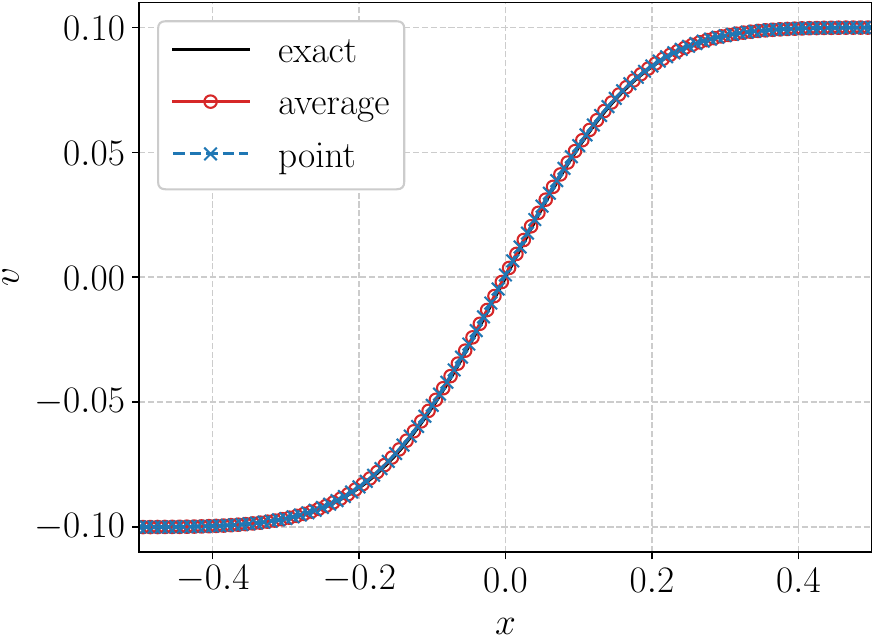}~
		\includegraphics[width=0.32\textwidth]{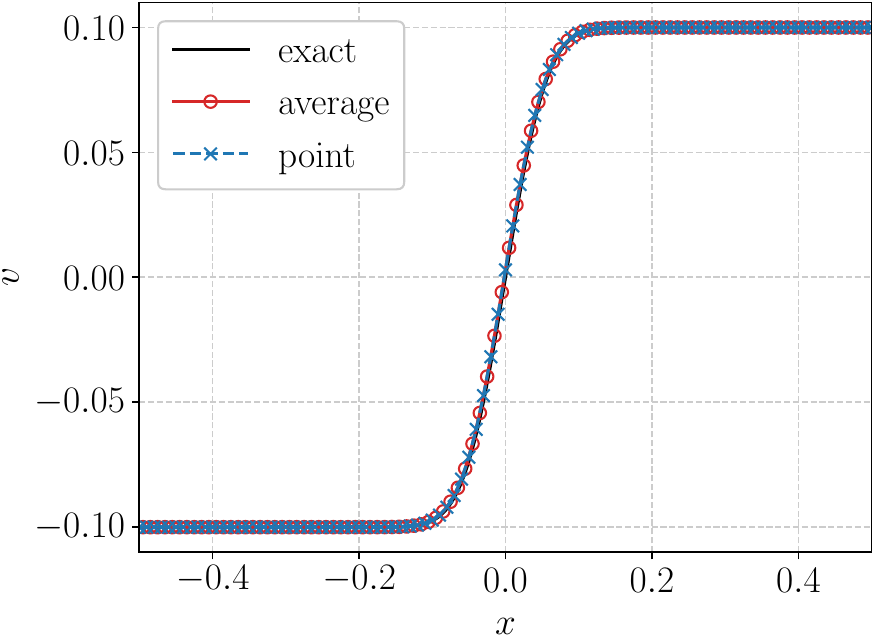}~
		\includegraphics[width=0.32\textwidth]{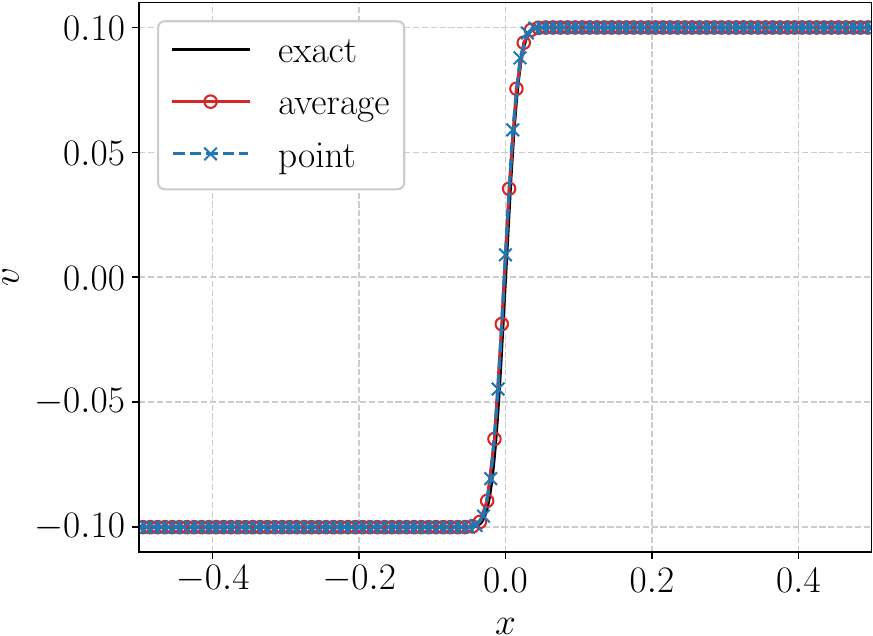}
		\caption{Example~\ref{ex:1d_stokes}: the first problem of Stokes.
			The tangential velocity obtained by the AF method with $\mu=10^{-2}$, $10^{-3}$, $10^{-4}$ (from left to right).}
		\label{fig:1d_stokes}
	\end{figure}
\end{example}

\begin{example}[One-dimensional LeBlanc problem]\label{ex:1d_leblanc}
	We consider the one-dimensional LeBlanc problem using the initial data
	associated with the test in \cite{Upperman_2022_Positivity_JCP}.
	On the domain $[-0.5,0.5]$, the initial condition is
	\begin{equation*}
		(\rho,u,p)(x,0)=
		\begin{cases}
			(1,0,6.666667\times10^{-2}),&x<-0.2,\\
			(10^{-2},0,6.666667\times10^{-11}),&x\geq-0.2.
		\end{cases}
	\end{equation*}
	We consider the inviscid case $\mu=0$ and the viscous case $\mu=10^{-5}$ with $\Pr=0.75$.
	Both cases use outflow boundary conditions and are evolved to $t_{\mathrm{final}}=0.4$.
	
	The numerical solutions obtained with $800$ cells are shown in Figure~\ref{fig:1d_leblanc}.
	The inviscid results are compared with the exact Euler solution, and the viscous results are compared with an AF reference solution computed using $2000$ cells.
	Viscosity smooths the discontinuities.
	A small density undershoot appears near the tail of the rarefaction in the inviscid case;
	otherwise, the numerical solutions agree well with the reference solutions.
	
	\begin{figure}[htb!]
		\centering
		\includegraphics[width=0.48\textwidth]{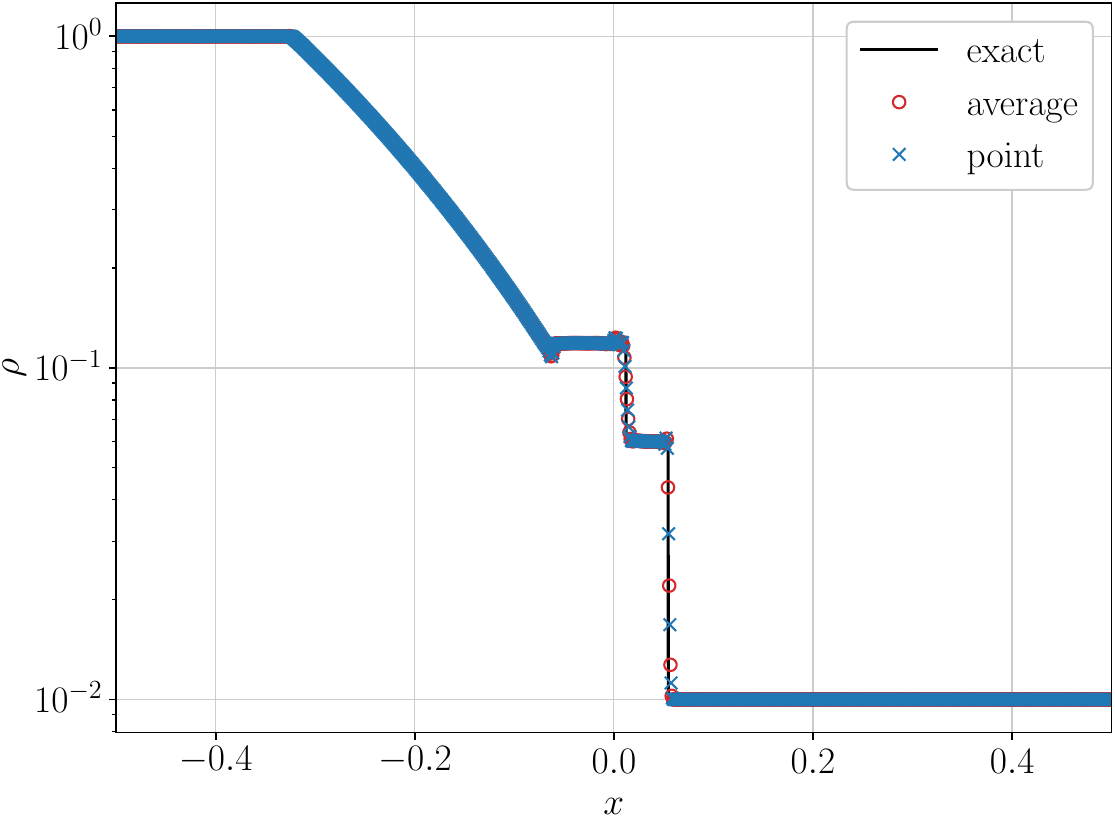}~
		\includegraphics[width=0.475\textwidth]{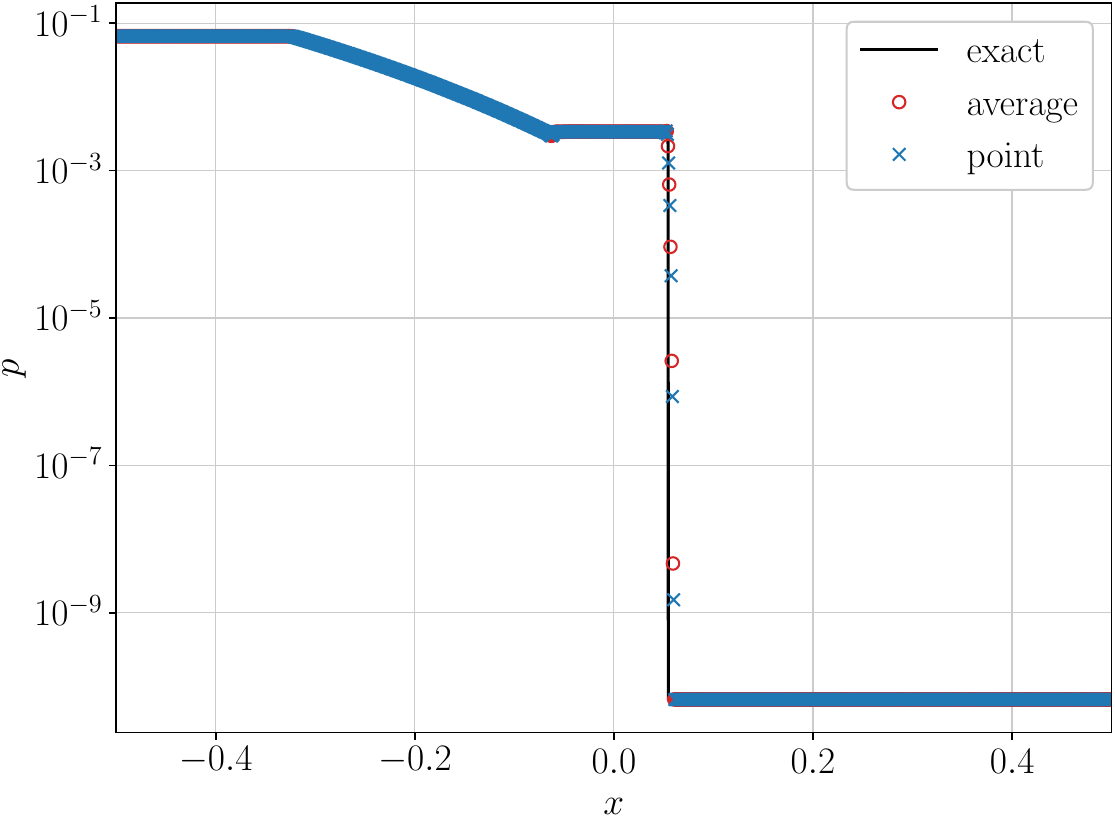}
		
		\vspace{2pt}
		
		\includegraphics[width=0.48\textwidth]{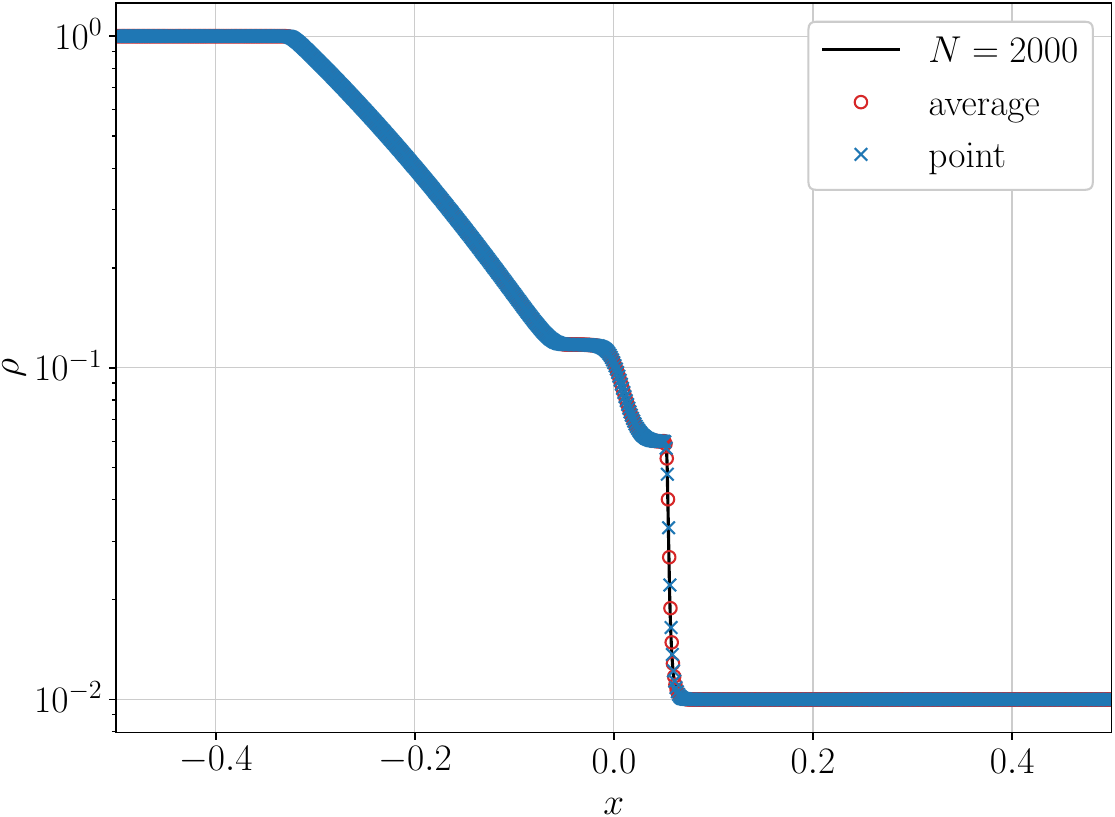}~
		\includegraphics[width=0.485\textwidth]{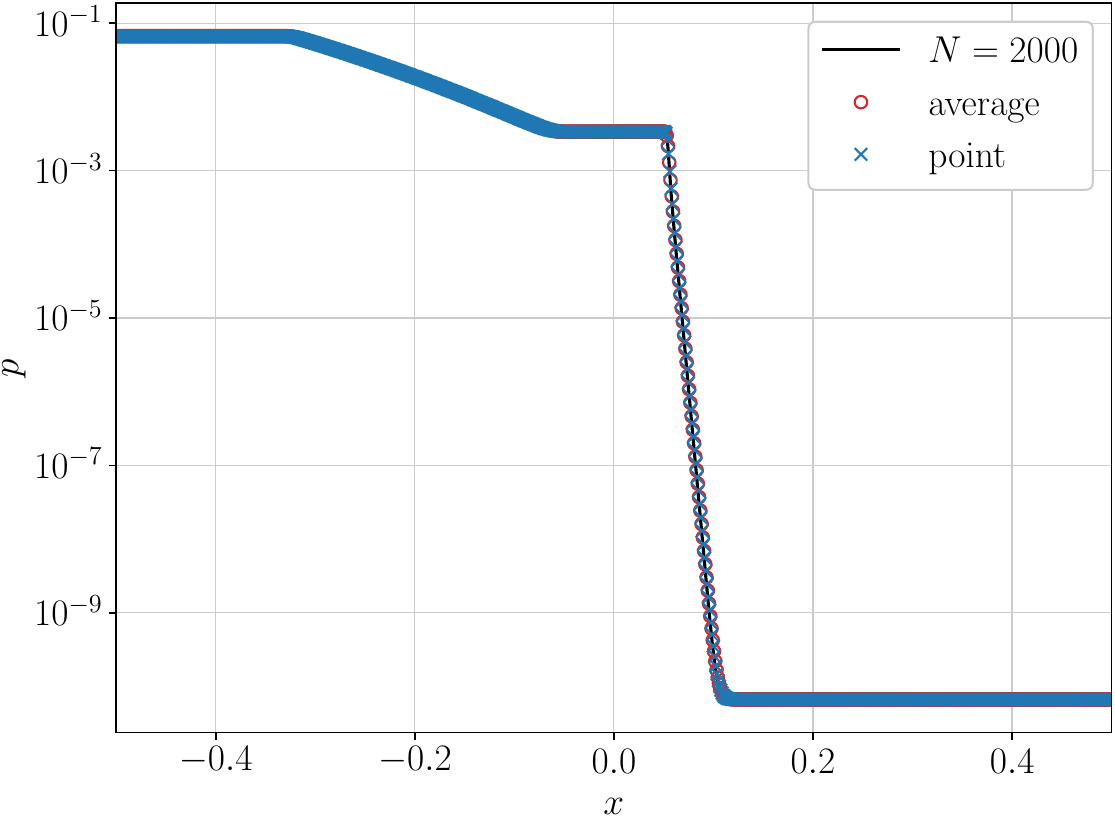}
		\caption{Example~\ref{ex:1d_leblanc}: LeBlanc test.
			The numerical solutions obtained with $800$ cells for $\mu=0$ (upper) and $\mu=10^{-5}$ (lower).}
		\label{fig:1d_leblanc}
	\end{figure}
\end{example}

\begin{example}[Shock diffraction]\label{ex:shock_diffraction}
	Consider the diffraction of a Mach $5.09$ shock around a corner, following \cite{Zhang_2017_positivity_JCP}.
	The computational domain is $[0,13]\times[0,11]$ with the solid region $[0,1]\times[0,6]$ excluded.
	Slip adiabatic wall conditions are imposed on the exposed solid boundaries.
	The post-shock state is prescribed at the left inlet for $y\geqslant6$, and outflow conditions are imposed on the remaining external boundaries.
	Initially, the right-moving shock is located at $x=0.5$.
	The initial condition is
	\begin{equation*}
		(\rho,u,v,p)=
		\begin{cases}
			(1.4,0,0,1), &\text{if}~x > 0.5, \\
			(7.041132907,4.077946955,0,30.05945), &\text{otherwise}. \\
		\end{cases}
	\end{equation*}
	We solve the inviscid Euler equations with $\mu=0$ and the NS equations with $\mathrm{Re}=200$ and $\Pr=0.72$. Both cases are evolved to $t_{\texttt{final}}=2.3$.
	
	The numerical solutions obtained by the proposed AF method are shown in Figure~\ref{fig:shock_diffraction}.
	Our results agree well with those in \cite{Zhang_2017_positivity_JCP}, and the viscosity makes the solution much smoother near the corner.
	
	\begin{figure}[htb!]
		\centering
		\includegraphics[width=0.48\textwidth]{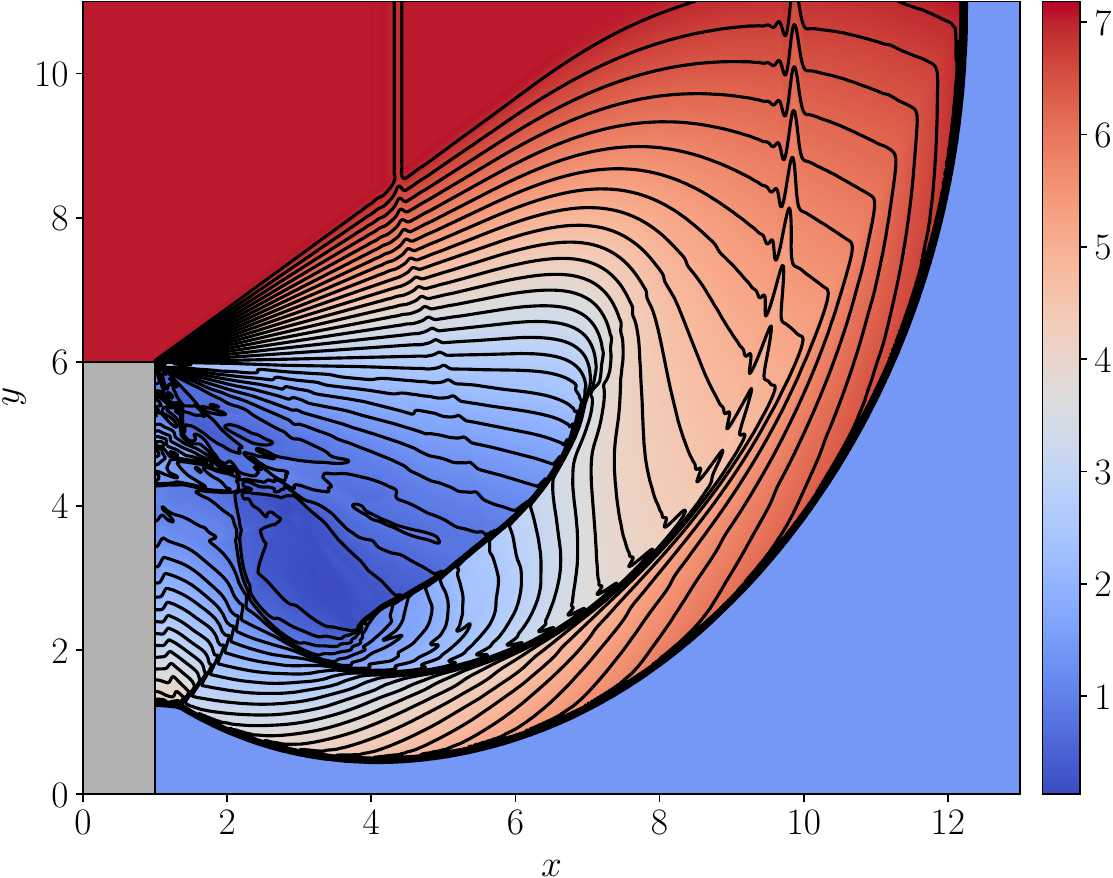}~
		\includegraphics[width=0.48\textwidth]{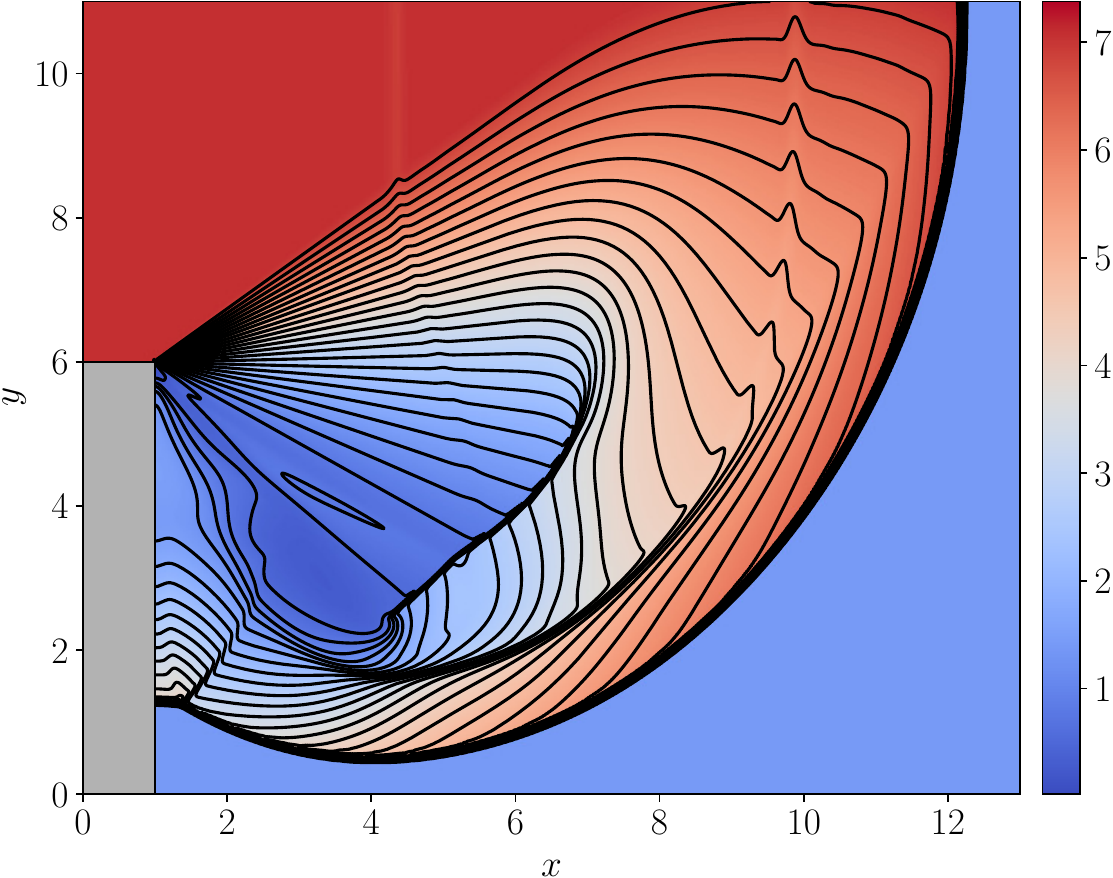}
		\caption{Example~\ref{ex:shock_diffraction}: shock diffraction.
			The density obtained  on a $520\times440$ mesh for $\mu=0$ (left) and $\mathrm{Re}=200$ (right).}
		\label{fig:shock_diffraction}
	\end{figure}
	
\end{example}

\begin{example}[Two-dimensional viscous shock tube with shock--boundary-layer interaction]\label{ex:2d_viscous_shock_tube}
	To examine the interaction of shocks with boundary layers, consider the shock tube problem in \cite{Daru_2001_Evaluation_CF}.
	The computational domain is the lower half of the square tube $[0,1]\times[0,0.5]$,
	with a symmetry boundary condition for the top boundary,
	while no-slip conditions are imposed elsewhere.
	The initial condition is
	\begin{equation*}
		(\rho,u,v,p)=
		\begin{cases}
			(120,0,0,120/\gamma),&\text{if}~ x<0.5,\\
			(1.2,0,0,1.2/\gamma),&\text{otherwise},
		\end{cases}
	\end{equation*}
	and $\mu=10^{-3}$ and $\Pr=0.73$.
	The output time is $t_{\texttt{final}}=1$.
	The reflected shock wave interacts with the boundary layers,
	leading to complex flow patterns consisting of a lambda shock, a main vortex, and other features.
	
	Let $g = |\nabla\rho|$ be the magnitude of the density gradient.
	Figure~\ref{fig:2d_viscous_shock_tube} shows the indicator $\phi = \exp(-10(g - g_{\min})/(g_{\max} - g_{\min}))$,
	obtained by the AF method and the $Q^3$ entropy stable DG method implemented in the Julia code of \cite{Lin_2023_positivity_JCP}.
	For the DG results shown here, additional shock capturing is disabled, while positivity limiting is retained.
	Smaller $\phi$ (red) indicates a larger normalized density gradient, while larger $\phi$ (blue) indicates a smaller one.
	It is observed that the proposed AF method can capture the shock waves, vortices, and other small features with good resolution.
	When using the same total number of DoFs, the AF method generally captures the flow structures better than the DG method.
	For example, comparing the results obtained by the AF method with $800\times400$ cells and by the DG method with $400\times200$ cells, the former captures the shock waves emerging from the lambda shock better,
	and also the structures between the wedge-shaped region located in $[0.5,0.7]\times[0,0.05]$.
	The results obtained by the AF method also agree better with the reference solution provided in \cite{Zhou_2018_Grid_PF},
	which is obtained by a second-order gas kinetic scheme on a $5000\times2500$ mesh.
	
	The blending parameter of the proposed limiting for the AF is shown in Figure~\ref{fig:2d_viscous_shock_tube},
	i.e., $1 - \theta^{\texttt{pp}}\theta^{\texttt{s}}$.
	The plot indicates that the limiting is mainly activated near strong discontinuities, thus the AF method can give high-resolution solutions.
	
	\begin{figure}[htb!]
		\centering
		\begin{subfigure}{0.49\textwidth}
			\includegraphics[width=\textwidth]{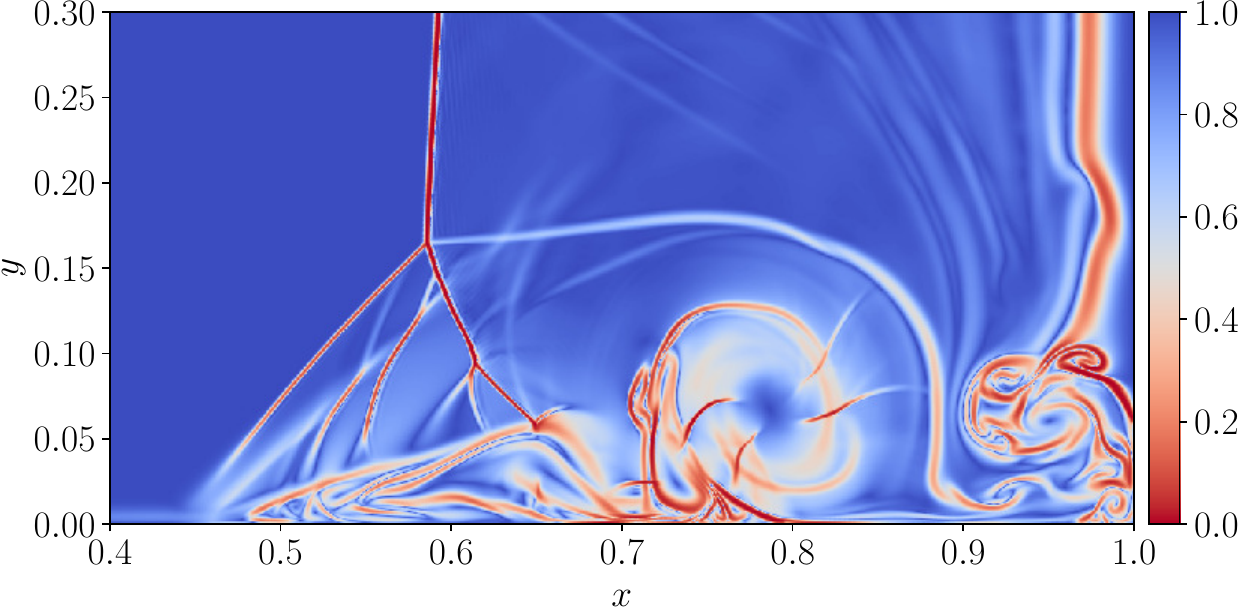}
			\caption{$\phi$ obtained by the AF with $800\times400$ cells}
		\end{subfigure}~
		\begin{subfigure}{0.49\textwidth}
			\includegraphics[width=\textwidth]{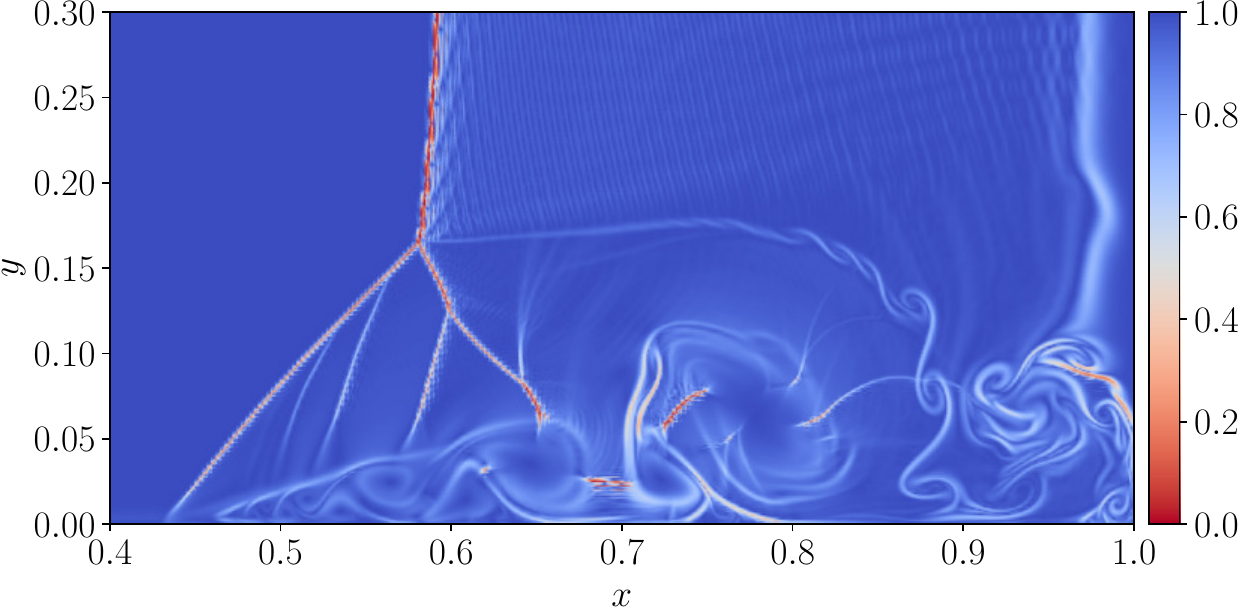}
			\caption{$\phi$ obtained by the DG with $400\times200$ cells}
		\end{subfigure}
		
		\vspace{5pt}
		
		\begin{subfigure}{0.49\textwidth}
			\includegraphics[width=\textwidth]{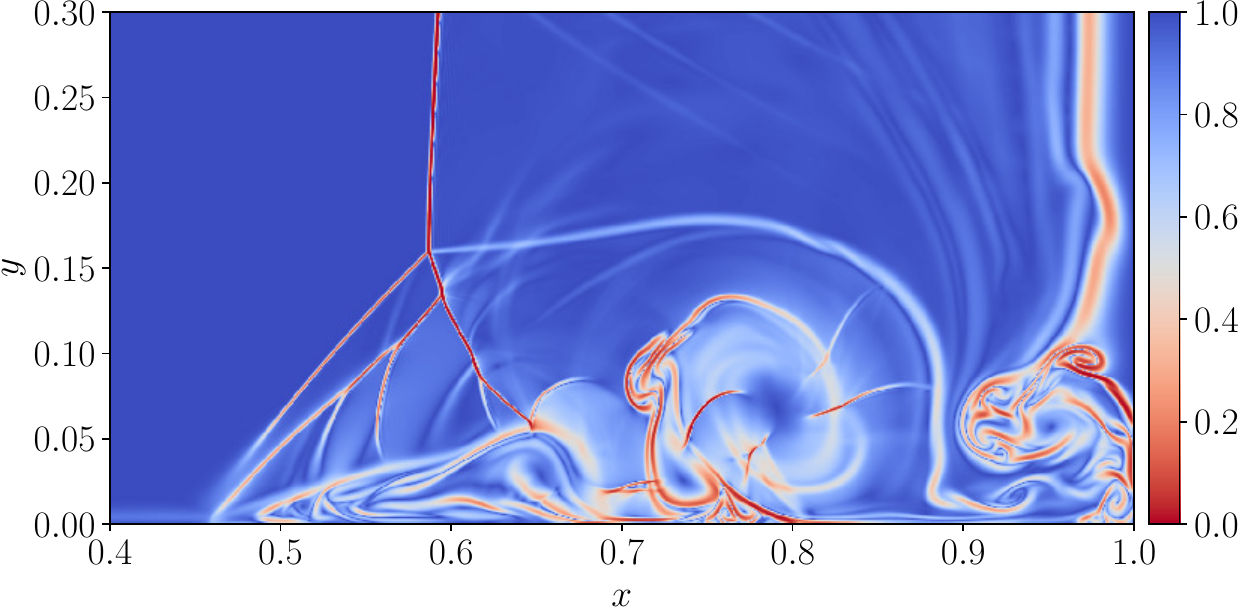}
			\caption{$\phi$ obtained by the AF with $1200\times600$ cells}
		\end{subfigure}~
		\begin{subfigure}{0.49\textwidth}
			\includegraphics[width=\textwidth]{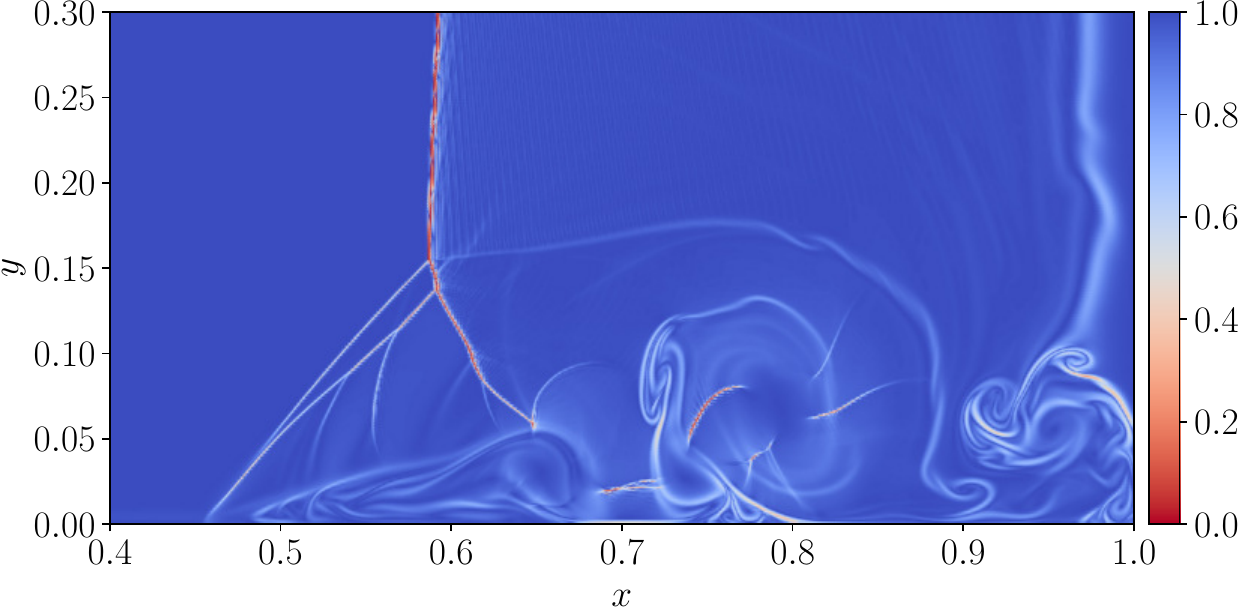}
			\caption{$\phi$ obtained by the DG with $600\times300$ cells}
		\end{subfigure}
		
		\vspace{5pt}
		
		\begin{subfigure}{0.49\textwidth}
			\includegraphics[width=\textwidth]{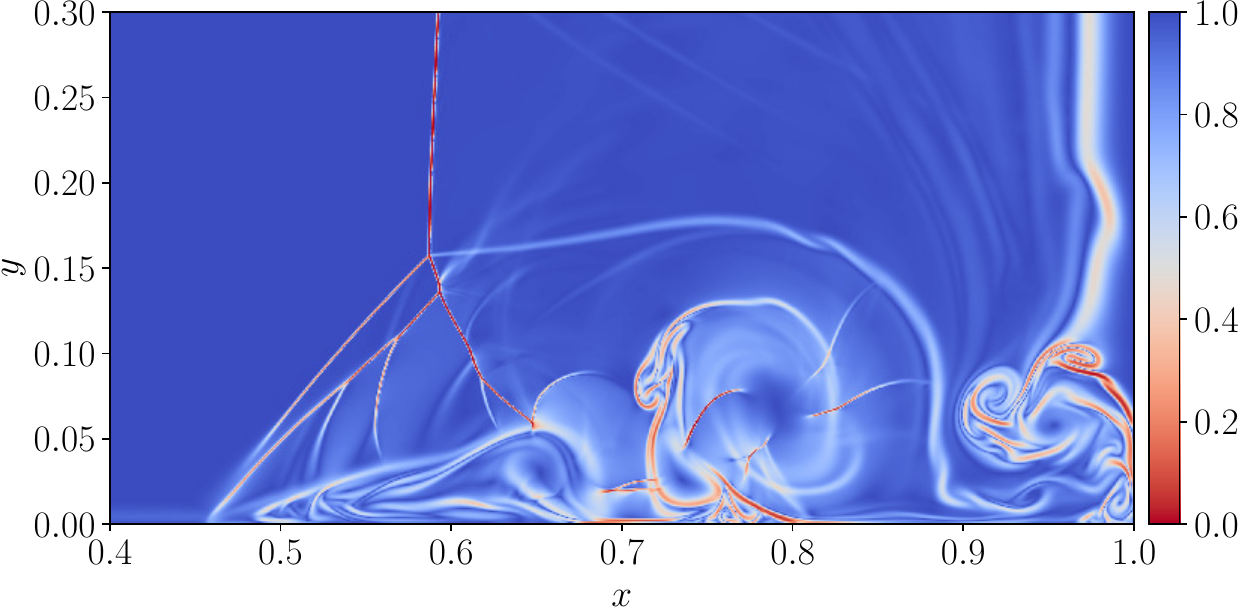}
			\caption{$\phi$ obtained by the AF with $2000\times1000$ cells}
			\label{fig:AF4_fine_mesh}
		\end{subfigure}~
		\begin{subfigure}{0.49\textwidth}
			\includegraphics[width=\textwidth]{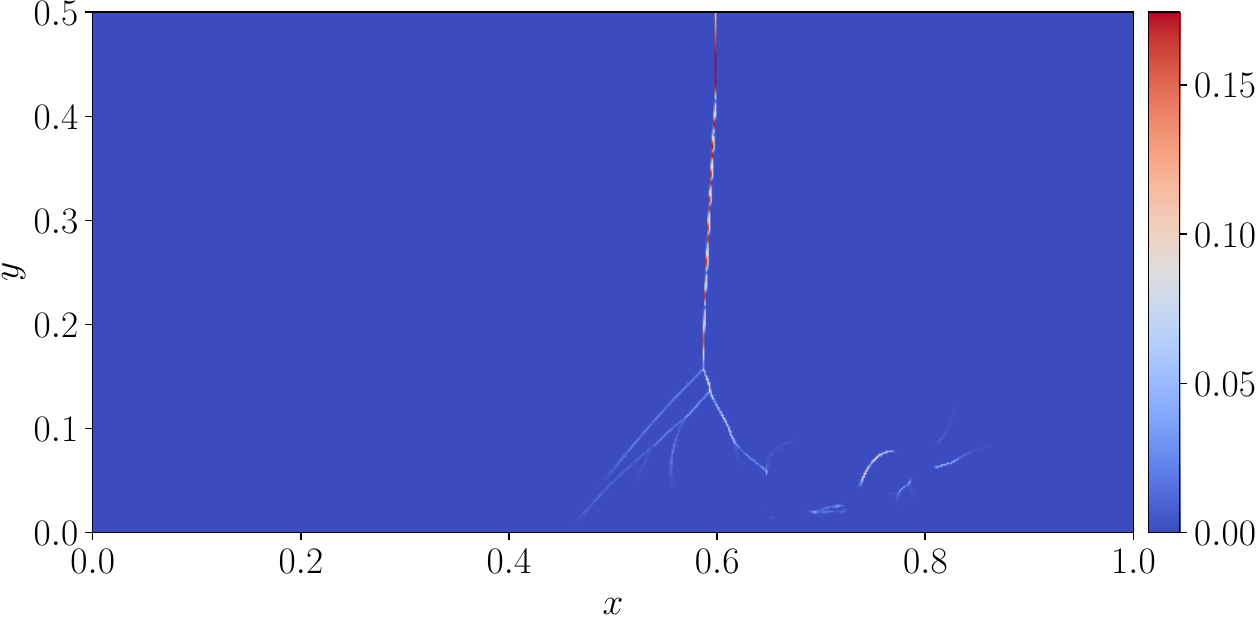}
			\caption{The blending parameter with $2000\times1000$ cells}
		\end{subfigure}
		\caption{Example~\ref{ex:2d_viscous_shock_tube}:  two-dimensional viscous shock tube with shock--boundary-layer interaction with $\mathrm{Re}=1000$.
			All panels correspond to $t_{\texttt{final}}=1$.
			Panels (a)--(e) show a close-up of the region $[0.4,1]\times[0,0.3]$, including the lambda shock, vortices, and shock interaction with the bottom boundary layer, especially the structures in $[0.5,0.7]\times[0,0.05]$.
			Panel (f) shows the limiting activity over the full computational domain $[0,1]\times[0,0.5]$.
		}
		\label{fig:2d_viscous_shock_tube}
	\end{figure}
	
	For a quantitative comparison, the wall density profiles are also presented in Figure~\ref{fig:2d_viscous_shock_tube_wall_density} with the reference data in \cite{Zhou_2018_Grid_PF}.
	The wall density converges with mesh refinement,
	and the AF method matches the reference solution better with the same number of DoFs.
	
	\begin{figure}[htb!]
		\centering
		\includegraphics[width=0.48\textwidth]{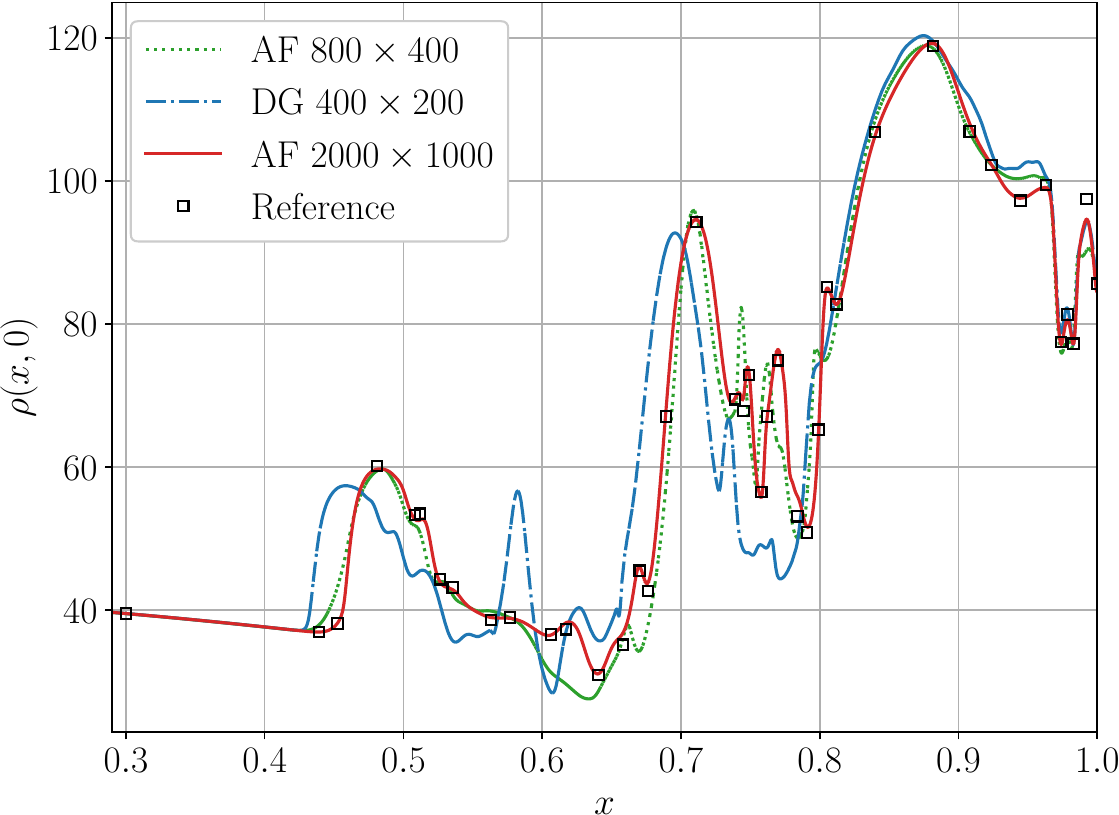}~
		\includegraphics[width=0.48\textwidth]{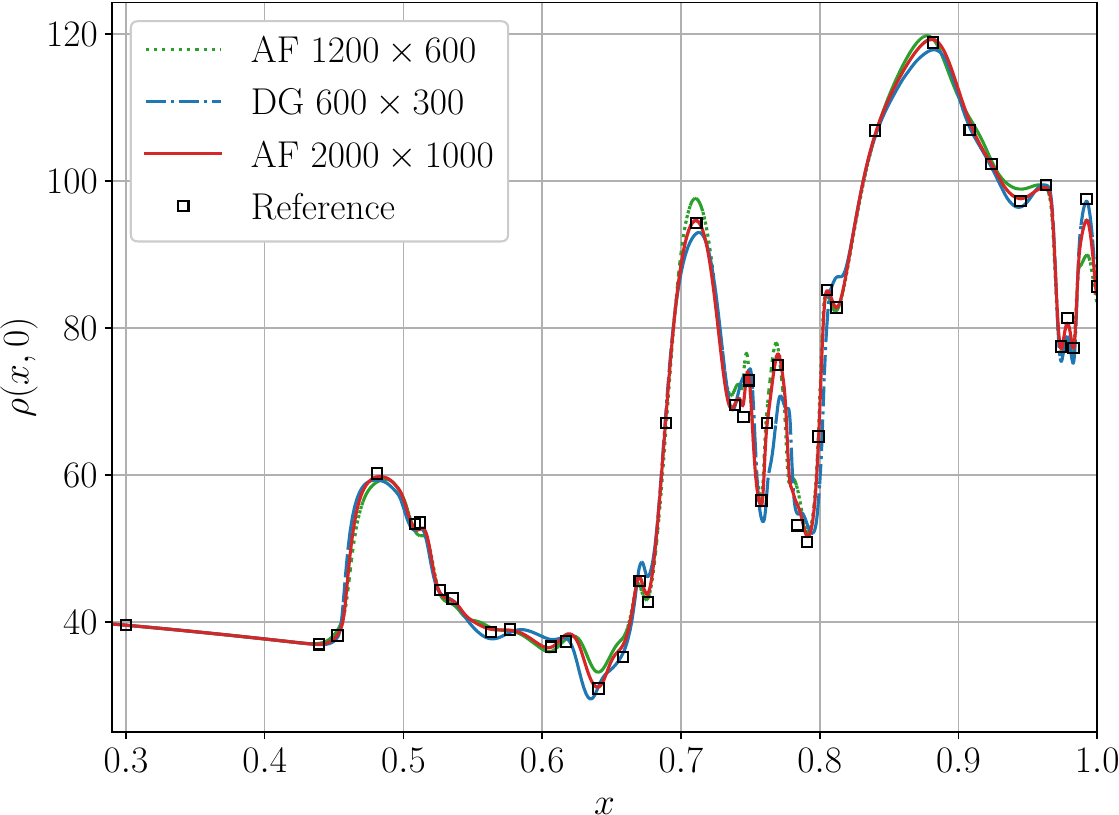}
		\caption{Example~\ref{ex:2d_viscous_shock_tube}: viscous shock tube with $\mathrm{Re}=1000$.
			The density profile on the bottom wall $\rho(x,0)$ compared with the reference data.}
		\label{fig:2d_viscous_shock_tube_wall_density}
	\end{figure}
	
	To see the advantage of the AF method in terms of efficiency,
	the errors of the wall density in the $\ell_1$ norm versus wall-clock time are shown in Figure~\ref{fig:2d_viscous_shock_tube_wall_density_error_time_comparison}.
	Both methods are run using a Mac Studio with M3 Ultra chip with $24$ cores.
	It is clearly seen that the AF method is more efficient with the same number of DoFs,
	and gives better resolution with the same wall-clock time.
	
	\begin{figure}[htb!]
		\centering
		\includegraphics[width=0.48\textwidth]{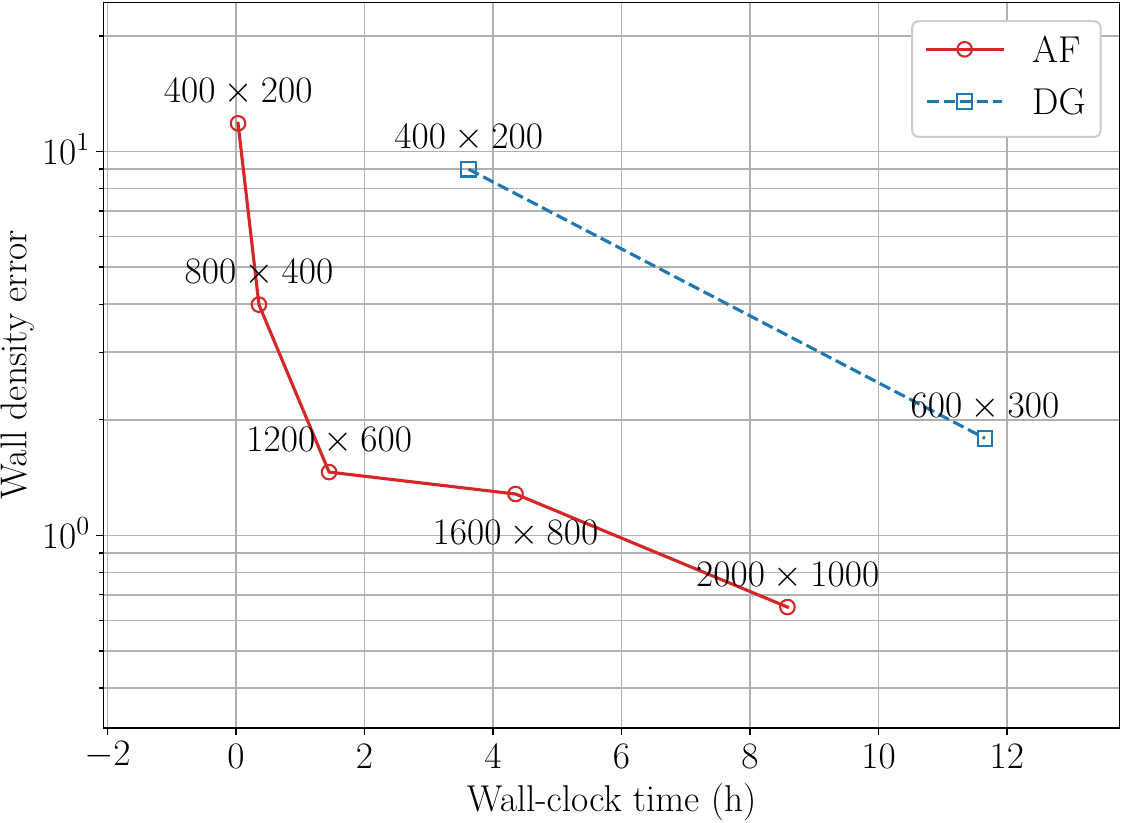}
		\caption{Example~\ref{ex:2d_viscous_shock_tube}: two-dimensional viscous shock tube with shock--boundary-layer interaction with $\mathrm{Re}=1000$.
			The error of the wall density in the $\ell_1$ norm versus the wall-clock time.}
		\label{fig:2d_viscous_shock_tube_wall_density_error_time_comparison}
	\end{figure}
	
	%	\begin{table}[htbp]
		%		\centering
		%		\begin{tabular}{ccr}
			%			\toprule
			%			method & mesh & wall-clock time \\
			%			\midrule
			%			$Q^3$ DG & $400\times200$   & 3h 39min  \\
			%			$Q^3$ DG & $600\times300$   & 11h 01min \\
			%			\midrule
			%			AF      & $800\times400$   & 0h 21min  \\
			%			AF      & $1200\times600$  & 1h 27min  \\
			%			AF      & $2000\times1000$ & 8h 35min  \\
			%			\bottomrule
			%		\end{tabular}
		%		\caption{Example~\ref{ex:2d_viscous_shock_tube}: wall-clock times for the viscous shock tube at $t=1$.}
		%		\label{tab:2d_shock_tube_wall_time}
		%	\end{table}
	
\end{example}

The last two examples consider nearly incompressible viscous flows,
and heat conduction is neglected by setting $\kappa=0$.

\begin{example}[Lid-driven cavity]\label{ex:2d_cavity}
	This classical benchmark describes flow in a square cavity $[0,1]\times[0,1]$ driven by a lid moving with velocity $(1,0)$.
	The initial condition is $(\rho,u,v,p)(x,y,0)=(1,0,0,10^4/\gamma)$,
	and no-slip adiabatic conditions are imposed on all walls. The top lid moves with velocity $(1,0)$, while the other three walls are stationary.
	The reference Mach number is $\mathrm{Ma}=0.01$ so this test falls in the incompressible limit.
	We consider Reynolds numbers $\mathrm{Re}=100$ and $1000$.
	The test is run until $t_{\texttt{final}}=40$ with a $128\times128$ mesh.
	
	The horizontal velocity and streamlines are shown in Figure~\ref{fig:2d_cavity},
	together with the velocity cut-lines along $x=0.5$ and $y=0.5$.
	It is observed that the main recirculation and secondary corner vortices are well captured by the proposed AF method,
	which are comparable to the results in \cite{Tavelli_2017_pressure_JCP},
	and the velocity profiles match the reference solutions given in \cite{Ghia_1982_High_JCP}.
	
	\begin{figure}[htb!]
		\centering
		\includegraphics[width=0.31\textwidth]{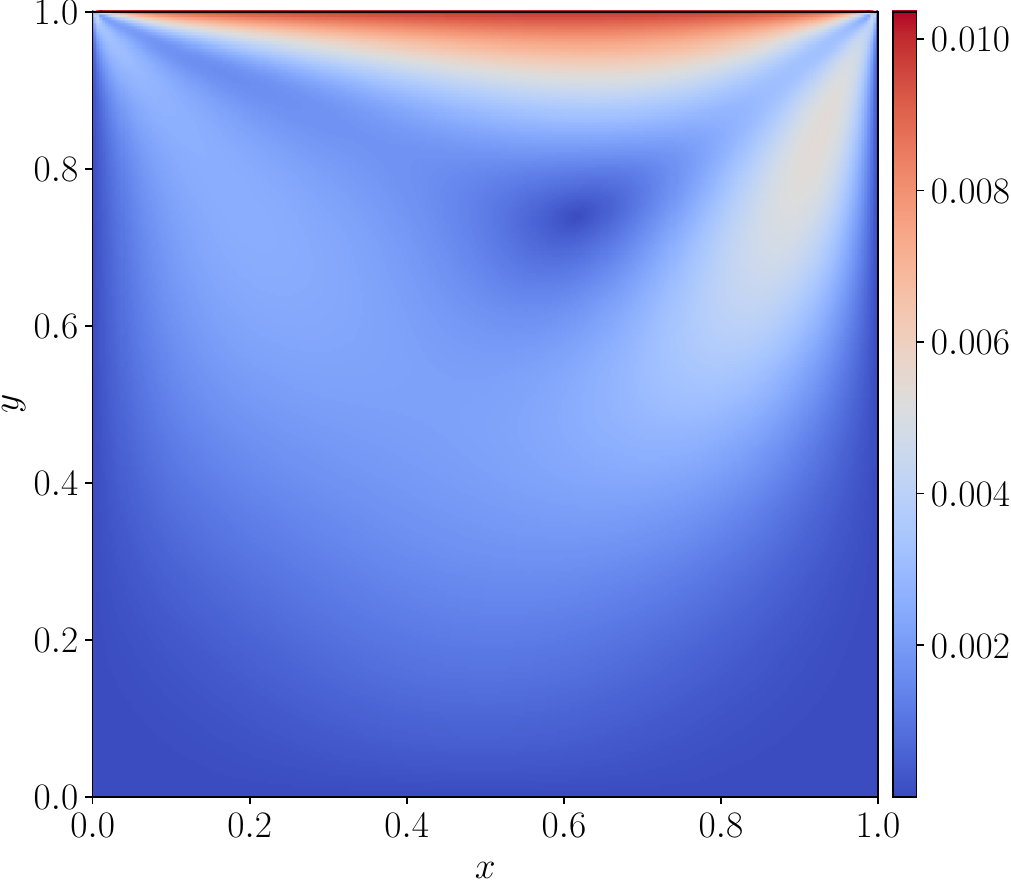}\hfill
		\includegraphics[width=0.31\textwidth]{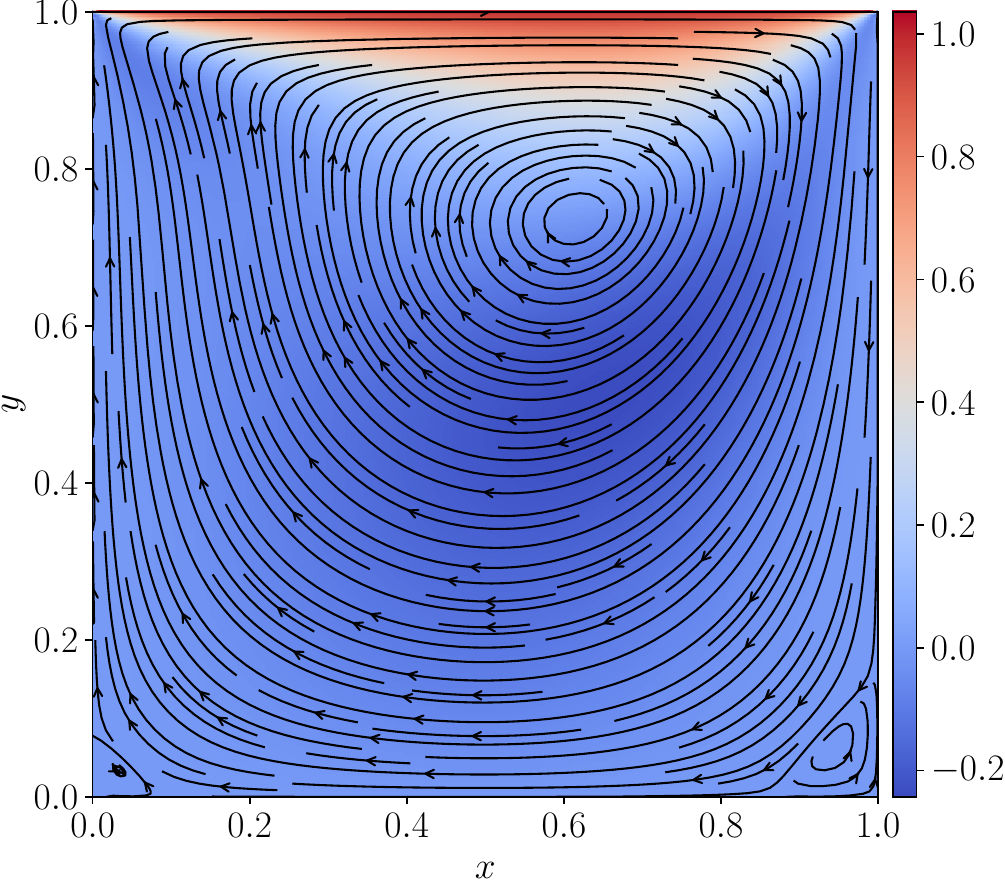}\hfill
		\includegraphics[width=0.355\textwidth]{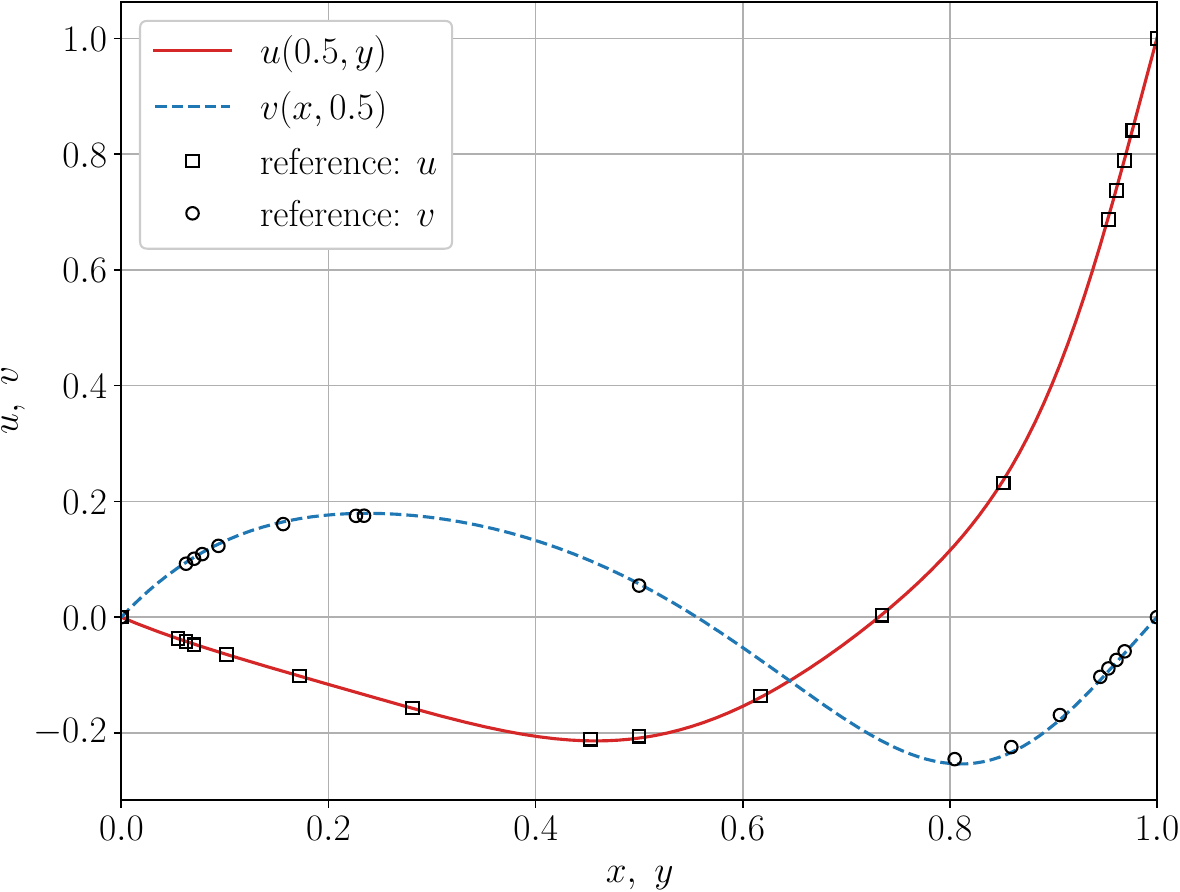}\hfill
		
		\vspace{2pt}
		
		\includegraphics[width=0.31\textwidth]{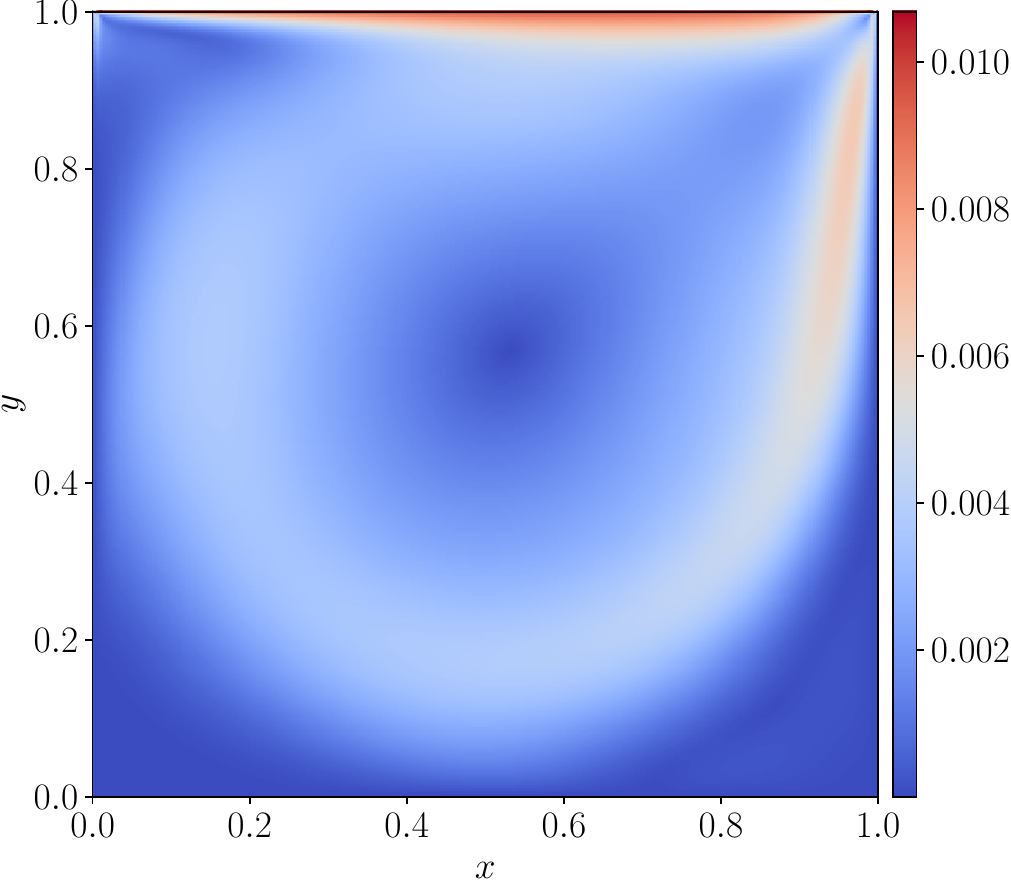}\hfill
		\includegraphics[width=0.31\textwidth]{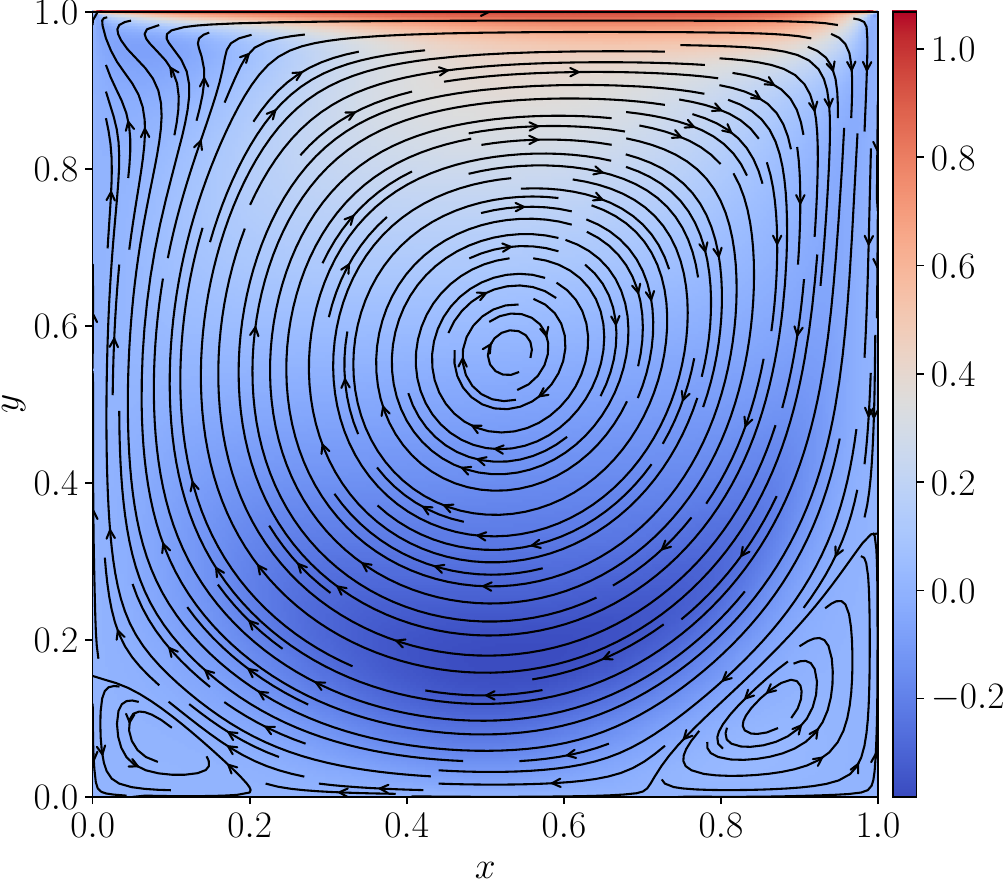}\hfill
		\includegraphics[width=0.355\textwidth]{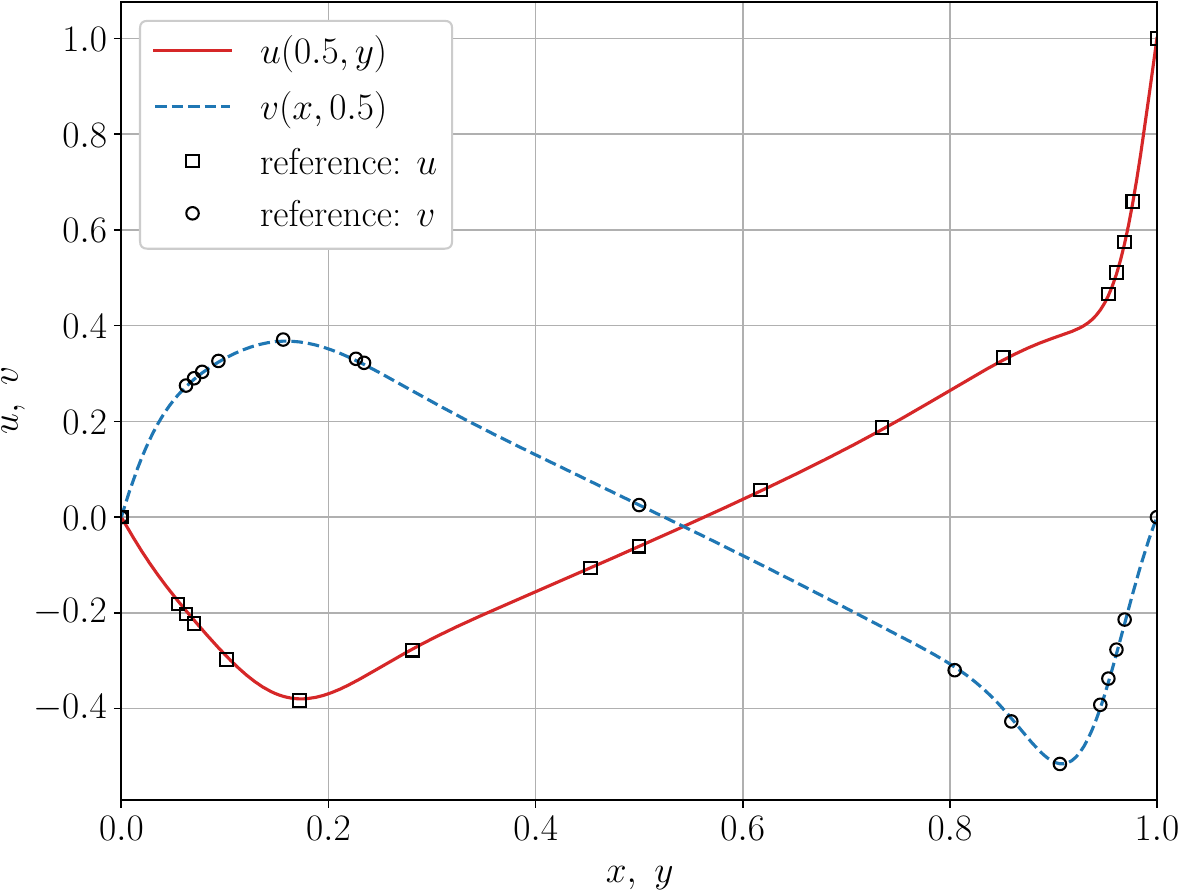}\hfill
		\caption{Lid-driven cavity Example~\ref{ex:2d_cavity} with $\mathrm{Re}=100$ (top) and $1000$ (bottom).
			Left: Mach number, middle: horizontal velocity and streamlines, right: $u(0.5,y)$ and $v(x,0.5)$ compared with the reference solutions.}
		\label{fig:2d_cavity}
	\end{figure}
	
\end{example}

\begin{example}[Double shear layer]\label{ex:2d_double_shear_layer}
	This test considers the evolution of thin shear layers at low Mach numbers,
	following the setup in \cite{Boscheri_2025_all_CMAME}.
	The computational domain is $[0,1]\times[0,1]$ with periodic boundary conditions.
	The initial density and	pressure are $1$ and $10^4/\gamma$, and the velocity is
	\begin{equation*}
		\begin{aligned}
			u&=
			\begin{cases}
				\tanh(30(y-0.25)), &\text{if}~ y\leqslant 0.5,\\
				\tanh(30(0.75-y)), &\text{otherwise},
			\end{cases}\\
			v&=0.05\sin(2\pi x).
		\end{aligned}
	\end{equation*}
	The dynamic viscosity is taken as $\mu=2\times10^{-4}$ and the test is solved until $t_{\texttt{final}}=1.8$.
	The Mach number is $\mathrm{Ma}=0.01$ and the Reynolds number is $\mathrm{Re}=10000$.
	
	The vorticity $\partial_xv-\partial_yu$ at $t=0.4$, $0.8$, $1.2$, $1.8$ obtained with $200\times200$ cells is shown in Figure~\ref{fig:2d_double_shear_layer},
	which agrees well with the results in \cite{Boscheri_2025_all_CMAME},
	thus the proposed AF method seems to work well in the low-Mach regime.
	
	\begin{figure}[htb!]
		\centering
		\includegraphics[width=0.48\textwidth]{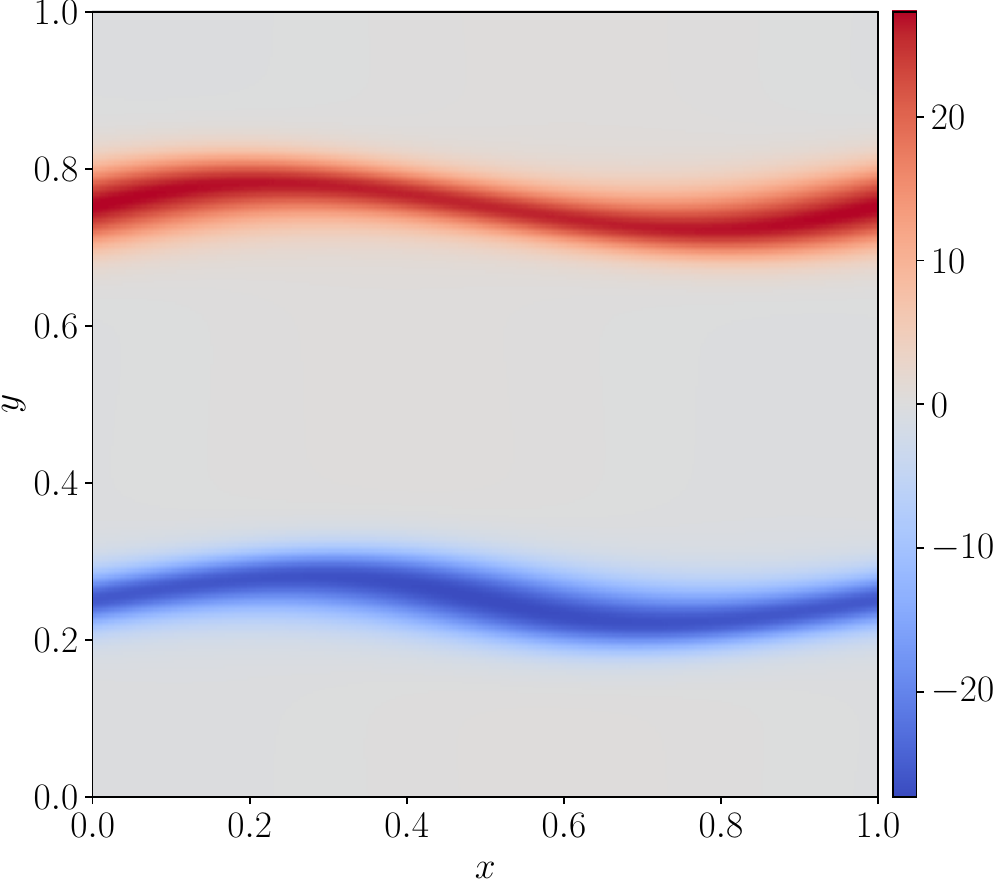}\hfill
		\includegraphics[width=0.48\textwidth]{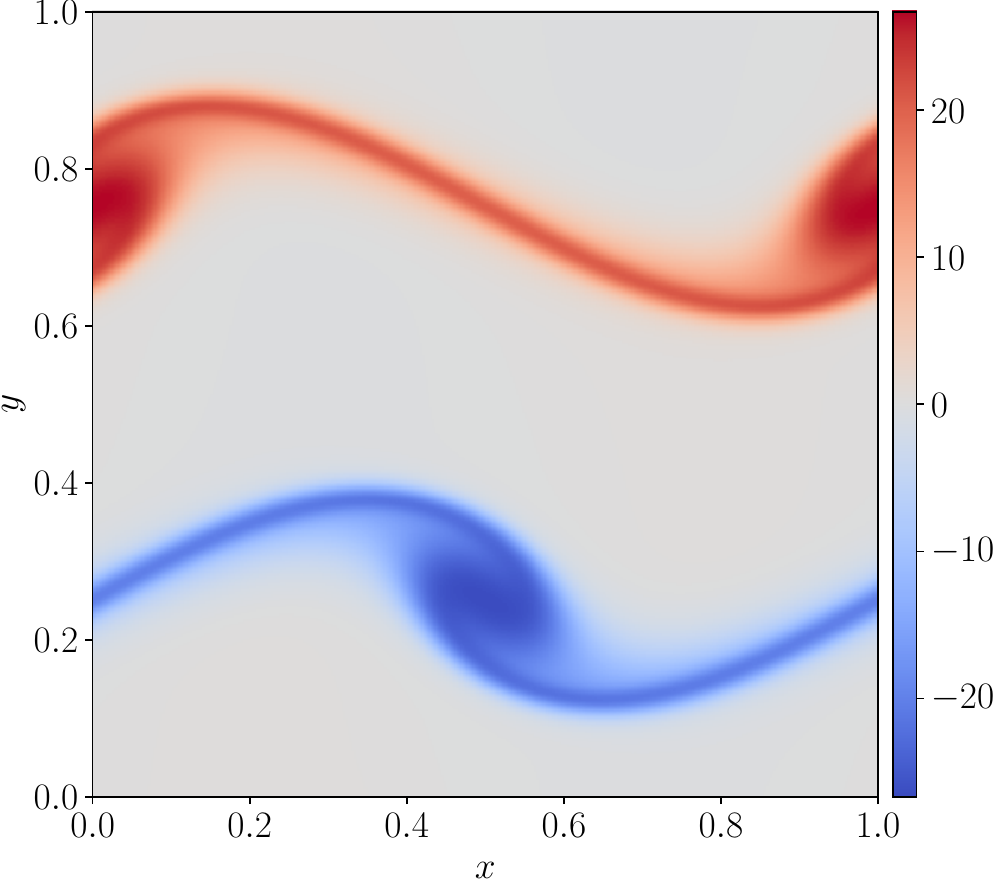}\hfill
		
		\vspace{2pt}
		
		\includegraphics[width=0.48\textwidth]{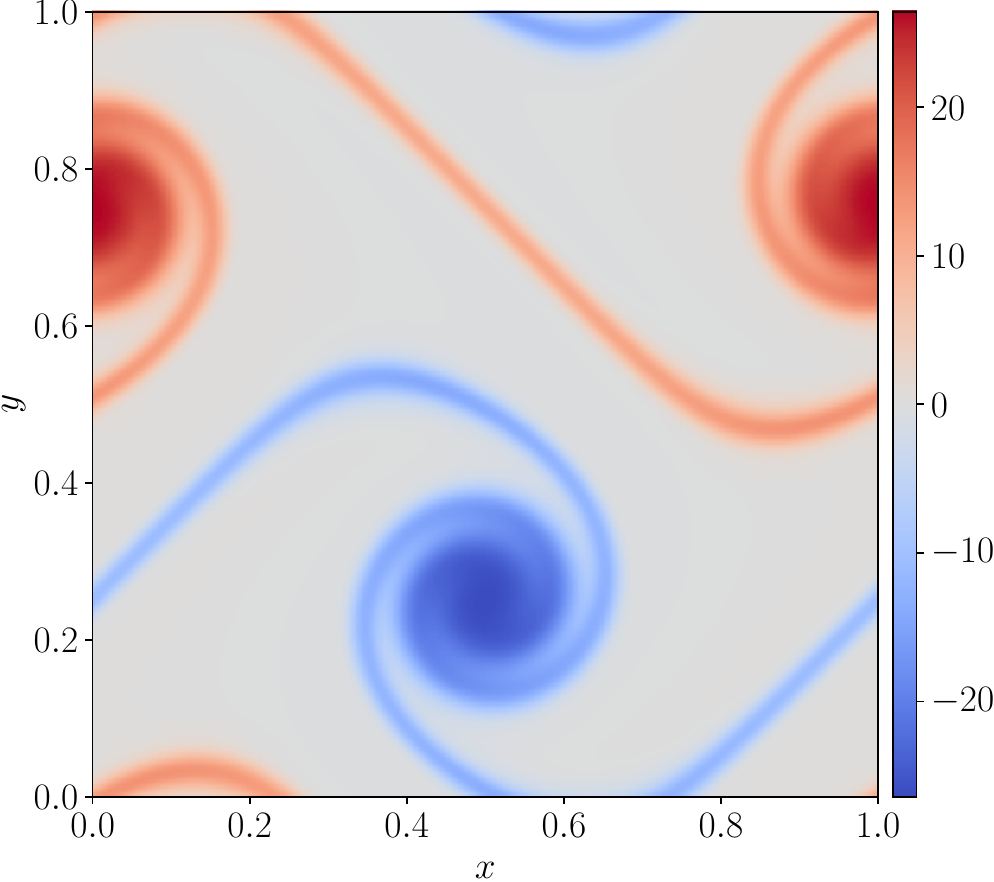}\hfill
		\includegraphics[width=0.48\textwidth]{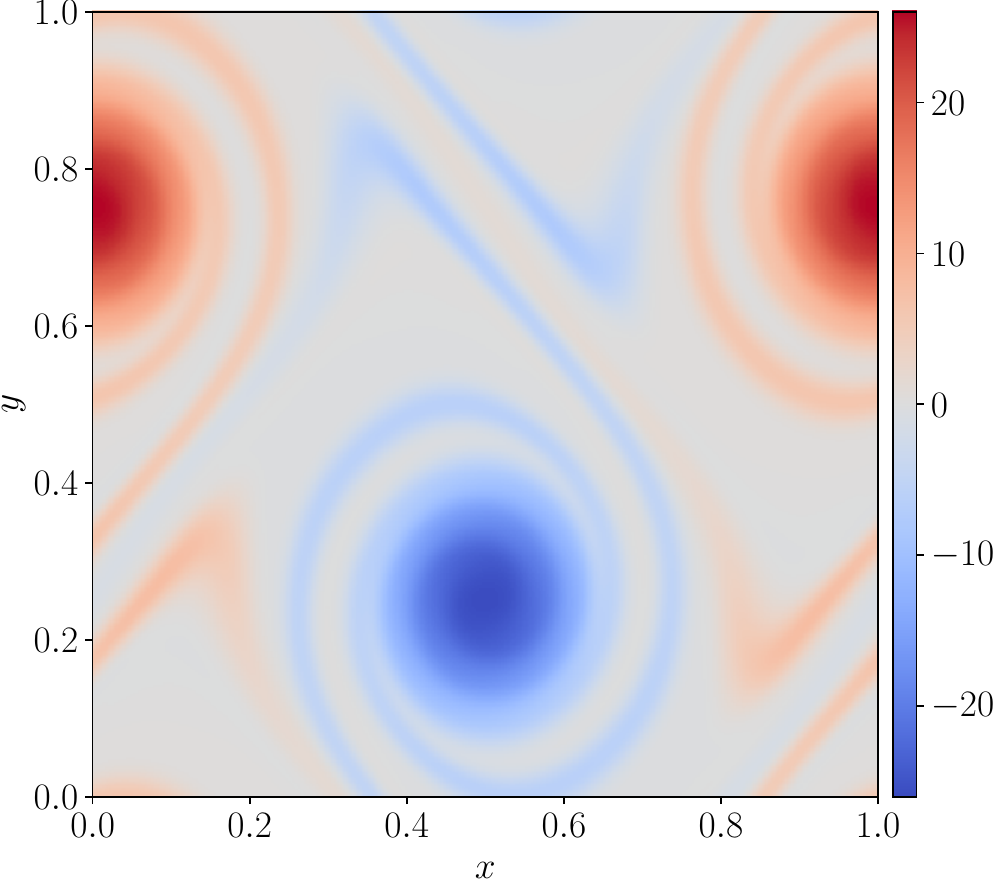}\hfill
		\caption{The vorticity of Example~\ref{ex:2d_double_shear_layer} with $\mathrm{Re}=10000$
			at $t=0.4$, $0.8$, $1.2$, $1.8$ (from top left to bottom right).}
		\label{fig:2d_double_shear_layer}
	\end{figure}
\end{example}

\section{Conclusions}\label{sec:conclusion}

We have developed a compact fourth-order positivity-preserving AF method for the one- and two-dimensional compressible Navier--Stokes equations on Cartesian meshes.
The viscous flux divergence is discretized directly using compact fourth-order operators.
The inviscid discretization achieves fourth-order accuracy while retaining the degrees of freedom of the standard third-order AF method.
The CFL condition is also less restrictive than that of the standard third-order AF method due to the additional downwind point included in the biased stencil.
Monolithic flux limiting treats the inviscid and viscous fluxes jointly in the cell-average update and maintains local conservation.
Together with a scaling limiter for point values, it preserves density and pressure positivity and suppresses nonphysical oscillations near strong discontinuities.
The numerical experiments demonstrate fourth-order convergence for smooth viscous flows and positivity preservation in problems involving strong discontinuities.
In the two-dimensional viscous shock tube with shock--boundary-layer interaction, comparisons with the DG method show that AF attains comparable accuracy with fewer degrees of freedom and less wall-clock time.

\section*{Acknowledgements}
The first author was partially supported by National Natural Science Foundation of China (No. 12671481), 1+1+1 CUHK-CUHK(SZ)-GDSTC Joint Collaboration Fund (No. 2025A0505000079), University Development Fund (No. UDF01004388), and the Alexander von Humboldt Foundation Research Fellowship (No. CHN-1234352-HFST-P).

% Appendix disabled for this version.
%\appendix
%\input{Appendix}

\end{document}